\newcommand{\C}{\mathbb{C}}
\newcommand{\tr}[1]{\mathrm{tr}(#1)}
\newcommand{\xb}{\mathbf{x}}
\newcommand{\yb}{\mathbf{y}}
\newcommand{\zb}{\mathbf{z}}
\newcommand{\pol}{\mathrm{pol}}
\newcommand{\ub}{\mathbf{u}}
\newcommand{\vb}{\mathbf{v}}
\newcommand{\wb}{\mathbf{w}}
\newcommand{\dg}[1]{|\!|#1|\!|}
\newcommand{\tdg}[1]{|\!|#1|\!|_{\mathrm{tr}}}
\newcommand{\aq}{/\!\!/}
\newcommand{\X}{\frak{X}}
\newcommand{\R}{\frak{R}}
\newcommand{\hm}{\mathrm{Hom}}
\newcommand{\SL}{\mathrm{SL}(3,\C)}
\newcommand{\ot}{\mathrm{Out}}
\newcommand{\F}{\mathtt{F}}
\newcommand{\G}{\frak{G}}
\newcommand{\xt}{\mathtt{x}}
\newcommand{\yt}{\mathtt{y}}
\newcommand{\wt}{\mathtt{w}}
\newcommand{\bt}{\mathtt{b}}
\newcommand{\rt}{\mathtt{r}}
\newcommand{\Z}{Z_\rt}
\newcommand{\M}{\mathbf{M}}
\newcommand{\id}{\mathbb{I}}
\newcommand{\D}{\frak{D}}
\newcommand{\ti}[1]{t_{(#1)}}
\newcommand{\dP}[1]{\frac{\partial P}{\partial \ti{#1}}}
\newcommand{\dQ}[1]{\frac{\partial Q}{\partial \ti{#1}}}
\newcommand{\Ad}{\mathrm{Ad}}
\newcommand{\gAd}{\frak{g}_{\Ad_\rho}}
\newcommand{\Cn}[1]{C^{#1}(\F_r;\frak{g}_{\mathrm{Ad}_\rho})}
\newtheorem{theorem}{Theorem}[section]
\newtheorem{lemma}[theorem]{Lemma}
\newtheorem{corollary}[theorem]{Corollary}
\newtheorem{prop}[theorem]{Proposition}
\newtheorem{fact}[theorem]{Fact}
\numberwithin{equation}{section}
\title{$\mathrm{SL}(3,\C)$-Character Varieties and $\mathbb{RP}^2$-Structures\\ on a Trinion}
\author{Sean Lawton}
\date{2006}
\begin{document}

\makefrontmatter

\chapter{Introduction}
The purpose of this paper is to present a self-contained description of a minimal generating set and defining relations for the ring of invariants
$$\C[\X]=\C[\SL\times\SL]^{\SL}.$$  This generating set exhibits symmetries which allow for an explicit and succinct expression of the invariant ring
as a quotient.

Explicit minimal generators were first found by \cite{Du} in $1935$, and later by \cite{SR, MS, T} and graphically by \cite{Si}.  The much more
general results of \cite{AP} additionally provide minimal generators.  However, \cite{N}, and later \cite{ADS} were the
first to explicitly describe the defining relations.  In an unpublished calculation \cite{PX} independently describe the defining relations as well.  
For the state-of-the-art, see \cite{DF}.  Our treatment provides the most succinct and transparent description by uncovering symmetries which provide 
a framework for generalization.

Thereafter, we show $\C[\X]$ is a Poisson algebra and demonstrate the power of our description of $\C[\X]$ by computing the bracket.  Finally, we apply these results to understand 
and describe moduli of convex real projective structures on a sphere with three disks removed.  These applications strongly use results in \cite{G1,G2,G5,Ki}.

\section{Algebraic Structure of $\SL$}

The group $\SL$ has the structure of an algebraic set since it is the zero set of the polynomial $$D=\det\left(
\begin{array}{ccc}
x_{11} & x_{12} & x_{13}\\
x_{21} & x_{22} & x_{23}\\
x_{31} & x_{32} & x_{33}\\
\end{array}\right)-1$$ on $\mathbb{C}^9$.  Here $x_{ij}\in
\C[x_{11},x_{12},x_{13},x_{21},x_{22},x_{23},x_{31},x_{32},x_{33}]$, the polynomial ring over $\C$ in
$9$ indeterminates.  As such denote $\SL$ by $\G$.  The coordinate ring of $\G$ is given by $$\mathbb{C}[\G]=\mathbb{C}[x_{ij}\ | \
1\leq i,j \leq 3]/(D).$$  Since $D$ is irreducible, $(D)$ is a prime ideal.  So the algebraic set $\G$ is in fact an affine variety.

The polynomial $D$ is irreducible, since the determinant of a $2\times 2$ matrix is irreducible by inspection, and $$D=x_{11}\det\left(\begin{array}{cc}   
x_{22} & x_{23}\\
x_{32} & x_{33} \end{array}\right) -x_{21}\det\left(\begin{array}{cc}     
x_{12} & x_{13}\\
x_{32} & x_{33} \end{array}\right) +x_{31}\det\left(\begin{array}{cc}      
x_{12} & x_{13}\\ 
x_{22} & x_{23} \end{array}\right)-1.$$  Thus $D$ cannot be factored in $x_{11},x_{21},x_{31}$ since it is of degree $1$ in those variables, but cannot be factored 
otherwise since the coefficients of those variables are irreducible as well.

Moreover, if the partial derivatives of $D$ are all zero, then $D=-1$ and so the Jacobian ideal has no solutions.  Hence, $\G$ is a non-singular algebraic variety.

\section{Representation $\&$ Character Varieties of a Free Group}\label{repchar}

Let $\F_r$ be the free group of rank $r$ generated by $\{\xt_1,...,\xt_r\}.$  The map $$\hm(\F_r,\G)\longrightarrow
\G^{\times r}$$ defined by sending $$\rho \mapsto (\rho(\xt_1),\rho(\xt_2),...,\rho(\xt_r))$$ is a
bijection.  Since $\G^{\times r}$ is the $r$-fold product of irreducible algebraic sets, $\G^{\times r} \cong \hm(\F_r,\G)$ is an affine
variety.  Moreover, since the product of smooth varieties over $\C$ is smooth, $\hm(\F_r,\G)$ is non-singular.

As such $\hm(\F_r, \G)$ is denoted by $\R$ and referred to as the $\SL$-{\it representation variety of $\F_r$}.

Let $\C[\R]$ be the coordinate ring of $\R$.  Our preceding remarks imply
$\C[\R]\cong \C[\G]^{\otimes r}$.
For $1\leq k\leq r,$ define  a {\it matrix variable} of the complex polynomial ring in $9r$ indeterminates by
$$\xb_k=\left(
\begin{array}{ccc}
x^k_{11} & x^k_{12} & x^k_{13}\\
x^k_{21} & x^k_{22} & x^k_{23}\\
x^k_{31} & x^k_{32} & x^k_{33}\\
\end{array}\right).$$
Let $\Delta$ be the ideal $(\det(\xb_k)-1\ |\ 1\leq k\leq r)$ in $\C[\R]$.  Then
$$\C[\R]=\mathbb{C}[x^k_{ij}\ | \ 1\leq i,j \leq 3,\ 1\leq k\leq r]/\Delta.$$

Let $(\xb_1,\xb_2,...,\xb_r)$ be an $r$-tuple of matrix variables.  An element $f\in \C[\R]$ is a function defined in
terms of such $r$-tuples.  There is a $\G$-action on $\C[\R]$ given by diagonal conjugation.  That is, for $g\in \G$ $$g\cdot
f(\xb_1,\xb_2,...,\xb_r)= f(g^{-1}\xb_1 g,...,g^{-1}\xb_r g).$$
The subring of invariants of this action $\C[\R]^{\G}$ is a finitely
generated $\C$-algebra (see \cite{D,P1,R}).  Consequently, the {\it character variety}
$$\X=\mathrm{Spec}_{max}(\C[\R]^{\G})$$ is the irreducible algebraic set whose coordinate ring is the
ring of invariants.  Therefore, $\C[\X]$ includes all polynomial maps of the form $\tr{\xb_{i_1}\xb_{i_2}\cdots\xb_{i_k}},$ where $1\leq i_j\leq r$.
For $r>1$, the Krull dimension of $\X$ is $8r-8$ since generic elements have zero dimensional isotropy (see \cite{D}, page $98$).  

There is a regular map $\R\stackrel{\pi}{\to}\X$ which factors through $\R/\G$:  let $\frak{m}$ be a maximal ideal
corresponding to a point in $\R$, then the composite isomorphism $\C\to \C[\R]\to \C[\R]/\frak{m}$ implies that the composite map $\C\to
\C[\R]^\G\to \C[\R]^\G/(\frak{m}\cap \C[\R]^\G)$ is an isomorphism as well.  Hence the contraction $\frak{m}\cap \C[\R]^\G$ is maximal, and since for any $g\in \G$, 
$\left(g\frak{m}g^{-1}\right)\cap \C[\R]^\G=\frak{m}\cap \C[\R]^\G$, $\pi$ factors through $\R/\G$ (see \cite{E}, page $38$).  
Although $\R/\G$ is not generally an algebraic set, $\X$ is the categorical quotient $\R\aq\G$, and since $\G$ is a (geometrically) reductive algebraic group $\pi$ is surjective 
and maps closed $\G$-orbits to points (see \cite{D}).

\subsubsection{Completely Reducible Representations}
For every representation $\rho\in \R$, $\C^3$ is a $\F_r$-module induced by $\rho$.  A {\it completely reducible} representation is one that is a direct sum of irreducible 
subrepresentations.  Such representations induce a {\it semi-simple} module structure on $\C^3$, and irreducible representations respectively result in {\it simple} modules.  
For any composition series of the $\F_r$-module associated to $\rho$, $\C^3=V_0\supset V_1\supset\cdots \supset V_{l}=0$, there is a semi-simple $\F_r$-module $W=\bigoplus 
V_{i}/V_{i+1}$.  With respect to a chosen basis of $W$, there exists a completely reducible representation $\rho^{(s)}$.  However, its conjugacy class is independent of any 
basis and moreover the Jordon-H\"{o}lder theorem implies that this class is also independent of the composition series.  

We characterize these representations by their orbits.  If $\rho$ is not completely reducible ($l>1$), 
then it is reducible and so for $\wt\in\F_r$ has the form:  $$\left[\begin{array}{ccc}a(\wt)
& b(\wt)& c(\wt)\\0 &d(\wt) &e(\wt)\\0&f(\wt)&g(\wt)\end{array}\right].$$  In this form, conjugating by $$\left[\begin{array}{ccc}1 & 0& 0\\0 &1/n
&0\\0&0&1/n\end{array}\right],$$ and taking the limit as $n\to\infty$ results in $$\left[\begin{array}{ccc}a(\wt)           
& 0& 0\\0 &d(\wt) &e(\wt)\\0&f(\wt)&g(\wt)\end{array}\right].$$  This limiting representation is $\rho^{(s)}$, if it had two irreducible summands.  Otherwise we may conjugate 
$\rho$ so $f(\wt)$ may be taken to be $0$.  Then conjugating this form of $\rho$ by $$\left[\begin{array}{ccc}1 & 0& 0\\0 &1/n
&0\\0&0&1/n^2\end{array}\right]$$ and taking the limit as $n\to\infty$ results in $$\left[\begin{array}{ccc}a(\wt)           
& 0& 0\\0 &d(\wt) &0\\0&0&g(\wt)\end{array}\right],$$ which is $\rho^{(s)}$ when it has three irreducible summands.  Either way, we have a sequence, $\rho_n\in \G\rho$, beginning 
at $\rho$ and limiting to $\rho^{(s)}\notin \G\rho$.  It follows that if $\G\rho$ is closed then $\rho$ is completely reducible.  

For the converse, we first show that $\pi(\rho)=\pi(\psi)$ if and only if $\rho^{(s)}=\psi^{(s)}$.  Indeed, suppose that $\pi(\rho)=\pi(\psi)$.  Then their characteristic 
polynomials are equal:  $\chi_{\rho}=\chi_{\psi}$.  Thus $\chi_{\rho^{(s)}}=\chi_{\psi^{(s)}}$.  However semi-simple representations are determined by their characteristic 
polynomials, so $\rho^{(s)}=\psi^{(s)}$.  On the other hand, if $\rho^{(s)}=\psi^{(s)}$ then $\rho_n\rightarrow\rho^{(s)}=\psi^{(s)}\leftarrow\psi_n$.  This in turn implies 
$\overline{\G\rho}\cap\overline{\G\psi}$ is not empty, and so $\pi(\rho)=\pi(\psi)$.
  
Now suppose $\rho$ has a non-closed orbit, and let $\psi$ be an element of $\overline{\G\rho}-\G\rho$.  Then $\rho$ and $\psi$ are not conjugate.  Without loss of generality, we 
can assume that $\G\psi$ is closed since the dimension of each subsequent sub-orbit decreases.  So $\psi=\psi^{(s)}$ and $\pi(\psi)=\pi(\rho)$.  Hence, $\rho^{(s)}=\psi$ and so 
$\rho$ cannot be completely reducible else it would be conjugate to $\psi$, which it is not.  In other words, if $\rho$ is completely reducible, then $\G\rho$ is closed.

Let $\R^{ss}$ be the subset of $\R$ containing only completely reducible representations.  Then we have just shown that $\R^{ss}/\G$ is in bijective correspondence (as sets) 
to $\X$, and the following diagram commutes:

\begin{displaymath}
\begin{CD}
\R@>>>\X\\
@VVV      @AAA\\
\R^{ss} @>>> \R^{ss}/G.\\
\end{CD}
\end{displaymath}

For a complete treatment of the above arguments see \cite{A,P2}.

\subsubsection{Simple Representations}

Let $\R^s\subset \R^{ss}$ be the set of irreducible representations, and let $\R^{reg}$ be the regular points in $\R$; that is the representations that have closed orbits
and have minimal dimensional isotropy.  These points form an open dense subset of $\R$ (see \cite{D}).

We claim that $\R^{reg}=\R^s$, if $\F_r$ has rank greater than $1$.  We have already seen that the irreducible representations have closed orbits, since they are 
completely reducible.  So it remains to show that $\rho$ is irreducible if and only if its isotropy has minimal dimension.  First, however, we address the case of $\F_1$.

In this case, all representations have an invariant subspace since the characteristic polynomial always has a root over $\C$.  So there are no irreducible representations, and 
the semi-simple representations are exactly the diagonalizable matrices.  Moreover, the dimension of the isotropy of any representation is at least $2$-dimensional since any 
matrix commutes with itself.  Also the set of matrices with distinct eigenvalues is dense; and any diagonalizable matrix has a repeated eigenvalue if and only if its 
isotropy has dimension greater than $2$.  So in this case, $\R^{reg}$ is the set of matrices with distinct eigenvalues, and $\R^s=\{\emptyset\}$.

Otherwise, $\F_r$ has rank at least $2$.  If $\rho\in \R^{ss}$ has an invariant subspace, it has non-zero dimensional isotropy since it fixes at least one line in $\C^3$.  On 
the other hand, the representations that have at least two distinct matrix variables having no shared eigenspaces have isotropy equal to the center, which is generated by the cubic 
roots of unity and so is zero-dimensional.  If a representation does not have this property then it must be reducible.  Hence, the minimal dimension of isotopy is zero
which is realized if and only if $\rho \in \R^s\subset \R^{ss}$.  Thus when $\F_r$ has $r>1$, then $\R^{reg}=\R^{s}$.  

In \cite{A}, it is shown that $\R^s\aq \G$ is a smooth irreducible variety.  Moreover, in \cite{G2} it is shown that $\G$ acts properly on $\R^s$, and although the action is 
not effective, the kernel is the center $\mathbb{Z}_3$.  Thus the induced ``infinitesimal'' action on the tangent space is in fact effective, since the tangent map 
corresponding to the center is zero.  Thus, if $\rho\in\R^s$, the tangent space to an orbit, $T_{\rho}(\mathcal{O}_{\rho})$, is isomorphic to $\frak{g}$, the Lie 
algebra of $\G$.  Together with properness, this implies that $\R^s\to\R\aq \G$ is a local submersion which in turn implies 
$T_{\rho}(\R\aq \G)\cong T_{\rho}(\R)/T_{\rho}(\mathcal{O}_{\rho})$ whenever $\rho$ is irreducible.  

It is not always the case that the tangent space to the quotient is the quotient of tangent spaces.  See \cite{Ka} for example.  The issue that arises is that there can be smooth 
points in the quotient that have positive-dimensional isotropy.  At these points, $T_{\rho}(\R\aq \G)\not\cong T_{\rho}(\R)/T_{\rho}(\mathcal{O}_{\rho}),$ seen by simply comparing 
dimensions.  On the other hand, it is not clear what happens at the singular points in the quotient since they will necessarily have positive-dimensional isotropy but yet their 
Zariski tangent space will also jump in dimension.  A dimension count is not sufficient however, since it may be the case that the differential to the projection at such a point 
is not surjective.

\chapter{Polynomial Matrix Identities}

Let $\F^{+}_r$ be the free monoid generated by $\{\xt_1,...,\xt_r\}$, and let $\M^+_r$ be the monoid
generated by $\{\xb_1,\xb_2,...,\xb_r\},$ as defined in Chapter $\ref{repchar}$, under matrix multiplication and with identity element $\id$ the $3\times 3$ identity matrix.
There is a surjection $\F^{+}_r\to \M^+_r$, defined by mapping $\xt_i\mapsto \xb_i$.
Let $\mathbf{w} \in \M^+_r$ be the image of $\mathtt{w}\in \F^{+}_r$ under this map.  Further, let $|\cdot|$ be the function
that takes a reduced word in $\F_r$ to its word length.  Then by \cite{P1,R}, we know $\C[\X]$ is not only finitely generated, but in fact
generated by
\begin{equation}\label{generators}
\{\tr{\mathbf{w}}\ |\ \mathtt{w}\in \F^{+}_r, \ |\mathtt{w}|\leq 7\}.
\end{equation}

Let $\xb^*_k$ be the transpose of the matrix of cofactors of $\xb_k$.  In other words, the $(i,j)^{\text{th}}$ entry of
$\xb^*_k$ is $$(-1)^{i+j}\mathrm{Cof}_{ji}(\xb_k);$$ that is, the determinant obtained by
removing the $j^{\text{th}}$ row and $i^{\text{th}}$ column of $\xb_k$.  Let $\M_r^*$ be the monoid generated by $\{\xb_1,\xb_2,...,\xb_r\}$ and 
$\{\xb^*_1,\xb^*_2,...,\xb^*_r\}.$  

Observe that $(\xb\yb)^*=\yb^*\xb^*$ for all $\xb,\yb \in \M^+_r$, and $\xb\xb^*=\det(\xb)\id.$  Now let
$\mathbf{N}_r$ be the normal sub-monoid generated by $$\{\det(\xb_k)\id\ |\ 1\leq k\leq r\},$$ and
subsequently define $\M_r=\M_r^*/\mathbf{N}_r$.  Notice in $\M_r$, $\xb^*=\xb^{-1}$, and thus $\M_r$ is a group.

We will need the structure of an algebra, and to that end let $\C \M_r$ be the group algebra defined over $\C$ with respect to matrix addition
and scalar multiplication in $\M_r$.  Likewise, let $\C\M^*_r$ be the semi-group algebra of the monoid $\M^*_r$.

The following commutative diagram relates these objects:
\begin{displaymath}
\begin{CD}
\F_r^+ @>>>\F_r   @= \F_r\\
@VVV  @.         @VVV\\
\M_r^+ @>>> \M_r^* @>>> \M_r\\
@VVV    @VVV       @VVV\\
\C \M_r^+ @>>>\C \M_r^* @>>> \C \M_r @>\mathrm{tr}>> \C[\X].\\
\end{CD}
\end{displaymath}
Since the trace is non-degenerate, all relations in $\C[\X]$ arise from relations in $\C\M_r$.
\section{Relations}
The Cayley-Hamilton theorem applies to this context and so for any $\xb\in \C \M_r$,
\begin{align} \label{cayham}
\xb^3-\tr{\xb}\xb^2+ \tr{ \xb^* } \xb-\det(\xb)\id=0.
\end{align}   

By direct calculation, or by Newton's trace formulas
\begin{align}
\tr{ \mathbf{x}^* }&=\frac{1}{2}\left(\tr{ \mathbf{x} }^2-\tr{ \mathbf{x}^2 }\right)\label{trinv}.
\end{align}

Together \eqref{cayham} and \eqref{trinv} imply
\begin{align}
\det(\mathbf{x})&=\frac{1}{3}\tr{\xb^3}+\frac{1}{6}\tr{\xb}^3-\frac{1}{2}\tr{\xb}\tr{\xb^2}.\label{dettr}
\end{align}

If $\xb, \yb \in \M_r$ then multiplying equation $\eqref{cayham}$ on the right by $\xb^{-1}\yb$ yields,
\begin{eqnarray}\label{cayham2}
\xb^2\yb-\tr{\xb}\xb \yb +\tr{\xb^{-1}}\yb-\xb^{-1}\yb=0.
\end{eqnarray}
Computations similar to those that follow may be found in \cite{MS, SR}.  For any $\xb, \yb \in \C \M_r$ and any $\lambda \in \C$, equation
$\eqref{cayham}$ implies
\begin{align}\label{cayhamsum}
(\xb+\lambda \yb)^3-\tr{\xb+\lambda \yb}(\xb+\lambda \yb)^2 + \tr{(\xb+\lambda
\yb)^*}(\xb+\lambda\yb)-\det(\xb+\lambda \yb)\id=0.
\end{align}

Using equations $\eqref{cayham}$, $\eqref{trinv}$, and $\eqref{dettr}$, we derive equations
\begin{align}\label{detsum}
\det(\xb+\lambda \yb)=&\lambda^3\left(\frac{1}{3}\tr{\yb^3} +\frac{1}{6}\tr{\yb}^3-\frac{1}{2}\tr{\yb}\tr{\yb^2}\right)+\nonumber\\
&{}\lambda^2\left(\tr{\xb\yb^2}+\frac{1}{2}\tr{\xb}\tr{\yb}^2-\frac{1}{2}\tr{\xb}\tr{\yb^2} -\tr{\yb}\tr{\xb\yb}\right)+\nonumber\\
&{}\lambda^1\left(\tr{\xb^2\yb}+\frac{1}{2}\tr{\yb}\tr{\xb}^2 -\frac{1}{2}\tr{\yb}\tr{\xb^2}-\tr{\xb}\tr{\xb\yb}\right)+\nonumber\\
&{}\lambda^0\left(\frac{1}{3}\tr{\xb^3} +\frac{1}{6}\tr{\xb}^3-\frac{1}{2}\tr{\xb}\tr{\xb^2}\right),
\end{align} and
\begin{align}\label{eq:1}
(\xb+\lambda\yb)\tr{(\xb+\lambda\yb)^*}=
&\lambda^3\left(\frac{1}{2}\tr{\yb}^2\yb-\frac{1}{2}\tr{\yb^2}\yb\right)+\nonumber\\
&{}\lambda^2\left(\frac{1}{2}\tr{\yb}^2\xb-\frac{1}{2}\tr{\yb^2}\xb +\tr{\xb}\tr{\yb}\yb-\tr{\xb\yb}\yb\right)+\nonumber\\
&{}\lambda^1\left(\frac{1}{2}\tr{\xb}^2\yb-\frac{1}{2}\tr{\xb^2}\yb+\tr{\xb}\tr{\yb}\xb-\tr{\xb\yb}\xb\right)+\nonumber\\
&{}\lambda^0\left(\frac{1}{2}\tr{\xb}^2\xb-\frac{1}{2}\tr{\xb^2}\xb\right).
\end{align} 

Substituting equations $\eqref{detsum}$ and $\eqref{eq:1}$ into $\eqref{cayhamsum}$ produces equation
\begin{align}
0=\lambda^3\bigg(\yb^3-\tr{\yb}\yb^2+\frac{1}{2}\tr{\yb}^2&\yb-\frac{1}{2}\tr{\yb^2}\yb-\nonumber\\  
\frac{1}{3}\tr{\yb^3}\id-&\frac{1}{6}\tr{\yb}^3\id+\frac{1}{2}\tr{\yb}\tr{\yb^2}\id\bigg)+\nonumber\\
\lambda^2\bigg(\xb\yb^2+\yb^2\xb+\yb\xb\yb-\tr{\xb}&\yb^2-\tr{\yb}\xb\yb-\tr{\yb}\yb\xb+\nonumber\\
\frac{1}{2}\tr{\yb}^2\xb-\frac{1}{2}\tr{\yb^2}&\xb+ \tr{\xb}\tr{\yb}\yb-\tr{\xb\yb}\yb -\tr{\xb\yb^2}\id-\nonumber\\
\frac{1}{2}&\tr{\xb}\tr{\yb}^2\id+\frac{1}{2}\tr{\xb}\tr{\yb^2}\id +\tr{\yb}\tr{\xb\yb}\id\bigg)+\nonumber\\
\lambda\bigg(\yb\xb^2+\xb^2\yb+\xb\yb\xb-\tr{\yb}\xb^2-&\tr{\xb}\yb\xb-\tr{\xb}\xb\yb+\nonumber\\
\frac{1}{2}\tr{\xb}^2\yb-\frac{1}{2}\tr{\xb^2}\yb+&\tr{\xb}\tr{\yb}\xb-\tr{\xb\yb}\xb-\tr{\yb\xb^2}\id-\nonumber\\
\frac{1}{2}&\tr{\yb}\tr{\xb}^2\id+\frac{1}{2}\tr{\yb}\tr{\xb^2}\id+\tr{\xb}\tr{\xb\yb}\id\bigg) +\nonumber\\
\lambda^0\bigg(\xb^3-\tr{\xb}\xb^2+\frac{1}{2}\tr{\xb}^2\xb&-\frac{1}{2}\tr{\xb^2}\xb-\nonumber\\
\frac{1}{3}\tr{\xb^3}\id-&\frac{1}{6}\tr{\xb}^3\id+\frac{1}{2}\tr{\xb}\tr{\xb^2}\id\bigg).
\end{align}

However, $\C \M_r [\lambda]$ does not have zero divisors, so each coefficient of a power of $\lambda$ is zero.  In particular,
\begin{align}\label{polarization}
&\yb\xb^2 +\xb^2\yb+\xb\yb\xb= \tr{\yb}\xb^2+\tr{\xb}\yb\xb +\tr{\xb}\xb\yb
-\tr{\xb}\tr{\yb}\xb +\tr{\xb\yb}\xb+  \nonumber\\
&\tr{\yb\xb^2}\id+\tr{\xb}\tr{\xb\yb}\id -\frac{1}{2}\left(\tr{\xb}^2\yb -\tr{\xb^2}\yb- \tr{\yb}\tr{\xb}^2\id +\tr{\yb}\tr{\xb^2}\id\right).
\end{align}

Define $\pol(\xb,\yb)$ to be the right hand side of equation $\eqref{polarization}$; that is,
\begin{align}\pol(\xb,\yb)=\yb\xb^2 +\xb^2\yb+\xb\yb\xb.\label{polarization:2}\end{align}

Then substituting $\xb$ by the sum $\xb + \zb$ in equation $\eqref{polarization:2}$, yields the fundamental expression
\begin{align}\label{fundamental}
\xb\zb\yb+\zb\xb\yb+\yb\xb\zb+\yb\zb\xb+\xb\yb\zb+\zb\yb\xb=\pol(\xb+\zb,\yb)-\pol(\xb,\yb)-\pol(\zb,\yb).
\end{align}

Taking the trace of equation $\eqref{polarization}$ after multiplying it on the left by $\ub$ and on the right by $\vb$ yields equation
\begin{align}\label{fund1}
\tr{\ub\yb\xb^2\vb}+&\tr{\ub\xb^2\yb\vb}=-\tr{\ub\xb\yb\xb\vb}+\tr{\yb}\tr{\ub\xb^2\vb}+\tr{\xb}\tr{\ub\yb\xb\vb}+\nonumber\\ 
&\tr{\xb}\tr{\ub\xb\yb\vb}-\left(\tr{\xb}\tr{\yb}-\tr{\xb\yb}\right)\tr{\ub\xb\vb}+\nonumber\\
&\left(\tr{\yb\xb^2}-\tr{\xb}\tr{\xb\yb}\right)\tr{\ub\vb}-\frac{1}{2}\left(\tr{\xb}^2-\tr{\xb^2}\right)\tr{\ub\yb\vb} +\nonumber\\
&\frac{1}{2}\left(\tr{\yb}\tr{\xb}^2 -\tr{\yb}\tr{\xb^2}\right)\tr{\ub\vb}.
\end{align}

Suppose $\xb,\yb\in \M_r$.  Then substituting $\vb=\xb^{-1}$ and $\ub=\yb^{-1}\xb^{-1}$ into equation $\eqref{fund1}$, provides equation
\begin{align}\label{fund2}
\tr{\xb\yb\xb^{-1}\yb^{-1}}=&-\tr{\yb\xb\yb^{-1}\xb^{-1}}-3+\tr{\yb}\tr{\yb^{-1}}+2\tr{\xb}\tr{\xb^{-1}}-\nonumber\\
&\tr{\xb}\tr{\yb}\tr{\xb^{-1}\yb^{-1}} +\tr{\xb\yb}\tr{\xb^{-1}\yb^{-1}}- \nonumber\\ 
&\tr{\xb^{-1}}\tr{\yb^{-1}\xb^{-1}\yb\xb^{-1}} +\big(\tr{\yb\xb^2}-\tr{\xb}\tr{\xb\yb}+\nonumber\\
&\tr{\xb^{-1}}\tr{\yb}\big)\tr{\yb^{-1}\xb^{-2}}.
\end{align}

Equation $\eqref{cayham2}$ implies equation
\begin{align}\label{eq:2}
\tr{\yb^{-1}\xb^{-2}}=&\tr{\xb^{-1}}\tr{\xb^{-1}\yb^{-1}}-\tr{\xb}\tr{\yb^{-1}}+\tr{\xb\yb^{-1}}.
\end{align}
Since $\tr{\yb^{-1}\xb^{-1}\yb\xb^{-1}}=\tr{\left(\xb^{-1}\yb^{-1}\right)^2\yb^2},$
\begin{align}\label{eq:3}
\tr{\yb^{-1}\xb^{-1}\yb\xb^{-1}}=&\tr{\xb^{-1}\yb^{-1}}\tr{\xb^{-1}\yb}-\tr{\xb}\tr{\yb}\tr{\yb^{-1}}+\\
&\tr{\yb}\tr{\xb\yb^{-1}}+\tr{\xb}+\tr{\xb\yb}\tr{\yb^{-1}}.\nonumber
\end{align}
Substituting equations $\eqref{eq:2}$ and $\eqref{eq:3}$ into equation $\eqref{fund2}$, we then derive the fundamental commutator relation
\begin{align}\label{polyp1}
\tr{\xb\yb\xb^{-1}\yb^{-1}}=& -\tr{\yb\xb\yb^{-1}\xb^{-1}}+\tr{\xb}\tr{\xb^{-1}}\tr{\yb}\tr{\yb^{-1}}
+\tr{\xb}\tr{\xb^{-1}} +\nonumber\\
&\tr{\yb}\tr{\yb^{-1}}+\tr{\xb\yb}\tr{\xb^{-1}\yb^{-1}}+\tr{\xb\yb^{-1}}\tr{\xb^{-1}\yb}-\nonumber\\
&\tr{\xb^{-1}}\tr{\yb}\tr{\xb\yb^{-1}}-\tr{\xb}\tr{\yb^{-1}}\tr{\xb^{-1}\yb} -\nonumber\\
&\tr{\xb}\tr{\yb}\tr{\xb^{-1}\yb^{-1}}-\tr{\xb\yb}\tr{\xb^{-1}}\tr{\yb^{-1}}-3.
\end{align}

\section{Generators}
From $\eqref{generators}$, we need only consider words in $\F^+_r$ of length $7$ or less.  The length of a   
reduced word is defined to be the number of letters, counting multiplicity, in the word.  We now define
the {\it weighted length}, denoted by $|\cdot|_w$, to be the number of letters of a reduced word having
positive exponent plus twice the number of letters having negative exponent, again counting multiplicity.

For example, in $\F_2$, we have $|\xt_1\xt_2|=|\xt_1\xt_2|_w=2$ but $|\xt_1^3\xt_2^{-2}|=3+2=5$ while
$|\xt_1^3\xt_2^{-2}|_w=3+2\cdot 2=7$.

For a polynomial expression $e$ in matrix variables with coefficients in $\C[\X]$, we define the {\it degree of $e$}, denoted by $|\!|e|\!|$, to be 
the largest weighted length of monomial words in the expression of $e$ that is minimal among all such expressions for $e$.  Additionally, we define 
the {\it trace degree of $e$}, denoted by $|\!|e|\!|_{\mathrm{tr}}$, to be the maximal degree over all monomial words within a trace coefficient of 
$e$.

For example, when $\xb, \yb\in \M_r$, $|\!|\pol(\xb,\yb)|\!|\leq \mathrm{max}\{2|\!|\xb|\!|,|\!|\xb|\!|+|\!|\yb|\!|\},$ while 
$|\!|\pol(\xb,\yb)|\!|_{\mathrm{tr}}\leq
2|\!|\xb|\!|+|\!|\yb|\!|$.

We remark that given two such expressions $e_1$ and $e_2$, $$|\!|e_1e_2|\!|\leq|\!|e_1|\!|+|\!|e_2|\!|\ \text{ and }\
|\!|e_1e_2|\!|_{\mathrm{tr}}\leq\mathrm{max}\{|\!|e_1|\!|_{\mathrm{tr}},|\!|e_2|\!|_{\mathrm{tr}}\}.$$

We are now prepared to characterize the generators of $\C[\X]$.

\begin{lemma}
$\C[\R]^\G$ is generated by $\tr{\mathbf{w}}$ such that $\mathtt{w}\in\F_r$ is cyclicly reduced, $|\mathtt{w}|_w \leq 6$, and all
exponents of letters in $\mathtt{w}$ are $\pm 1$.
\end{lemma}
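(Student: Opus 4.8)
The plan is to run a descending argument on the weighted length, starting from the known generating set \eqref{generators}: it suffices to show that every $\tr{\wb}$ with $\wt$ a positive word of length $\le 7$ lies in the subalgebra generated by traces of cyclically reduced words with all exponents $\pm 1$ and weighted length $\le 6$. Since $\tr{\mathbf{AB}}=\tr{\mathbf{BA}}$, I first replace each $\wt$ by a cyclically reduced conjugate without changing $\tr{\wb}$. Throughout, \emph{reduces} will mean ``is congruent, modulo products and $\C$-combinations of traces of strictly smaller weighted length, to''; because such lower terms are precisely the inductive hypothesis, it is enough to control the leading (largest weighted length) part at each step.

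\textbf{Exponent normalization.} Specializing \eqref{cayham2} to $\yb=\id$ gives $\xb^2=\tr{\xb}\xb-\tr{\xb^{-1}}\id+\xb^{-1}$ for every $\xb\in\M_r$, and iterating this together with the cubic \eqref{cayham} rewrites any maximal power $\xb_i^{a}$ with $a\ge 2$ as a $\C[\X]$-combination of $\{\xb_i,\id,\xb_i^{-1}\}$. The essential bookkeeping point is that such a block contributes weighted length at most $2$ after substitution (the only surviving weight-$2$ term is $\xb_i^{-1}$), so a cyclically reduced positive word of length $\ell=\sum_j a_j\le 7$ is carried to traces of words with exponents $\pm1$ and weighted length at most $\sum_j\min(a_j,2)\le \ell\le 7$. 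Thus after this normalization every generator is expressed through traces of cyclically reduced words with exponents $\pm1$ and weighted length $\le 7$.

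\textbf{The descent from $7$ to $\le 6$.} This is the heart of the matter, and I would separate two tools. First, if some cyclic conjugate of the word contains a subword $\mathbf a$ to a power $\ge 3$, then applying Cayley--Hamilton \emph{to the composite matrix} $\mathbf a$ (the cubic form of \eqref{cayham}) strictly lowers the weighted length and splits the trace into products of shorter ones, as in the model computation $\tr{(\xb_1\xb_2)^3\xb_1}=\tr{\xb_1\xb_2}\tr{(\xb_1\xb_2)^2\xb_1}-\tr{(\xb_1\xb_2)^{-1}}\tr{\xb_1\xb_2\xb_1}+\tr{\xb_1}$. Second, and more seriously, when the maximal repetition is only a \emph{square} of a composite block, or the letters are all distinct --- the extreme instances being $\tr{(\xb_1\xb_2\xb_3)^2\xb_4}$ and $\tr{\xb_1\xb_2\cdots\xb_7}$ --- per-factor Cayley--Hamilton merely trades a positive block for its inverse of \emph{equal} weighted length and cycles back, so the genuine tool is the fundamental $3\times 3$ trace identity: the three-variable polarization \eqref{fundamental} of \eqref{polarization}, multiplied on the left by $\ub$ and right by $\vb$ and traced, expresses the symmetric sum over the six orderings of a chosen triple in the word as a $\C$-combination of traces of weighted length $\le 6$ (this is the refinement of \eqref{fund1} to three distinct variables).

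\textbf{Main obstacle.} I expect the rigid cases of the previous paragraph to be the principal difficulty, and it is twofold: organizing the polarized identity \eqref{fund1}/\eqref{fundamental} so that a \emph{single} target trace of weighted length $7$ --- not merely a symmetric combination of several orderings --- is pinned down in terms of weighted length $\le 6$, and verifying that every shape of weighted-length-$7$ word (the sign patterns $p+2n=7$ with $p$ positive and $n$ negative letters, together with their cyclic arrangements) either exposes a cube-or-higher repeated factor or is reachable by these identities. That the threshold is exactly the sharp Procesi--Razmyslov value $n(n+1)/2=6$ for $n=3$ is the structural reason no reduction below $6$ is forced and the induction halts precisely there; confirming that it does halt, via a finite but delicate case analysis, is the endgame.
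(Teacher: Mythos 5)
Your plan follows the paper's own route---power reduction via Cayley--Hamilton, then the polarization identities for the rigid cases---and your diagnosis of where the real difficulty sits is exactly right. The problem is that the proposal stops at that difficulty instead of resolving it: you note that tracing \eqref{fundamental} against $\ub$, $\vb$ controls only the \emph{symmetric} sum over the six orderings of a chosen triple, you call the extraction of a single trace the ``main obstacle,'' and you defer it to a ``finite but delicate case analysis'' that is never performed. That extraction is the actual content of the paper's proof, and it is done uniformly, with no case analysis over sign patterns, by two ideas absent from your write-up. First, an antisymmetrization lemma: from \eqref{eq:6} one has $\dg{\xb^2\zb\yb^2}\lneq 2\dg{\xb}+\dg{\zb}+2\dg{\yb}$, and polarizing (replacing $\yb\mapsto\ub+\vb$, then inserting a further letter $\wb$) gives $\dg{\xb^2\zb\wb\ub\vb}\leq 5$; polarizing the remaining square ($\xb\mapsto\xb+\yb$) then shows that transposing two letters in a product of six length-one letters changes the sign modulo terms of degree $\leq 5$, i.e.\ $\dg{\xb\yb\zb\ub\vb\wb+\mathrm{sgn}(\sigma)\,\sigma(\xb\yb\zb\ub\vb\wb)}\leq 5$ for every transposition $\sigma$, hence for every permutation. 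Second, the substitution $\xb\mapsto\xb\yb$, $\yb\mapsto\zb\ub$, $\zb\mapsto\vb\wb$ into \eqref{fundamental}, giving \eqref{eq:7}: because the blocks have size two, all six words on the left-hand side are \emph{even} permutations of the letters of the first one, so by the antisymmetrization each is congruent to the first modulo degree $\leq 5$; the left-hand side is therefore $6\,\xb\yb\vb\wb\zb\ub$ up to degree $\leq 5$, while the right-hand side has degree $\leq 4$ and trace degree $\leq 6$ by the stated bounds on $\pol$. This pins down the single word and yields $\tdg{\tr{\wb}}\leq 6$ for every word of length at least $7$.

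Without these two steps your induction has no engine for precisely the cases you single out as extreme, $\tr{(\xb_1\xb_2\xb_3)^2\xb_4}$ and $\tr{\xb_1\cdots\xb_7}$: the symmetric sum by itself cannot be unscrambled, since a priori the six orderings are unrelated elements of $\C\M_r$. Note also that once the two ideas above are in place, the enumeration of shapes $p+2n=7$ you anticipate is unnecessary---the argument applies verbatim to any product of six letters, and traces of longer words follow by splitting off a six-letter subproduct. So the proposal is a correct plan with the same architecture as the paper, but it has a genuine gap at the decisive step; what is missing is the sign trick (transpositions act by $-1$ modulo lower degree) combined with the choice of two-letter blocks, which makes every reordering even and collapses the symmetric sum onto a single trace.
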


\begin{proof}
For $n\geq 2$, equations $\eqref{cayham}$ and $\eqref{cayham2}$ determine equation
\begin{align}\label{powerreduce}
\tr{\ub\xb^n\vb}=&\tr{\xb}\tr{\ub\xb^{n-1}\vb}-\tr{\xb^{-1}}\tr{\ub\xb^{n-2}\vb}+\tr{\ub\xb^{n-3}\vb},
\end{align}
which recursively reduces $\tr{\mathbf{w}}$ to a polynomial in traces of words having no letter with exponent
other than $\pm 1$.  If however $n\leq -2$ then we first apply equation \eqref{cayham2} and then use \eqref{powerreduce}.  Hence it follows
that $\mathtt{w}$ is cyclically
reduced, and all letters have exponents $\pm 1$.

Substituting $\xb\mapsto \yb$ and $\yb\mapsto \xb\zb$ in equation $\eqref{polarization:2}$, and multiplying the resulting expression on the left by 
$\xb$ gives
\begin{align}\label{eq:4}\xb^2\zb\yb^2=-(\xb\yb^2\xb)\zb-(\xb\yb\xb)\zb\yb+\xb\pol(\yb,\xb).\end{align}

Replacing $\yb\mapsto \yb^2$ in equation $\eqref{polarization:2}$ produces $$\yb^2\xb^2+\xb^2\yb^2+\xb\yb^2\xb=\pol(\xb,\yb^2),$$ which substituted 
into equation
$\eqref{eq:4}$ yields equation
\begin{align}\label{eq:5}\xb^2\zb\yb^2=(\yb^2\xb^2+\xb^2\yb^2-\pol(\xb,\yb^2))\zb+(\yb\xb^2+\xb^2\yb-\pol(\xb,\yb))\zb\yb+\xb\pol(\yb,\xb\zb).\end{align}

Now substituting $\xb\mapsto \yb$ and $\yb\mapsto \xb^2\zb$ in equation $\eqref{polarization:2}$ and multiplying 
$\zb\yb^2+\yb^2\zb+\yb\zb\yb=\pol(\yb,\zb)$ on the left by $\xb^2$
gives $$\xb^2\zb\yb^2+\yb^2\xb^2\zb+\yb\xb^2\zb\yb=\pol(\yb,\xb^2\zb),$$ and $$\xb^2\zb\yb^2+\xb^2\yb^2\zb+\xb^2\yb\zb\yb=\xb^2\pol(\yb,\zb),$$
which substituted into equation $\eqref{eq:5}$ results in
$$3\xb^2\zb\yb^2=\pol(\yb,\xb^2\zb)+\xb\pol(\yb,\xb\zb)-\pol(\xb,\yb^2)\zb-\pol(\xb,\yb)\zb\yb+\xb^2\pol(\yb,\zb)\label{eq:6}.$$

Thus, $$|\!|\xb^2\zb\yb^2|\!|\lneq 2|\!|\xb|\!|+|\!|\zb|\!|+2|\!|\yb|\!| \text{ and } \tdg{\xb^2\zb\yb^2}\leq 2|\!|\xb|\!|+|\!|\zb|\!|+2|\!|\yb|\!|.$$

For the remainder of the argument assume $\xb,\yb,\zb,\ub,\vb,\wb$ are of length $1$.  Replacing $\yb\mapsto \ub+\vb$ in equation $\eqref{eq:6}$ we 
deduce
$|\!|\xb^2\zb(\ub^2+\ub\vb+\vb\ub+\vb^2)|\!|\leq 4.$  This in turn implies $|\!|\xb^2\zb(\ub\vb+\vb\ub)|\!|\leq 4$ and so 
$|\!|\xb^2\zb\wb(\ub\vb+\vb\ub)|\!|\leq 5.$  In a
like manner, we have that both $\dg{\xb^2\zb(\wb\ub\vb+\vb\wb\ub)}\leq 5$ and $\dg{\xb^2\zb(\wb\vb+\vb\wb)\ub}\leq 5$.  Hence we conclude that
$$\dg{2\xb^2\zb\wb\ub\vb}=\dg{\xb^2\zb\wb(\ub\vb+\vb\ub)+\xb^2\zb(\wb\ub\vb+\vb\wb\ub)-\xb^2\zb(\wb\vb+\vb\wb)\ub}\leq 5,$$ and 
$$\tdg{2\xb^2\zb\wb\ub\vb}\leq 6.$$

Replacing $\xb\mapsto \xb +\yb$ in $\xb^2\zb\wb\ub\vb$ we come to the conclusion that $\dg{\xb\yb\zb\wb\ub\vb+\yb\xb\zb\wb\ub\vb}\leq 5$.  That is, 
permuting $\xb$ and $\yb$
introduces a factor of $-1$ and a polynomial term of lesser degree.  Slight variation in our analysis concludes the same result for any transposition 
of two letters in the word
$\xb\yb\zb\wb\ub\vb$.

Therefore, if $\sigma$ is a permutation of the letters $\xb,\yb,\zb,\ub,\vb,\wb$ then
$$\dg{\xb\yb\zb\ub\vb\wb+\mathrm{sgn}(\sigma)\sigma(\xb\yb\zb\ub\vb\wb)}\leq 5\ \text{while}\ 
\tdg{\xb\yb\zb\ub\vb\wb+\mathrm{sgn}(\sigma)\sigma(\xb\yb\zb\ub\vb\wb)}\leq 6.$$

Lastly, making the substitutions $\xb\mapsto \xb\yb$, $\yb\mapsto \zb\ub$, and $\zb\mapsto \vb\wb$ in the fundamental expression 
$\eqref{fundamental}$, we derive
\begin{align} \label{eq:7} {\xb\yb}\vb\wb{\zb\ub}+&\vb\wb{\xb\yb}{\zb\ub}+{\zb\ub}{\xb\yb}\vb\wb+{\zb\ub}\vb\wb{\xb\yb}
+{\xb\yb}{\zb\ub}\vb\wb+\vb\wb{\zb\ub}{\xb\yb}=\nonumber\\&\pol({\xb\yb}+\vb\wb,{\zb\ub})-\pol({\xb\yb},\zb\ub)-\pol(\vb\wb, {\zb\ub}).\end{align}

However, each word on the left hand side of equation $\eqref{eq:7}$ is an even permutation of the first, so

$$\dg{6{\xb\yb}\vb\wb{\zb\ub}}\leq 5\ \text{and}\ \tdg{6{\xb\yb}\vb\wb{\zb\ub}}\leq 6.$$
Hence, if $\wb$ is a word of length $7$ or more, then $\tdg{\tr{\wb}}\leq 6$.
Moreover, this process gives an iterative algorithm for reducing such an expression.
\end{proof}
As an immediate general consequence we have the following classification of generators.  It is not minimal, however.

\begin{corollary}\label{generatorform}
$\C[\X]$ is generated by traces of the form
\begin{align*}&\tr{\xb_i},\tr{\xb_i^{-1}}, \tr{\xb_i\xb_j}, \tr{\xb_i\xb_j\xb_k}, \tr{\xb_i\xb_j^{-1}}, \tr{\xb_i^{-1}\xb_j^{-1}},\\ 
& \tr{\xb_i\xb_j\xb_k^{-1}}, \tr{\xb_i\xb_j\xb_k\xb_l}, \tr{\xb_i\xb_j\xb_k\xb_l\xb_m}, \tr{\xb_i\xb_j\xb_k\xb_l^{-1}},\\ 
& \tr{\xb_i\xb_j^{-1}\xb_k^{-1}}, \tr{\xb_i^{-1}\xb_j^{-1}\xb_k^{-1}}, \tr{\xb_i\xb_j\xb_k^{-1}\xb_l^{-1}},\\ 
&\tr{\xb_i\xb_j^{-1}\xb_k\xb_l^{-1}}, \tr{\xb_i\xb_j\xb_k\xb_l\xb_m^{-1}},\tr{\xb_i\xb_j\xb_k\xb_l\xb_m\xb_n},\end{align*}
where the indices may not be distinct in a given generator.  
\end{corollary}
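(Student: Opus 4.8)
The plan is to deduce the corollary directly from the preceding Lemma as a finite combinatorial enumeration. The Lemma tells us that $\C[\X]$ is generated by traces $\tr{\wb}$ where $\wt\in\F_r$ is cyclically reduced, every letter of $\wt$ occurs with exponent $\pm1$, and the weighted length satisfies $|\wt|_w\leq 6$. Since $\tr{AB}=\tr{BA}$, the trace of a word depends only on its cyclic equivalence class, so it suffices to enumerate such words up to cyclic rotation, that is, as \emph{necklaces} of signed letters, and to exhibit a representative of each class among the stated forms.

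First I would parametrize each admissible word by the pair $(p,q)$, where $p$ is the number of letters carrying exponent $+1$ and $q$ the number carrying exponent $-1$, each counted with multiplicity. By the definition of weighted length, the constraint $|\wt|_w\leq 6$ becomes $p+2q\leq 6$. Listing the nonnegative solutions yields the finite set
$$q=0:\ p\in\{1,\dots,6\};\quad q=1:\ p\in\{0,\dots,4\};\quad q=2:\ p\in\{0,1,2\};\quad q=3:\ p=0.$$
Each such pair then corresponds to one or more cyclic arrangements of $p$ positive and $q$ negative letters, and the remaining task is to count those arrangements.

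The next step is therefore to tally the distinct necklaces for each $(p,q)$. When all letters share a sign ($q=0$ or $p=0$) there is a single necklace, giving the purely positive forms $\tr{\xb_i},\dots,\tr{\xb_i\xb_j\xb_k\xb_l\xb_m\xb_n}$ and the purely negative forms $\tr{\xb_i^{-1}},\tr{\xb_i^{-1}\xb_j^{-1}},\tr{\xb_i^{-1}\xb_j^{-1}\xb_k^{-1}}$. When exactly one letter carries the minority sign ($q=1$ with $p\leq 4$, or symmetrically $(p,q)=(1,2)$) a single rotation moves that letter to a fixed position, so again there is one necklace each, namely $\tr{\xb_i\xb_j^{-1}},\tr{\xb_i\xb_j\xb_k^{-1}},\tr{\xb_i\xb_j\xb_k\xb_l^{-1}},\tr{\xb_i\xb_j\xb_k\xb_l\xb_m^{-1}}$ and $\tr{\xb_i\xb_j^{-1}\xb_k^{-1}}$. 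The only pair admitting two inequivalent necklaces is $(2,2)$: two negative letters placed among four cyclic positions sit either adjacently or alternately, producing exactly the two forms $\tr{\xb_i\xb_j\xb_k^{-1}\xb_l^{-1}}$ and $\tr{\xb_i\xb_j^{-1}\xb_k\xb_l^{-1}}$. Matching every necklace to its representative then exhausts the displayed list.

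The main point requiring care is precisely this necklace bookkeeping, and in particular confirming that $(2,2)$ is the sole pair that splits into two cyclic classes while every other admissible pair collapses to one; this is what accounts for the two mixed quartic entries against a single entry of every other type. A secondary subtlety is that the corollary permits repeated indices, so certain instances of these forms (for example $\tr{\xb_i\xb_i^{-1}}$) fail to be cyclically reduced; such coincidences are harmless, since the cancellation $\xb_i\xb_i^{-1}=\id$ merely shortens the word to one of strictly smaller weighted length already appearing in the list. Hence no generator is lost, and the enumeration is complete, establishing the claimed (non-minimal) generating set.
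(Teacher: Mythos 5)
Your proposal is correct and is essentially the paper's own argument: the paper states the corollary as an ``immediate general consequence'' of the preceding lemma, leaving implicit exactly the enumeration you carry out, namely listing the pairs $(p,q)$ with $p+2q\leq 6$ and the cyclic sign patterns for each. Your bookkeeping is accurate --- in particular the observation that $(2,2)$ is the unique admissible pair with two distinct necklaces, accounting for both $\tr{\xb_i\xb_j\xb_k^{-1}\xb_l^{-1}}$ and $\tr{\xb_i\xb_j^{-1}\xb_k\xb_l^{-1}}$ --- so nothing is missing.
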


\chapter{Structure of $\C[\G^{\times 2}\aq \G]$}

\section{Minimal Generators}

As a consequence of Corollary $\ref{generatorform}$, we have
\begin{lemma}\label{gens}
$\C [\G\times \G]^\G$ is generated by
\begin{align*}
&\tr{\xb_1},\ \ \tr{\xb_2},\ \ \tr{\xb_1\xb_2},\ \ \tr{\xb_1\xb_2^{-1}},\ \ \tr{\xb_1^{-1}},\\
&\tr{\xb_2^{-1}},\ \ \tr{\xb_1^{-1}\xb_2^{-1}},\ \ \tr{\xb_1^{-1}\xb_2},\ \ \tr{\xb_1\xb_2\xb_1^{-1}\xb_2^{-1}}.
\end{align*}
\end{lemma}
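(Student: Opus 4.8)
The plan is to start from Corollary \ref{generatorform}, specialize every index to the two-element set $\{1,2\}$, and then prune the resulting list down to the nine traces claimed. The reduction has two distinct flavors, and I would organize the argument around them. First, there are the generators that collapse because of repeated indices: any trace in which a variable $\xb_i$ (or $\xb_i^{-1}$) appears to a power of $2$ or more, after index-specialization, is reducible by the power-reduction identity \eqref{powerreduce}, since that identity expresses $\tr{\ub\xb^n\vb}$ in terms of traces of strictly shorter words. For example $\tr{\xb_i\xb_j\xb_k}$ with $i=j=k=1$ becomes $\tr{\xb_1^3}$, which \eqref{dettr} and \eqref{trinv} rewrite entirely in terms of $\tr{\xb_1}$, $\tr{\xb_1^{-1}}$ (recall $\tr{\xb^*}=\tr{\xb^{-1}}$ for $\xb\in\M_r$). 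So the first step is to run through each of the sixteen trace-types in the corollary and discard every specialization that contains a repeated letter of the same sign.

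Second, there are the longer words that survive with two \emph{distinct} indices but are still reducible. The key tool here is the fundamental commutator relation \eqref{polyp1}: it shows that among the length-four traces $\tr{\xb_1\xb_2\xb_1^{-1}\xb_2^{-1}}$ and $\tr{\xb_2\xb_1\xb_2^{-1}\xb_1^{-1}}$, each is expressible via the other together with products of lower-length traces, so only one commutator trace is needed as a generator. More generally, every generator of the corollary of trace-length $4$ or $5$ in which the indices take only two values and the two-letter words appear with multiplicity (e.g.\ $\tr{\xb_1\xb_2\xb_1\xb_2}$, $\tr{\xb_1\xb_2\xb_1\xb_2\xb_1}$, $\tr{\xb_1\xb_2\xb_2\xb_1^{-1}}$) reduces: either a letter repeats consecutively and \eqref{powerreduce} applies after cyclic permutation, or one applies \eqref{fund1}/\eqref{fund2} (with $\ub,\vb$ chosen as in the derivation of \eqref{polyp1}) to lower the trace degree. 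So the second step is to verify that, once the indices are restricted to $\{1,2\}$, every surviving candidate of trace-length $\ge 4$ other than the single commutator $\tr{\xb_1\xb_2\xb_1^{-1}\xb_2^{-1}}$ is reducible through these relations.

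The remaining check is that the nine listed traces really are the survivors: the length-one traces $\tr{\xb_1},\tr{\xb_2},\tr{\xb_1^{-1}},\tr{\xb_2^{-1}}$; the four mixed length-two traces $\tr{\xb_1\xb_2},\tr{\xb_1\xb_2^{-1}},\tr{\xb_1^{-1}\xb_2},\tr{\xb_1^{-1}\xb_2^{-1}}$; and the commutator. Note that the length-two same-sign traces with distinct indices are covered, while $\tr{\xb_i\xb_j^{-1}}$ with $i=j$ gives $\tr{\id}=3$, a constant; and the triple products $\tr{\xb_i\xb_j\xb_k}$ etc.\ with indices in $\{1,2\}$ always force a repeat of some letter, hence reduce by the above. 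I expect the main obstacle to be bookkeeping rather than any single hard step: one must be careful that the reductions are genuinely \emph{decreasing} in the degree $\dg{\cdot}$ or trace degree $\tdg{\cdot}$ so that the algorithm terminates, and one must confirm that no two of the nine claimed generators are themselves related by \eqref{polyp1} or its companions — i.e.\ that the list is not merely generating but cannot be shortened by these particular identities. Establishing that the commutator trace is not itself expressible in the other eight (which is what makes it a genuine generator rather than redundant) is the subtlest point, and I would address it by noting that \eqref{polyp1} relates the two commutator orderings symmetrically but does not eliminate the commutator altogether, so at least one such trace must be retained.
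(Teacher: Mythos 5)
Your proposal follows essentially the same route as the paper's proof: enumerate the candidate generators coming from Corollary \ref{generatorform} specialized to two letters, reduce every word containing a repeated same-sign letter (possibly as a power of a subword, e.g.\ $(\xb_1\xb_2)^2$) via \eqref{powerreduce} and \eqref{cayham2}, and use the fundamental commutator relation \eqref{polyp1} to express the inverse commutator $\tr{\xb_2\xb_1\xb_2^{-1}\xb_1^{-1}}$ in terms of the nine listed traces. One remark: the lemma asserts only generation, so your closing worry about minimality (showing the commutator trace cannot be dropped) is not needed here, and indeed your argument for it would not suffice; the paper establishes that point separately in the proof of Lemma \ref{ideallemma}.
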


\begin{proof}
The words of weighted length $1,2,3,4$ with exponents $\pm 1$ are unambiguously cyclically equivalent to one of $$\xt_1,
\xt_2, \xt_1^{-1}, \xt_2^{-1}, \xt_1\xt_2, \xt_1\xt_2^{-1}, \xt_2\xt_1^{-1}, \xt_1^{-1}\xt_2^{-1}, (\xt_1\xt_2)^2.$$  But equation $\eqref{cayham2}$ reduces the
latter most of these in terms of the others.  All words in two letters of length $5$ are cyclically equivalent to a
word with an exponent whose magnitude is greater than $1$, except $\xt_1\xt_2^{-1}\xt_1\xt_2$, and $\xt_2\xt_1^{-1}\xt_2\xt_1$.
Both are cyclically equivalent to $(\xt_i\xt_j)^2\xt_j^{-2}$, which in turn, by equation $\eqref{powerreduce}$, reduces to
expressions in the other variables.  The only words of weighted length $6$ and with 
exponents only $\pm 1$ are $\xt_1\xt_2 \xt_1^{-1}\xt_2^{-1}$, its inverse, and $(\xt_1\xt_2)^3$.  But the latter most of
these is reduced by equation $\eqref{cayham}$.  Lastly, letting $\xb=\xb_1$ and $\yb=\xb_2$ in equation
$\eqref{polyp1}$, we have
\begin{align}\label{polyp2}
\tr{\xb_2\xb_1\xb_2^{-1}\xb_1^{-1}}=& -\tr{\xb_1\xb_2\xb_1^{-1}\xb_2^{-1}}+\tr{\xb_1}\tr{\xb_1^{-1}}\tr{\xb_2}\tr{\xb_2^{-1}}+\nonumber\\
&\tr{\xb_1}\tr{\xb_1^{-1}}+ \tr{\xb_2}\tr{\xb_2^{-1}}+\tr{\xb_1\xb_2}\tr{\xb_1^{-1}\xb_2^{-1}}+\nonumber\\
&\tr{\xb_1\xb_2^{-1}}\tr{\xb_1^{-1}\xb_2}-\tr{\xb_1^{-1}}\tr{\xb_2}\tr{\xb_1\xb_2^{-1}}-\nonumber\\
&\tr{\xb_1}\tr{\xb_2^{-1}}\tr{\xb_1^{-1}\xb_2} -\tr{\xb_1}\tr{\xb_2}\tr{\xb_1^{-1}\xb_2^{-1}}-\nonumber\\
&\tr{\xb_1\xb_2}\tr{\xb_1^{-1}}\tr{\xb_2^{-1}}-3,
\end{align}
which expresses the trace of the inverse of the commutator in terms of the other expressions.
\end{proof}

The center of $\G$ is $\zeta(\G)=\{\omega\id\ |\ \omega^3=1\}\cong\mathbb{Z}_3$.  There is an action of $\zeta(\G)^{\times 2}$ on $\C[\X]$ given by
$$(\omega_1\id,\omega_2\id)\cdot\tr{\mathbf{w}(\xb_1,\xb_2)}=\tr{\mathbf{w}(\omega_1\xb_1,\omega_2\xb_2)}= 
\omega_1^{|\mathbf{w}(\xb_1,\id)|_w}\omega_2^{|\mathbf{w}(\id,\xb_2)|_w}\tr{\mathbf{w}(\xb_1,\xb_2)}.$$  Applying this action to the generators and recording the
orbit by a $9$-tuple, we can distingish all generators and by doing so grade the ring:

\begin{prop}
$$\C[\X]=\sum_{(\omega_1,\omega_2)\in\mathbb{Z}_3\times \mathbb{Z}_3} \mathrm{P}_{(\omega_1,\omega_2)}$$ is a graded ring, where $\mathrm{P}_{(\omega_1,\omega_2)}$ is 
the linear span over $\C$ of all monomials whose orbit under $\mathbb{Z}_3\times \mathbb{Z}_3$ equals one of the orbits of $t_{(\pm i)}$; which themselves are in 
bijective correspondence with the elements of $\mathbb{Z}_3\times \mathbb{Z}_3$.
\end{prop}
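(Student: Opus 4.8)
The plan is to recognize this decomposition as the isotypic (weight-space) decomposition of $\C[\X]$ under the finite abelian group $A=\zeta(\G)^{\times 2}\cong\mathbb{Z}_3\times\mathbb{Z}_3$, and then to verify the two grading axioms directly from the fact that the nine generators of Lemma \ref{gens} are simultaneous eigenvectors. First I would record that $A$ acts on $\C[\X]$ by $\C$-algebra automorphisms: the scaling $\xb_k\mapsto\omega_k\xb_k$ commutes with diagonal conjugation by $\G$ because each $\omega_k\id$ is central, so the action preserves $\C[\R]^\G=\C[\X]$, and it respects sums and products of trace functions. Consequently $a\cdot(rs)=(a\cdot r)(a\cdot s)$ for all $r,s\in\C[\X]$ and $a\in A$. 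Since the action is defined intrinsically by scaling matrix entries, no separate consistency check against the relations is needed; the displayed action formula preceding the statement is merely a computation of its effect on a trace generator.

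Second, that same formula shows each generator $\tr{\mathbf{w}}$ is a semi-invariant, $a\cdot\tr{\mathbf{w}}=\chi_{\mathbf{w}}(a)\tr{\mathbf{w}}$, where the character $\chi_{\mathbf{w}}$ of $A$ is determined by the residue pair $\big(|\mathbf{w}(\xb_1,\id)|_w,\ |\mathbf{w}(\id,\xb_2)|_w\big)\in\mathbb{Z}_3\times\mathbb{Z}_3$. I would tabulate these pairs for the nine generators and check they are pairwise distinct: for instance $\tr{\xb_1}$ gives $(1,0)$, $\tr{\xb_1^{-1}}$ gives $(2,0)$, $\tr{\xb_1\xb_2^{-1}}$ gives $(1,2)$, and the commutator $\tr{\xb_1\xb_2\xb_1^{-1}\xb_2^{-1}}$ gives $(1+2,\,1+2)\equiv(0,0)$. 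Verifying that the nine pairs are distinct shows they exhaust $\mathbb{Z}_3\times\mathbb{Z}_3$, which is exactly the asserted bijection between the orbits of the $\ti{\pm i}$ and the elements of $\mathbb{Z}_3\times\mathbb{Z}_3$, and lets me index $\mathrm{P}_{(\omega_1,\omega_2)}$ by the unique generator carrying that character.

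Third, because the generators are eigenvectors and the action is by automorphisms, every monomial in the generators is again an eigenvector whose character is the product of the characters of its factors; hence each monomial lies in a single $\mathrm{P}_{(\omega_1,\omega_2)}$. As the generators generate $\C[\X]$ by Lemma \ref{gens}, this yields $\C[\X]=\sum_{(\omega_1,\omega_2)}\mathrm{P}_{(\omega_1,\omega_2)}$, and the automorphism identity gives at once $\mathrm{P}_{\chi}\cdot\mathrm{P}_{\psi}\subseteq\mathrm{P}_{\chi\psi}$, i.e.\ multiplication is compatible with addition in $\mathbb{Z}_3\times\mathbb{Z}_3$. Directness of the sum then follows from the linear independence of distinct characters of a finite abelian group over $\C$, equivalently by applying the idempotents $e_\chi=\tfrac{1}{9}\sum_{a\in A}\overline{\chi(a)}\,a$ (here $9$ is invertible in $\C$ and the $\overline{\chi(a)}$ are roots of unity), so that $\C[\X]=\bigoplus_{(\omega_1,\omega_2)}\mathrm{P}_{(\omega_1,\omega_2)}$ is a genuine $\mathbb{Z}_3\times\mathbb{Z}_3$-grading. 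The argument is essentially routine representation theory of finite abelian groups; the only real content, and the step most prone to error, is the bookkeeping of the second paragraph, namely confirming that the nine residue pairs are all distinct so that the indexing map is a bijection.
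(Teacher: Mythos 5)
Your proof is correct, and it diverges from the paper's own justification at the one step that actually requires an argument. You and the paper share the setup: the same scaling action of $\zeta(\G)^{\times 2}\cong\mathbb{Z}_3\times\mathbb{Z}_3$, and the same bookkeeping showing the nine generators of Lemma 3.1 carry the nine distinct characters (your residue pairs agree, mod $3$, with the paper's weighted-length exponents, since $\omega^{2}=\omega^{-1}$ for a cube root of unity). Where you part ways is in proving that the sum of the $\mathrm{P}_{(\omega_1,\omega_2)}$ is \emph{direct}, i.e.\ a grading rather than merely a filtration. The paper's route is presentation-dependent: it remarks that the defining relations are polarizations of the Cayley--Hamilton polynomial, which are homogeneous of weight zero, so ``no relation can compromise summands''; in the rank-two case this amounts to checking that $P$, $Q$, and hence $t_{(5)}^2-Pt_{(5)}+Q$ all have weight $(0,0)$. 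Your route is presentation-independent: since the action is an intrinsic algebra-automorphism action of a finite abelian group on $\C[\X]$ itself (scaling matrix variables preserves the determinant-one locus and commutes with conjugation), the isotypic decomposition via the idempotents $e_\chi$ exists a priori, and no inspection of the relations is needed. This is cleaner and generalizes immediately to free groups of arbitrary rank, whereas the paper's generalization must invoke knowledge of the relations (polarized Cayley--Hamilton identities, via Procesi--Razmyslov theory). What the paper's argument buys in return is explicit compatibility of the grading with the presentation $\C[\X]\cong R[t_{(5)}]/I$, which is the form in which the symmetry is exploited later when manipulating $P$ and $Q$.
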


In fact the situation is general.  For a rank $r$ free group, $\mathbb{Z}_3^{\times r}$ acts on the generators of $\C[\X]$ and gives a filtration.  However, 
since the relations are polarizations of the Cayley-Hamilton polynomial, which itself has a zero grading, no relation can compromise summands.  So the filtration is a
grading.

\section{Hyper-Surface in $\C^9$}

Let $$\overline{R}=\C[t_{(1)},t_{(-1)},t_{(2)},t_{(-2)},t_{(3)},t_{(-3)},t_{(4)},t_{(-4)},
t_{(5)},t_{(-5)}]$$ be the complex polynomial ring freely generated by $\{t_{(\pm i)},\ 1\leq i\leq 5\},$ and let
$$R=\C[t_{(1)},t_{(-1)},t_{(2)},t_{(-2)},t_{(3)},t_{(-3)},t_{(4)},t_{(-4)}]$$ be its subring generated
by $\{t_{(\pm i)},\  1\leq i\leq 4\},$  so
$\overline{R}=R[t_{(5)},t_{(-5)}].$
Define the following ring homomorphism,
$$R[t_{(5)},t_{(-5)}]\stackrel{ \Pi}{\longrightarrow} \C[\G\times \G]^\G$$ by

\begin{center}
\begin{tabular}{ll}
$t_{(1)}\mapsto\tr{\mathbf{x}_1}$ & $t_{(-1)}\mapsto\tr{\mathbf{x}_1^{-1}}$\\
$t_{(2)}\mapsto\tr{\mathbf{x}_2}$& $t_{(-2)}\mapsto\tr{\mathbf{x}_2^{-1}}$\\
$t_{(3)}\mapsto\tr{\mathbf{x}_1\mathbf{x}_2}$& $t_{(-3)}\mapsto\tr{\mathbf{x}_1^{-1}\mathbf{x}_2^{-1}}$\\
$t_{(4)}\mapsto\tr{\mathbf{x}_1\mathbf{x}_2^{-1}}$& $t_{(-4)}\mapsto\tr{\mathbf{x}_1^{-1}\mathbf{x}_2}$\\
$t_{(5)}\mapsto\tr{\mathbf{x}_1\mathbf{x}_2\mathbf{x}_1^{-1}\mathbf{x}_2^{-1}}$& $t_{(-5)}\mapsto\tr{\mathbf{x}_2\mathbf{x}_1\mathbf{x}_2^{-1}\mathbf{x}_1^{-1}}$.
\end{tabular}
\end{center}

It follows from Lemma \ref{gens} that $$\C[\X]\cong R[t_{(5)}, t_{(-5)}]/\ker(\Pi).$$ In other words, $\Pi$ is a
surjective algebra morphism.\\
We define $$P=t_{(1)}t_{(-1)}t_{(2)}t_{(-2)}-t_{(1)}t_{(2)}t_{(-3)}-t_{(-1)}t_{(-2)}t_{(3)}
-t_{(1)}t_{(-2)}t_{(-4)}-t_{(-1)}t_{(2)}t_{(4)}$$
$$+t_{(1)}t_{(-1)}+t_{(2)}t_{(-2)}+t_{(3)}t_{(-3)}+t_{(4)}t_{(-4)}-3,$$ and so $P\in R$.
Moreover, by equation $\eqref{polyp2}$, $$P-(t_{(5)}+t_{(-5)}) \in \ker(\Pi).$$

Hence it follows that the composite map $$R[t_{(5)}]\hookrightarrow R[t_{(5)}, t_{(-5)}]\twoheadrightarrow R[t_{(5)},
t_{(-5)}]/\ker(\Pi),$$ is an epimorphism.  Let $I$ be the kernel of this composite map, and suppose there exists $Q\in
R$ so $Q-t_{(5)}t_{(-5)}\in \ker(\Pi)$ as well.

Then under this assumption, we prove
\begin{lemma}\label{ideallemma}
$I$ is principally generated by the polynomial
\begin{equation}\label{idealequ}
t_{(5)}^2-Pt_{(5)}+Q.
\end{equation}
\end{lemma}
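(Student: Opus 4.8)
The plan is to prove the two containments $(f)\subseteq I$ and $I\subseteq(f)$ separately, where $f=\ti{5}^{2}-P\ti{5}+Q$. Writing $\Pi$ also for the composite epimorphism $R[\ti{5}]\to\C[\X]$, the containment $(f)\subseteq I$ is immediate from the two standing congruences: equation \eqref{polyp2} gives $\Pi(P)=\Pi(\ti{5})+\Pi(\ti{-5})$, and the hypothesis on $Q$ gives $\Pi(Q)=\Pi(\ti{5})\Pi(\ti{-5})$, so that
\[
\Pi(f)=\Pi(\ti{5})^{2}-\bigl(\Pi(\ti{5})+\Pi(\ti{-5})\bigr)\Pi(\ti{5})+\Pi(\ti{5})\Pi(\ti{-5})=0 .
\]
Hence $f\in I$.

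For the reverse containment I would use that $f$ is \emph{monic} of degree $2$ in $\ti{5}$ over the coefficient ring $R$. Monic division writes any $g\in R[\ti{5}]$ as $g=qf+r$ with $r=a+b\,\ti{5}$ and $a,b\in R$; since $f\in I$ we have $g\in I$ iff $r\in I$, and as $\deg_{\ti{5}}r<2=\deg_{\ti{5}}f$ the relation $r\in(f)$ is equivalent to $r=0$. Everything therefore reduces to the claim that $a+b\,\ti{5}\in\ker\Pi$ forces $a=b=0$. A preliminary point is that $\Pi$ is injective on $R$: by Lemma \ref{gens}, $\C[\X]$ is generated as a $\Pi(R)$-algebra by the single element $\Pi(\ti{5})$, which is integral over $\Pi(R)$ because it is a root of the monic $f$; thus $\C[\X]$ is module-finite over $\Pi(R)$, and since $\dim\C[\X]=8$ this forces $\dim\Pi(R)=8$. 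The eight generators of $\Pi(R)$ are then algebraically independent, so $\ker(\Pi|_R)=0$.

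The decisive ingredient is an involution that fixes $\Pi(R)$ but moves $\Pi(\ti{5})$. Entrywise transpose of each matrix variable, $(\xb_1,\xb_2)\mapsto(\xb_1^{\,T},\xb_2^{\,T})$, is a morphism of $\R$ equivariant for the automorphism $g\mapsto(g^{-1})^{T}$ of $\G$; it therefore preserves $\C[\R]^{\G}=\C[\X]$ and induces a $\C$-algebra automorphism $\tau$ acting on traces by reversing the order of the letters. Since each of the eight generators of $\Pi(R)$ is a trace of a word of length at most $2$, reversal fixes all of them, whereas a short trace computation gives $\tau(\Pi(\ti{5}))=\Pi(\ti{-5})$. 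Applying $\tau$ to $\Pi(a)+\Pi(b)\Pi(\ti{5})=0$ and using that $\tau$ fixes $\Pi(a),\Pi(b)\in\Pi(R)$ yields $\Pi(a)+\Pi(b)\Pi(\ti{-5})=0$; subtracting the two relations gives $\Pi(b)\bigl(\Pi(\ti{5})-\Pi(\ti{-5})\bigr)=0$. Because $\C[\X]$ is a domain and $\Pi(\ti{5})\neq\Pi(\ti{-5})$—witnessed by any representation whose commutator $M$ satisfies $\tr{M}\neq\tr{M^{-1}}$, i.e.\ $\tr{M}\neq\tfrac12(\tr{M}^{2}-\tr{M^{2}})$—we get $\Pi(b)=0$, hence $\Pi(a)=0$, and injectivity gives $a=b=0$.

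I expect the main obstacle to be exactly the inequality $\Pi(\ti{5})\neq\Pi(\ti{-5})$ together with its structural meaning: the content of the lemma is that $\tr{\xb_1\xb_2\xb_1^{-1}\xb_2^{-1}}$ is not a polynomial, nor even a rational function, in the other eight generators, so that $f$ is genuinely irreducible of degree $2$ rather than splitting over $\mathrm{Frac}(R)$. The transpose involution $\tau$ packages this cleanly by realizing the nontrivial deck transformation of the degree-two extension $\C[\X]\supseteq\Pi(R)$, which is what makes $\{1,\Pi(\ti{5})\}$ a free $\Pi(R)$-basis and collapses $I$ to the single relation $f$. A more computational alternative would be to verify directly that the discriminant $P^{2}-4Q$ is not a square in $\C(\ti{1},\dots,\ti{-4})$, but the involution argument has the advantage of not requiring an explicit formula for $Q$.
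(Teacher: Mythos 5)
Your architecture is sound and genuinely different from the paper's. The paper establishes principality abstractly---$\X$ is irreducible of dimension $8$, so $I$ is a height-one prime in the U.F.D.\ $R[\ti{5}]$, hence principal---and then eliminates the possibilities that the generator has degree $0$ or $1$ in $\ti{5}$: the former via Krull's dimension theorem together with restriction to $\mathrm{SL}(2,\C)\times\{1\}$, the latter via a pair of explicit representations that agree on $R$ but not on $\ti{5}$. You bypass principality entirely: division by the monic quadratic $f$ reduces the lemma to showing that a linear remainder $a+b\,\ti{5}\in\ker\Pi$ vanishes; your integrality-plus-dimension argument for the injectivity of $\Pi|_R$ is a cleaner route to the algebraic independence of the eight generators than the paper's; and the transpose involution---which fixes the eight generators of $\Pi(R)$ pointwise, because reversing a word of length at most two is a cyclic permutation, while exchanging $\Pi(\ti{5})$ and $\Pi(\ti{-5})$---is a genuinely new ingredient (the paper's dihedral symmetries $\D$ preserve $R$ only setwise, not pointwise, so they could not play this role). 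It exhibits the deck transformation of the $2$-to-$1$ map $\X\to\C^8$ and explains conceptually why the defining relation is quadratic.

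There is, however, one genuine gap, at exactly the spot you flag yourself: the inequality $\Pi(\ti{5})\neq\Pi(\ti{-5})$ is asserted, not proved. Saying it is ``witnessed by any representation whose commutator $M$ satisfies $\tr{M}\neq\tr{M^{-1}}$'' merely restates the claim as the existence of such a representation, and that existence is the entire analytic content of the lemma: if $\tr{M}=\tr{M^{-1}}$ held identically---as it does on the $\mathrm{SL}(2,\C)\times\{1\}$ locus and on pairs of diagonal matrices---then $2\ti{5}-P$ would lie in $I$ and the lemma would be false. So a witness must be produced, and it cannot come from the degenerate loci where the computation is easiest. The paper's proof supplies precisely this: its explicit pair $\rho_1,\rho_2$ agrees on all eight generators of $R$, hence on $P=\ti{5}+\ti{-5}$, but disagrees on $\ti{5}$; consequently at least one of the two satisfies $\ti{5}\neq P/2$, i.e.\ $\ti{5}\neq\ti{-5}$, which is the witness your subtraction argument needs. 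Importing that single evaluation (or carrying out the direct calculation on any sufficiently generic pair of matrices) completes your proof; as written, though, the proposal leaves unestablished the one fact that all of its machinery is designed to exploit.
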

\begin{proof}
The following argument is an adaptation of one found in \cite{N}.

Certainly, $t_{(5)}^2-Pt_{(5)}+Q\in I$ for it maps into $R[t_{(5)},t_{(-5)}]/\ker(\Pi)$ to the coset
representative $t_{(5)}^2-(t_{(5)}+t_{(-5)})t_{(5)}+t_{(5)}t_{(-5)}=0$.   

On the other hand, observe $$R[t_{(5)}]/I\cong R[t_{(5)}, t_{(-5)}]/\ker(\Pi)\cong \C[\X],$$ the
dimension of $\X$ is $8$, and $R[t_{(5)}]/I$ has at most $9$ generators.   Then it must be the case that $I$ is principally generated since $R[\ti{5}]$ is a 
U.F.D., and thus a co-dimension $1$ irreducible subvariety of $\C^9$ must be given by one equation (see \cite{S} page $69$).  Moreover, $I$ is 
non-zero since otherwise the resulting dimension would necessarily be too large.

Seeking a contradiction, suppose there exists a polynomial identity comprised of only elements of $R$.  Then Krull's dimension theorem (see page $68$ in \cite{S}) implies
$t_{(5)}$ is free.  In other words, given any specialization of the generators of $R$, $t_{(5)}$ is not determined.  Consider 
$(\mathrm{SL}(2,\C)\times \{1\})^2\subset \G^2$; that is, the matrices of the form $\left(
\begin{array}{ccc}
a & b & 0\\
c & d & 0\\
0& 0 & 1\\
\end{array}\right)$ so $ad-bc=1$.  Then by restricting to pairs of such matrices, we deduce that $$\tr{\xb_1\xb_2\xb_1^{-1}\xb_2^{-1}}=
\tr{\xb_2\xb_1\xb_2^{-1}\xb_1^{-1}},$$ since for all $g\in \mathrm{SL}(2,\C),$ $\tr{g}=\tr{g^{-1}}$ (see \cite{LP}).
Then equation $\eqref{polyp2}$ becomes $$t_{(5)}=P/2,$$ which is decidedly not free of the
generators of $R$.  Thus, the generators of $R$ are algebraically independent in $R[t_{(5)}]/I$.

Since $I$ is principal and contains a monic quadratic over $R$, its generator is expression $\eqref{idealequ}$, or a
factor thereof.  We have just shown that there are no degree zero relations, with respect to $t_{(5)}$.  However,
if $I$ is generated by a linear polynomial over $R$ then $t_{(5)}$ is determined by the generators of
$R$.  However this in turn would imply that all
representations who agree by evaluation in $R$ also agree by evaluation under $t_{(5)}$.

Consider the representations 

\begin{tabular}{ccc}
$\F_2 \stackrel{\rho_1}{\longrightarrow} \G$ & &$\F_2 \stackrel{\rho_2}{\longrightarrow} \G$\\
$\xt_1 \longmapsto
\left(
\begin{array}{ccc}
a & 0 & 0\\
0 & b & 0\\
0& 0 & 1/ab\\
\end{array}\right)$ &\ \text{and}\  &
$\xt_1 \longmapsto
\left(
\begin{array}{ccc}
a & 0 & 0\\
0 & b & 0\\

0& 0 & 1/ab\\
\end{array}\right)$\\
$\xt_2 \longmapsto
\frac{1}{4^{1/3}}\left(
\begin{array}{ccc}
1 & 1 & -1\\
1 & -1 & 1\\
-1& -1 & -1\\
\end{array}\right)$
& & $\xt_2 \longmapsto
\frac{1}{4^{1/3}}\left(
\begin{array}{ccc}
1 &-1 & 1\\
-1 & -1 & -1\\
1& 1 & -1\\
\end{array}\right).$
\end{tabular}
\\ \\
It is a direct calculation to verify that they agree upon evaluation in $R$ but disagree under $t_{(5)}$.
\end{proof}

Lemmas \ref{gens} and \ref{ideallemma} together imply the following theorem whose result, in part, is given as an 
example of the powerful graphical techniques
developed in \cite{Si} and was shown to be true before that by \cite{T}.

\begin{theorem}\label{ranktwo}
$\G^{\times 2}\aq\G$ is isomorphic to an affine degree $6$ hyper-surface in $\C^9$, which maps onto $\C^8$.
\end{theorem}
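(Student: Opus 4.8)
The plan is to read the theorem off Lemmas \ref{gens} and \ref{ideallemma}, the only genuine remaining task being to verify the hypothesis of Lemma \ref{ideallemma}: that the product $\ti{5}\ti{-5}$ lies in the subring $R$. First I would record that, by Lemma \ref{gens}, the nine listed traces generate $\C[\X]$, so $\Pi$ is surjective and $\X$ embeds as a closed subvariety of $\mathrm{Spec}(R[\ti{5},\ti{-5}])$. Equation \eqref{polyp2} gives the relation $\ti{5}+\ti{-5}=P$ modulo $\ker\Pi$, which eliminates $\ti{-5}$ and cuts the ambient space down to $\C^9$ with coordinates $\ti{1},\ti{-1},\dots,\ti{-4},\ti{5}$.

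Next I would produce the polynomial $Q$. Abstractly its existence is forced by symmetry: the substitution $\xb_1\leftrightarrow\xb_2$ induces an involution of $\C[\X]$ that permutes the eight generators of $R$ among themselves while interchanging $\ti{5}$ and $\ti{-5}$, so both elementary symmetric functions $\ti{5}+\ti{-5}$ and $\ti{5}\ti{-5}$ are fixed; combined with the $\mathbb{Z}_3\times\mathbb{Z}_3$-grading this confines $\ti{5}\ti{-5}$ to the span of monomials in $R$. Concretely I would expand $\tr{\xb_1\xb_2\xb_1^{-1}\xb_2^{-1}}\,\tr{\xb_2\xb_1\xb_2^{-1}\xb_1^{-1}}$ and drive it into the eight basic generators using the fundamental trace identities \eqref{fund1} and \eqref{polyp1} together with the power-reduction \eqref{powerreduce}, arriving at an explicit $Q\in R$ of total degree $6$. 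This is the computational heart of the argument and the step I expect to be the main obstacle: it is a long but mechanical reduction in which one must track degrees carefully to confirm that $Q$ is exactly degree $6$ rather than higher.

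With $Q$ in hand, Lemma \ref{ideallemma} applies verbatim: the kernel $I$ of $R[\ti{5}]\twoheadrightarrow\C[\X]$ is principal, generated by $\ti{5}^2-P\ti{5}+Q$, whence $$\C[\X]\cong R[\ti{5}]/\big(\ti{5}^2-P\ti{5}+Q\big),$$ exhibiting $\X$ as the hypersurface cut out by this single polynomial in $\C^9$. For the degree claim I would observe that $\deg P=4$ (from the term $\ti{1}\ti{-1}\ti{2}\ti{-2}$), so $\deg(P\ti{5})=5$, while $\deg Q=6$ from the previous step; the total degree of the defining polynomial is therefore $\max\{2,5,6\}=6$, a degree-$6$ hypersurface. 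Since $\X$ is an irreducible affine variety, the generator is prime and the hypersurface is reduced and irreducible.

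Finally, for the surjection onto $\C^8$ I would use that the defining polynomial is monic of degree $2$ in $\ti{5}$. This makes $R[\ti{5}]/(\ti{5}^2-P\ti{5}+Q)$ a free $R$-module of rank $2$, so the inclusion $R\hookrightarrow\C[\X]$ is an injective integral extension, and on spectra it yields a surjection $\X\to\mathrm{Spec}(R)=\C^8$ (the generators of $R$ being algebraically independent by the argument inside Lemma \ref{ideallemma}). Equivalently and elementarily: for any point $\mathbf{a}\in\C^8$, the monic quadratic $\ti{5}^2-P(\mathbf{a})\ti{5}+Q(\mathbf{a})$ has a root over the algebraically closed field $\C$, producing a preimage in $\X$. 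This map is exactly the forgetful projection dropping the coordinate $\ti{5}$, and it completes the proof.
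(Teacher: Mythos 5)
Your overall skeleton is the paper's own: Lemma \ref{gens} makes $\Pi$ surjective, equation \eqref{polyp2} eliminates $\ti{-5}$, Lemma \ref{ideallemma} identifies the ideal as $\big(\ti{5}^2-P\ti{5}+Q\big)$, the degree count is $\max\{2,5,6\}=6$, and your surjectivity argument (solve the monic quadratic over each point of $\C^8$, equivalently going-up for the module-finite extension $R\hookrightarrow\C[\X]$) is exactly the paper's closing argument. The genuine gap is at the step you yourself identify as the crux: proving that there exists $Q\in R$ with $Q-\ti{5}\ti{-5}\in\ker(\Pi)$. The paper devotes all of Section \ref{detq} (Fact \ref{fact} and Lemma \ref{qlemma}, following \cite{N}) to this point, and neither of your two arguments for it holds up.

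The abstract symmetry argument is invalid. Invariance under $\xb_1\leftrightarrow\xb_2$ (or under all of $\D$) together with homogeneity in the $\mathbb{Z}_3\times\mathbb{Z}_3$-grading does not confine an element of $\C[\X]$ to $R$. The commutator word has weight $(0,0)$ --- each letter cancels against its inverse --- so $\ti{5}$, $\ti{-5}$ and all their products lie in the \emph{same} graded piece as the $R$-monomials $\ti{1}\ti{-1}$, $\ti{3}\ti{-3}$, $\ti{1}\ti{2}\ti{-3}$, etc.; the grading separates nothing here. And invariance is necessary but nowhere near sufficient: $\ti{5}+\ti{-5}$ is equally a symmetric function of $\ti{\pm5}$, yet the fact that \emph{it} lies in $R$ is precisely the nontrivial trace identity \eqref{polyp2}, not a formal consequence of symmetry. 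Indeed, granting the structure theorem (so that $\{1,\ti{5}\}$ is an $R$-module basis of $\C[\X]$), the element $b\cdot(2\ti{5}-P)$ with $b=(\ti{1}\ti{-1}-\ti{2}\ti{-2})(\ti{3}\ti{-3}-\ti{4}\ti{-4})$ is fixed by both $\tau$ and $\iota$ and is homogeneous of weight $(0,0)$, but it does not lie in $R$; so the inference you propose proves too much and is false. Your computational fallback does not close the hole either: equations \eqref{fund1}, \eqref{polyp1} and \eqref{powerreduce} reduce traces of single words, and applied to the product $\ti{5}\ti{-5}$ they yield, at best, expressions of the form $a+b\,\ti{5}$ with $a,b\in R$ holding \emph{modulo} $\ker(\Pi)$; you cannot conclude $b=0$ without already knowing the structure of $\ker(\Pi)$, which is exactly what Lemma \ref{ideallemma} --- itself conditional on $Q$ --- is meant to deliver, so the reasoning is circular unless the reduction is exhibited as a genuine trace identity, which you have not done. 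This is why the paper uses a different device: Fact \ref{fact} asserts that the $9\times 9$ matrix $\Lambda=\big(\mathbb{B}(A_i,B_j)\big)$, $\mathbb{B}(A,B)=3\tr{AB}-\tr{A}\tr{B}$, is singular for \emph{every} choice of the $A_i,B_j$; evaluating $\det(\Lambda)=0$ on the nine words $\xb_1,\xb_2,\xb_1^{-1},\xb_2^{-1},\xb_1\xb_2,\xb_2\xb_1,\xb_1\xb_2^{-1},\xb_2^{-1}\xb_1,\xb_2\xb_1^{-1}$ produces a true polynomial identity $P_1\ti{5}^2+P_2\ti{5}+P_3=0$ with $P_i\in R$, and the verifications $P_1\neq0$ and $P_2=-P\cdot P_1$ then give $\ti{5}\ti{-5}=P_3/P_1\in R$, with the explicit degree-$6$ polynomial \eqref{q} emerging on simplification. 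Without this step (or an equally concrete substitute), your proof of Theorem \ref{ranktwo} is incomplete at its load-bearing point.
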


\begin{proof}
Once we explicitly determine $Q$, it having degree $6$ will be apparent.
It then remains to show that $\X\rightarrow \C^8$ is a surjection.  To this end, let $(z_1-\zeta_1,...,z_8-\zeta_8)$ be a maximal ideal in the coordinate ring of 
$\C^8$.
Moreover, let $\zeta_9$ be defined to be a solution to $t^2-P(\zeta_1,...,\zeta_8)t+Q(\zeta_1,...,\zeta_8)=0$.  Then
$(t_{(1)}-\zeta_1,t_{(-1)}-\zeta_2,...,t_{(-4)}-\zeta_8,t_{(5)}-\zeta_9)+I$ is a maximal
ideal in $\C[\X]$, and so all maximal ideals of $\C[\C^8]$ are images of such in $\C[\X]$.
\end{proof}

\section{Singular Locus of $\X$.}

The surjection $\X\to \C^8$ is generically $2$-to-$1$, that is there are exactly two solutions to $$t^2-P(\zeta_1,...,\zeta_8)t+Q(\zeta_1,...,\zeta_8)=0$$ for every point 
in $\C^8$ except where $P^2-4Q=0$.  In this case, $$0=(t_{(5)}+t_{(-5)})^2-4t_{(5)}t_{(-5)}=(t_{(5)}-t_{(-5)})^2$$ which implies $t_{(5)}=t_{(-5)}=P/2$.  In $\X$, on the 
other hand, $t_{(5)}=P/2$ implies that $P^2-4Q=0$.  Let $\frak{L}$ denote the locus of solutions to $P^2-4Q=0$ in $\X$, which is a closed subset of $\X$. 

It is readily observed that the $t_{(5)}$ partial derivative of $t_{(5)}^2-Pt_{(5)}+Q$ is zero if and only if $t_{(5)}=P/2$.  The singular set in $\X$, denoted by $\frak{J}$, 
is the closed subset cut out by the Jacobian ideal; that is, the ideal generated by the formal partial derivatives of $\ti{5}^2-P\it{5}+Q$.  Thus 
$\frak{J}\subset\frak{L}$.  In the proof of Lemma $\ref{ideallemma}$, we observed that $(\mathrm{SL}(2,\C)\times\{1\})^{\times 2}\aq \G \subset
\frak{L}$.  Additionally, since matrices of the form
$\left(
\begin{array}{ccc}
a & 0 & 0\\
0 & b & 0\\
0& 0 & 1/ab\\
\end{array}\right)$ commute, restricting to pairs of such matrices enforces the relation
$$\tr{\xb_1\xb_2\xb^{-1}_1\xb^{-1}_2}=3=\tr{\xb_2\xb_1\xb^{-1}_2\xb^{-1}_1}.$$  Let $(\C^*)^2$ denote the subset of such matrices in $\G$.
Consequently, $(\C^*)^4\aq \G\subset \frak{L}$ as well.  We claim both sets satisfy all the generators of the Jacobian ideal, and so are singular.

Explicitly, the Jacobian ideal is generated by the polynomials $-\ti{5}\dP{i}+\dQ{i}$ for $1\leq |i|\leq 4$ and $2\ti{5}-P$.  Using the formulas for $P$ and $Q$ 
(see Chapter \ref{detq}) we derive:
\begin{align*}
\dP{1}&=-\ti{-4} \ti{-2} + \ti{-1} - \ti{-3} \ti{2} + \ti{-2} \ti{-1} \ti{2}\\
\dP{2}&=\ti{-2} - \ti{-3} \ti{1} + \ti{-2} \ti{-1} \ti{1} - \ti{-1} \ti{4}\\
\dP{3}&= \ti{-3} - \ti{-2} \ti{-1}\\
\dP{4}&=\ti{-4} - \ti{-1} \ti{2}
\end{align*}

\begin{align*}
\dP{-4}&=-\ti{-2} \ti{1} + \ti{4}\\
\dP{-3}&=-\ti{1} \ti{2} + \ti{3}\\
\dP{-2}&=-\ti{-4} \ti{1} + \ti{2} + \ti{-1} \ti{1} \ti{2} -\ti{-1} \ti{3}\\
\dP{-1}&=\ti{1} + \ti{-2} \ti{1} \ti{2} - \ti{-2} \ti{3} - \ti{2} \ti{4}
\end{align*}

\begin{align*}
\dQ{1}=&3\ti{-4} \ti{-2} + \ti{-3} \ti{-2}^2 - 6 \ti{-1} - 
      \ti{-2}^3 \ti{-1} + 2 \ti{-4} \ti{-3} \ti{1} - \\
      &2 \ti{-4} \ti{-2} \ti{-1} \ti{1} + 3 \ti{1}^2 + 
      3 \ti{-3} \ti{2} - \ti{-4} \ti{-2}^2 \ti{2} + \\
      &\ti{-2} \ti{-1} \ti{2} - 2 \ti{-3} \ti{-1} \ti{1} \ti{2} + 
      2 \ti{-2} \ti{-1}^2 \ti{1} \ti{2} - 3 \ti{-2} \ti{1}^2 \ti{2} +\\ 
      &\ti{-4} \ti{2}^2 - \ti{-3} \ti{-2} \ti{2}^2 + 
      \ti{-2}^2 \ti{-1} \ti{2}^2 - \ti{-1} \ti{2}^3 +\\ 
      &\ti{-4}^2 \ti{3} + \ti{-3} \ti{-1} \ti{3} - 
      \ti{-2} \ti{-1}^2 \ti{3} + 2 \ti{-2} \ti{1} \ti{3} - \\
      &\ti{-4} \ti{-1} \ti{2} \ti{3} + 2 \ti{1} \ti{2}^2 \ti{3} - 
      2 \ti{2} \ti{3}^2 + \ti{-3}^2 \ti{4} + \ti{-4} \ti{-1} \ti{4} -\\ 
      &\ti{-3} \ti{-2} \ti{-1} \ti{4} + 2 \ti{-2}^2 \ti{1} \ti{4} - 
      \ti{-1}^2 \ti{2} \ti{4} + 2 \ti{1} \ti{2} \ti{4} - 3 \ti{3} \ti{4} - \\
      &\ti{-2} \ti{2} \ti{3} \ti{4} - 2 \ti{-2} \ti{4}^2
\end{align*}
 
\begin{align*}
\dQ{2}=&\ti{-4} \ti{-3}^2 - 6\ \ti{-2} - 2 \ti{-4}^2 \ti{-1} - 
      \ti{-4} \ti{-3} \ti{-2} \ti{-1} + \ti{-3} \ti{-1}^2 - \\
      &\ti{-2} \ti{-1}^3 + 3 \ti{-3} \ti{1} - 
      \ti{-4} \ti{-2}^2 \ti{1} + \ti{-2} \ti{-1} \ti{1} - \\
      &\ti{-3} \ti{-1} \ti{1}^2 + \ti{-2} \ti{-1}^2 \ti{1}^2 - 
      \ti{-2} \ti{1}^3 + 2 \ti{-4} \ti{-1}^2 \ti{2} + \\
      &2 \ti{-4} \ti{1} \ti{2} - 2 \ti{-3} \ti{-2} \ti{1} \ti{2} + 
      2 \ti{-2}^2 \ti{-1} \ti{1} \ti{2} + 3 \ti{2}^2 -\\ 
      &3 \ti{-1} \ti{1} \ti{2}^2 - 3 \ti{-4} \ti{3} + 
      \ti{-3} \ti{-2} \ti{3} - \ti{-2}^2 \ti{-1} \ti{3} -\\ 
      &\ti{-4} \ti{-1} \ti{1} \ti{3} + 2 \ti{-1} \ti{2} \ti{3} + 
      2 \ti{1}^2 \ti{2} \ti{3} - 2 \ti{1} \ti{3}^2 +\\ 
      &\ti{-4} \ti{-2} \ti{4} + 3 \ti{-1} \ti{4} - 
      \ti{-1}^2 \ti{1} \ti{4} + \ti{1}^2 \ti{4} + 2 \ti{-3} \ti{2} \ti{4} -\\ 
      &2 \ti{-2} \ti{-1} \ti{2} \ti{4} - \ti{-2} \ti{1} \ti{3} \ti{4} + 
      \ti{3} \ti{4}^2
\end{align*}

\begin{align*}
\dQ{3}=&-6 \ti{-3} + \ti{-4} \ti{-2}^2 + 
      3 \ti{-2} \ti{-1} + \ti{-4}^2 \ti{1} + \\
      &\ti{-3} \ti{-1} \ti{1} - \ti{-2} \ti{-1}^2 \ti{1} + 
      \ti{-2} \ti{1}^2 - 3 \ti{-4} \ti{2} + \ti{-3} \ti{-2} \ti{2} - \\
      &\ti{-2}^2 \ti{-1} \ti{2} - \ti{-4} \ti{-1} \ti{1} \ti{2} + 
      \ti{-1} \ti{2}^2 + \ti{1}^2 \ti{2}^2 + 2 \ti{-4} \ti{-1} \ti{3} - \\
      &4 \ti{1} \ti{2} \ti{3} + 3 \ti{3}^2 + \ti{-4} \ti{-3} \ti{4} + 
      \ti{-1}^2 \ti{4} - 3 \ti{1} \ti{4} - \ti{-2} \ti{1} \ti{2} \ti{4} + \\
      &2 \ti{-2} \ti{3} \ti{4} + \ti{2} \ti{4}^2
\end{align*} 

\begin{align*}
\dQ{4}=&-6 \ti{-4} - 3 \ti{-3} \ti{-2} + \ti{-2}^2 \ti{-1} + 
      \ti{-3}^2 \ti{1} + \ti{-4} \ti{-1} \ti{1} - \\
      &\ti{-3} \ti{-2} \ti{-1} \ti{1} + \ti{-2}^2 \ti{1}^2 + 
      \ti{-4} \ti{-2} \ti{2} + 3 \ti{-1} \ti{2} -\\ 
      &\ti{-1}^2 \ti{1} \ti{2} + \ti{1}^2 \ti{2} + \ti{-3} \ti{2}^2 - 
      \ti{-2} \ti{-1} \ti{2}^2 + \ti{-4} \ti{-3} \ti{3} + \\
      &\ti{-1}^2 \ti{3} - 3 \ti{1} \ti{3} - \ti{-2} \ti{1} \ti{2} \ti{3} + 
      \ti{-2} \ti{3}^2 + 2 \ti{-3} \ti{-1} \ti{4} -\\ 
      &4 \ti{-2} \ti{1} \ti{4} + 2 \ti{2} \ti{3} \ti{4} + 3 \ti{4}^2.
\end{align*}

Let $\frak{i}$ be the polynomial mapping that sends $\ti{i}\mapsto \ti{-i}$, that is induced by the automorphism of $\F_2$ which sends $\xt_i\mapsto \xt_i^{-1}$.  Then it is 
readily observed that $\dP{i}=\frak{i}\Big(\dP{-i}\Big)$.
By working out the other partials of $Q$ one observes this same symmetry.  So we express the other four partials of $Q$ by:
$$\dQ{-1}=\frak{i}\Big(\dQ{1}\Big),
\ \dQ{-2}=\frak{i}\Big(\dQ{2}\Big),\ \dQ{-3}=\frak{i}\Big(\dQ{3}\Big),\ \dQ{-4}=\frak{i}\Big(\dQ{4}\Big),$$
which may be verified with {\it Mathematica}, or by hand.

To show that $(\mathrm{SL}(2,\C)\times\{1\})^{\times 2}\aq \G$ is contained in the singular set of $\X$, we will find the following proposition useful.
\begin{prop}
$(\mathrm{SL}(2,\C)\times\{1\})^{\times 2}\aq \G$ is contained in the algebraic set cut out by the following equations:
\begin{align}
\ti{-i}=&\ti{i}\ \mathrm{for}\ 1\leq |i| \leq 4 \label{sl21}\\
\ti{4}=&\ti{1}\ti{2} - \ti{3} - \ti{1} - \ti{2} + 3 \label{sl22}\\
\ti{5}=&3 - 3\ti{1} + \ti{1}^2 - 3\ti{2} + \ti{1}\ti{2} + \ti{2}^2 - 3\ti{3} + \ti{1}\ti{3} \nonumber\\
&+ \ti{2}\ti{3} - \ti{1}\ti{2}\ti{3} + \ti{3}^2. \label{sl23}
\end{align}
\end{prop}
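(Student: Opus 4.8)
The plan is to parametrize the points of $(\mathrm{SL}(2,\C)\times\{1\})^{\times 2}\aq\G$ concretely and reduce every generating trace to the trace of a word in $2\times 2$ determinant-one matrices, where the classical Fricke identities apply. First I would write a representative pair as
$$\xb_1=\left(\begin{array}{cc} A & 0\\ 0 & 1\end{array}\right),\qquad \xb_2=\left(\begin{array}{cc} B & 0\\ 0 & 1\end{array}\right),$$
with $A,B\in\mathrm{SL}(2,\C)$ the upper-left blocks. Since products and inverses respect this block structure and the lower-right entry stays $1$, each generating trace is the corresponding $2\times 2$ trace plus one; in particular
$$\tr{A}=\ti{1}-1,\quad \tr{B}=\ti{2}-1,\quad \tr{AB}=\ti{3}-1,\quad \tr{AB^{-1}}=\ti{4}-1,$$
which are the only reductions I need.

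Next I would invoke three standard facts for $g,A,B\in\mathrm{SL}(2,\C)$, all consequences of the two-dimensional Cayley--Hamilton relation, which gives $g+g^{-1}=\tr{g}\id$: namely $\tr{g}=\tr{g^{-1}}$, the product identity $\tr{AB}+\tr{AB^{-1}}=\tr{A}\tr{B}$, and the commutator identity
$$\tr{ABA^{-1}B^{-1}}=\tr{A}^2+\tr{B}^2+\tr{AB}^2-\tr{A}\tr{B}\tr{AB}-2.$$
The first two are immediate from that relation; the last is the classical Fricke identity and is the only nonroutine ingredient, though it too follows by expanding the commutator word with $A^{-1}=\tr{A}\id-A$ and $B^{-1}=\tr{B}\id-B$.

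With these in hand the three displayed families follow by bookkeeping. For \eqref{sl21}, applying $\tr{g}=\tr{g^{-1}}$ together with cyclicity to $g=A,B,AB,A^{-1}B$ yields $\tr{A^{-1}}=\tr{A}$, $\tr{B^{-1}}=\tr{B}$, $\tr{A^{-1}B^{-1}}=\tr{(AB)^{-1}}=\tr{AB}$, and $\tr{A^{-1}B}=\tr{B^{-1}A}=\tr{AB^{-1}}$, that is $\ti{-i}=\ti{i}$ for $1\le|i|\le4$. For \eqref{sl22}, I would rewrite the product identity as $\tr{AB^{-1}}=\tr{A}\tr{B}-\tr{AB}$ and substitute the block reductions, the shifts by one collecting into the additive constant. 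For \eqref{sl23}, I would use $\ti{5}=\tr{\xb_1\xb_2\xb_1^{-1}\xb_2^{-1}}=\tr{ABA^{-1}B^{-1}}+1$ and feed the block reductions into the Fricke identity, obtaining $\ti{5}=\tr{A}^2+\tr{B}^2+\tr{AB}^2-\tr{A}\tr{B}\tr{AB}-1$ before expanding $(\ti{1}-1)$, $(\ti{2}-1)$, $(\ti{3}-1)$.

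I expect no genuine obstacle here: once the Fricke commutator identity is granted, what remains is the routine polynomial algebra of expanding the shifted generators, and the only point requiring care is the consistent use of $\tr{g}=\tr{g^{-1}}$ to match the three-dimensional inverse traces $\ti{-i}$ to their two-dimensional counterparts.
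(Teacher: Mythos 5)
Your proof is correct. For the first two families of equations it follows the paper's argument essentially verbatim: both you and the paper obtain \eqref{sl21} from the fact that every $\mathrm{SL}(2,\C)$ matrix is conjugate to its inverse, and \eqref{sl22} from the two-dimensional Cayley--Hamilton consequence $\tr{AB^{-1}}=\tr{A}\tr{B}-\tr{AB}$, combined with the trace shift $\tr{\xb_i}=\tr{A}+1$ coming from the block embedding. The genuine difference is in \eqref{sl23}. The paper never invokes the Fricke commutator identity: it uses the observation (already made in the proof of Lemma \ref{ideallemma}) that on this locus $\ti{5}=\ti{-5}$, so that the $\SL$ relation $P=\ti{5}+\ti{-5}$ of equation \eqref{polyp2} forces $\ti{5}=P/2$, and then it substitutes \eqref{sl21} and \eqref{sl22} into $P/2$ to land on \eqref{sl23}. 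You instead compute $\ti{5}=\tr{ABA^{-1}B^{-1}}+1$ directly from the classical identity $\tr{ABA^{-1}B^{-1}}=\tr{A}^2+\tr{B}^2+\tr{AB}^2-\tr{A}\tr{B}\tr{AB}-2$, and your expansion does reproduce \eqref{sl23} exactly. Your route is more self-contained: it needs only $2\times 2$ trace calculus, is independent of the polynomial $P$ and of equation \eqref{polyp2}, and therefore doubles as an independent consistency check that $P/2$ restricted to this locus agrees with the Fricke polynomial. The paper's route buys economy inside its own framework: once $P$ and \eqref{polyp2} are in hand, the only two-dimensional input needed is $\tr{g}=\tr{g^{-1}}$, with no separate commutator computation. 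One last remark: the paper's proof also contains a dimension count (the locus has dimension $3$, equal to that of $\mathrm{SL}(2,\C)^{\times 2}\aq\mathrm{SL}(2,\C)$, citing \cite{LP}), whose role is to show that these equations cut out the image exactly rather than merely contain it; since the proposition as stated asserts only containment, your omission of that step loses nothing.
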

\begin{proof}
Let $\mathrm{SL}(2,\C)\times\{1\}$ be hereafter denoted $\mathrm{SL}_3(\mathrm{SL}(2,\C))$, and let $A\in\mathrm{SL}(2,\C)$ correspond to 
$\tilde{A}\in\mathrm{SL}_3(\mathrm{SL}(2,\C))$.  Since 
the normal form of such a matrix, conjugating in $\G$, may be achieved by 
restricting the conjugation action to $\mathrm{SL}_3(\mathrm{SL}(2,\C))$, we know that $\mathrm{SL}_3(\mathrm{SL}(2,\C))^{\times 2}\aq \G$ has the same algebraic dimension as
$\mathrm{SL}(2,\C)^{\times 2}\aq \mathrm{SL}(2,\C)$.  It is shown in \cite{LP} that this dimension is $3$.  So if the above equations are satisfied by 
$\mathrm{SL}_3(\mathrm{SL}(2,\C))$ then $\mathrm{SL}_3(\mathrm{SL}(2,\C))^{\times 2}\aq \G$ is not cut out by any further equations.

Since any matrix in $\mathrm{SL}(2,\C)$ is conjugate to its inverse, it follows that $$\tr{\tilde{A}^{-1}}=\tr{\widetilde{A^{-1}}}=\tr{A^{-1}}+1=\tr{A}+1=\tr{\tilde{A}}.$$  
Hence equations $\eqref{sl21}$ are satisfied.  Next, using the $\mathrm{SL}(2,\C)$ identity $$A^2-\tr{A}A+\id=0,$$ one easily derives $\tr{AB^{-1}}=\tr{A}\tr{B}-\tr{AB}.$  It 
then follows
\begin{align*}
\tr{\tilde{A}\tilde{B}^{-1}}&=\tr{\tilde{A}\widetilde{B^{-1}}}=\tr{\widetilde{AB^{-1}}}\\
&=\tr{AB^{-1}}+1=\tr{A}\tr{B}-\tr{AB}+1\\
&=(\tr{\tilde{A}}-1)(\tr{\tilde{B}}-1)-(\tr{\widetilde{AB}}-1)+1\\
&=\tr{\tilde{A}}\tr{\tilde{B}}-\tr{\tilde{A}\tilde{B}}-\tr{\tilde{A}}-\tr{\tilde{B}}+3.
\end{align*}  
Hence, equation $\eqref{sl22}$ is satisfied.  Lastly, substituting equations $\eqref{sl21}$ and equation $\eqref{sl22}$ into the identity $\ti{5}=P/2$ we conclude equation 
$\eqref{sl23}$.
\end{proof}

Now, substituting these equations directly into the generators of the Jacobian ideal results in all generators reducing identically to $0$.  Hence 
$\mathrm{SL}_3(\mathrm{SL}(2,\C))^{\times 2}\aq \G\subset \frak{J}$.

Returning our attention to $(\C^*)^{\times 4}$, we observe that diagonal matrices are already in normal form.  So we simply evaluate the generators of the Jacobian ideal at 
pairs of diagonal matrices in $\G\times\G$.  Doing so again results in all generators reducing identically to $0$, and allows us to conclude $(\C^*)^{\times 4}\aq 
\G\subset \frak{J}$.  Both of these computations, due to the large number of variables, was completed using {\it Mathematica}.

In fact these are both prototypical examples.  We have already seen that in general, if $\rho$ is singular, then its orbit has positive-dimensional isotropy.  In the case 
of a free group of rank $1$, there are no singular points in the quotient and so the identity, which has maximal isotropy, remains smooth.  Hence the converse is not 
generally true.  In the case of a free group of rank $2$, the situation is much better.

Let $\mathrm{GL}(2,\C)\times \C^*$ be the subset of $\G$ consisting of elements of the form $$\left(
\begin{array}{ccc}
a & b & 0\\  
c & d & 0\\
0& 0 & \frac{1}{ad-bc}\\   
\end{array}\right)$$ so $ad-bc\not=0$.

Using {\it Mathematica} we verify that $(\mathrm{GL}(2,\C)\times \C^*)^{\times 2}\aq \G$ is singular, after verifying that $\ti{5}=P/2$ on this subset.  However, any 
completely reducible representation  that is not irreducible is conjugate to an element in $(\mathrm{GL}(2,\C)\times \C^*)^{\times 2}$ since there must be a shared 
eigenvector with respect to its matrix variables.  Moreover, $(\mathrm{GL}(2,\C)\times \C^*)^{\times 2}\aq \G$ has the same algebraic dimension as 
$\mathrm{GL}(2,\C)^{\times 2}\aq \mathrm{GL}(2,\C)$, which is $5$ (see \cite{DF}).  Since the branching locus has dimension $7$ they are not equal, and in fact we have shown
that a representation in $\X$ is singular if and only if its orbit has positive-dimensional isotropy.  This further implies that the set $\mathcal{X}=\frak{X}-\frak{J}$ is 
connected, since the complement of the branching locus in $\X$ divides it into two isomorphic open subsets (sheets).  Since the 
singular set is contained in the branching set, each sheet is smooth.  Thus there is a path between the sheets going through $\frak{L}-\frak{J}$.

As a final note, we give an example of such an element (actually we give a $2$-dimensional parameterization of $\frak{L}-\frak{J}$):

\begin{tabular}{c}
$\F_2 \stackrel{\rho}{\longrightarrow} \G$ \\
$\xt_1 \longmapsto
\left(
\begin{array}{ccc}
a & 0 & 0\\
0 & a & 0\\
0& 0 & 1/a^2\\
\end{array}\right)$

$\xt_2 \longmapsto
\frac{c^{1/3}}{4^{1/3}}\left(
\begin{array}{ccc}
1 & 1 & -1\\
1 & -1 & 1\\
-1/c& -1/c & -1/c\\
\end{array}\right),$
\end{tabular}

so long as $a^3\not=1$ and $c\not=0$.  

In the case, $c=1$, this follows since $\rho$ is the limit as $a-b\to 0$ with respect to $\rho_1$ in the proof of Lemma \ref{ideallemma}.  Since the limit enforces  
$\rho_1-\rho_2\to 0$, it must be the case that $\ti{5}\to P/2$, so $\rho$ is in $\frak{L}$.  Calculating the Jacobian relations we determine that all partial derivatives are 
$0$ except for 
\begin{align*}
-\ti{5}\dP{1}+\dQ{1}&=-\frac{(-1+a^3)^3}{4a^4}\\
-\ti{5}\dP{-1}+\dQ{-1}&=\frac{(-1+a^3)^3}{4a^5},
\end{align*}
which are clearly not always $0$.  The formulas for the partials and the analysis carry over exactly for any non-zero $c$, which can be verified by direct calculation.

\section{Determining $Q$}\label{detq}
For the proofs of Lemma $\ref{ideallemma}$ and subsequently Theorem $\ref{ranktwo}$ to be complete, it only remains 
to establish that there exists $Q\in R$ so $Q-t_{(5)}t_{(-5)}\in
\ker(\Pi)$.
Before doing so, we state and prove the following technical fact, which may be found in \cite{N}.

\begin{fact}\label{fact}
Define a bilinear form on the vector space of $n\times n$ matrices over $\C$
by $$\mathbb{B}(A,B)=n\tr{AB}-\tr{A}\tr{B}.$$
Then given vectors $A_1,...,A_{n^2},B_1,...,B_{n^2},$ the $n^2\!\times\! n^2$ matrix $\Lambda=\bigg(\mathbb{B}(A_i,B_j)\bigg)$
is singular.
\end{fact}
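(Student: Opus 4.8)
The plan is to exhibit $\mathbb{B}$ as the pull-back of the nondegenerate trace form along a linear endomorphism of the matrix algebra that fails to be injective, and then to bound the rank of $\Lambda$ by the rank of that endomorphism. First I would recall that the trace pairing $\langle A,B\rangle=\tr{AB}$ is a nondegenerate symmetric bilinear form on the $n^2$-dimensional space $M_n(\C)$ of $n\times n$ matrices, so that every linear functional on $M_n(\C)$ has the form $B\mapsto\tr{CB}$ for a unique matrix $C$. Rewriting the defining expression as
\begin{equation*}
\mathbb{B}(A,B)=n\tr{AB}-\tr{A}\tr{B}=\tr{\big(nA-\tr{A}\id\big)B},
\end{equation*}
I would set $\Phi(A)=nA-\tr{A}\id$, a linear map $M_n(\C)\to M_n(\C)$, so that $\mathbb{B}(A,B)=\langle \Phi(A),B\rangle$.

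The key observation is that $\Phi$ is not injective: since $\Phi(\id)=n\id-\tr{\id}\id=n\id-n\id=0$, the identity matrix lies in $\ker\Phi$, which is therefore at least one-dimensional. Consequently the image $W=\Phi\big(M_n(\C)\big)$ has dimension $m=\dim W\leq n^2-1$. Choosing a basis $w_1,\dots,w_m$ of $W$ and writing $\Phi(A_i)=\sum_k c_{ik}w_k$, each entry of $\Lambda$ factors as
\begin{equation*}
\Lambda_{ij}=\langle \Phi(A_i),B_j\rangle=\sum_{k=1}^m c_{ik}\,\langle w_k,B_j\rangle,
\end{equation*}
so that $\Lambda=CD$ with $C=(c_{ik})$ of size $n^2\times m$ and $D=(\langle w_k,B_j\rangle)$ of size $m\times n^2$.

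Since $\Lambda$ thus factors through an $m$-dimensional space with $m\leq n^2-1$, its rank is at most $m<n^2$, and an $n^2\times n^2$ matrix of rank strictly less than $n^2$ is singular; this is the conclusion sought. I expect the only genuine content to be the degeneracy step, namely recognizing that the subtraction of $\tr{A}\tr{B}$ forces $\id$ into the kernel of the associated map $\Phi$, after which the rank bound is routine linear algebra. The remaining care is simply to phrase the factorization $\Lambda=CD$ so that it is the inner dimension $m$, rather than $n^2$, that controls the rank.
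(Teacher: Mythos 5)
Your proof is correct, and it rests on the same key observation as the paper's: the identity matrix annihilates the form, since $\mathbb{B}(A,\id)=n\tr{A}-n\tr{A}=0$ --- in your language, $\Phi(\id)=0$. Where you differ is in how this degeneracy is converted into singularity of $\Lambda$. The paper argues directly on the columns of $\Lambda$: writing $v(\ \ )$ for the column vector with entries $\mathbb{B}(A_i,\ \ )$, it splits into two cases --- if the $B_j$ are linearly dependent, the columns $v(B_j)$ are dependent outright; if they are independent, they form a basis of the space of $n\times n$ matrices, so $\id=\sum_j c_j B_j$ with not all $c_j$ zero, whence $\sum_j c_j v(B_j)=v(\id)=0$. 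Your rank factorization $\Lambda=CD$ through the image of $\Phi$ removes this case split entirely: the bound $\mathrm{rank}(\Lambda)\leq\dim\mathrm{im}(\Phi)\leq n^2-1$ holds uniformly, with no reference to whether the $B_j$ span. What your route buys is uniformity and a quantitative conclusion (an explicit rank bound, which also makes clear that the same argument applies to any bilinear form with nontrivial radical); what the paper's buys is that it is entirely hands-on, exhibiting an explicit vanishing combination of the columns without invoking the trace pairing or rank inequalities. One small remark: the nondegeneracy of the trace pairing, which you recall at the outset, is never actually used --- the factorization $\Lambda=CD$ and the resulting rank bound go through without it.
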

\begin{proof}
Consider the co-vector $$v(\ \ )=\left[\begin{array}{c}\mathbb{B}(A_1,\ \ )\\\mathbb{B}(A_2,\ \ )\\
\vdots\\\mathbb{B}(A_{n^2},\ \ )\end{array}\right].$$  If $B_1,...,B_{n^2}$ are linearly dependent then so are
$v(B_1),v(B_2),...,v(B_{n^2})$, which implies the
columns of $\Lambda$ are linearly dependent.  Otherwise there exists coefficients, not all zero, so
$$c_1 B_1+c_2 B_2+\cdots+c_{n^2}B_{n^2}=\id,$$ which implies $$c_1 v(B_1)+c_2 v(B_2)+\cdots+c_{n^2}v(B_{n^2})=0$$ since the identity $\id$ is
in the kernel of $\mathbb{B}(A,\  )$.  So again the columns of $\Lambda$ are linearly dependent.  Either way, $\Lambda$ is
singular.
\end{proof}

\begin{lemma}\label{qlemma} There exists a polynomial $Q\in R$ so $Q-t_{(5)}t_{(-5)}\in \ker(\Pi)$, and in particular
\begin{align}\label{q}
Q=&9-6t_{(1)}t_{(-1)} -6t_{(2)}t_{(-2)} -6t_{(3)}t_{(-3)} -6t_{(4)}t_{(-4)}+t_{(1)}^3 +t_{(2)}^3 +t_{(3)}^3 +t_{(4)}^3\nonumber\\ 
&+t_{(-1)}^3 +t_{(-2)}^3 +t_{(-3)}^3 +t_{(-4)}^3 -3t_{(-4)}t_{(-3)}t_{(-1)} -3t_{(4)}t_{(3)}t_{(1)} -\nonumber\\
&3t_{(-4)}t_{(2)}t_{(3)} -3t_{(4)}t_{(-2)}t_{(-3)}+3t_{(-4)}t_{(-2)}t_{(1)} +3t_{(4)}t_{(2)}t_{(-1)}+ \nonumber\\
&3t_{(1)}t_{(2)}t_{(-3)} +3t_{(-1)}t_{(-2)}t_{(3)}+t_{(-2)}t_{(-1)}t_{(2)}t_{(1)}+t_{(-3)}t_{(-2)}t_{(3)}t_{(2)} +\nonumber\\
&t_{(-4)}t_{(-1)}t_{(4)}t_{(1)} +t_{(-4)}t_{(-2)}t_{(4)}t_{(2)} +t_{(-3)}t_{(-1)}t_{(3)}t_{(1)}+\nonumber \\
&t_{(-3)}t_{(-4)}t_{(3)}t_{(4)}+t_{(-4)}^2t_{(-3)}t_{(-2)}  +t_{(4)}^2t_{(3)}t_{(2)} +t_{(-1)}^2t_{(-2)}t_{(-4)} +t_{(1)}^2t_{(2)}t_{(4)}+\nonumber\\
&t_{(1)}t_{(-2)}^2t_{(-3)} +t_{(-1)}t_{(2)}^2t_{(3)} +t_{(-4)}t_{(-3)}t_{(1)}^2 +t_{(4)}t_{(3)}t_{(-1)}^2 +\nonumber\\
&t_{(-4)}t_{(2)}t_{(-3)}^2 +t_{(4)}t_{(-2)}t_{(3)}^2 +t_{(-1)}^2t_{(-3)}t_{(2)} +t_{(1)}^2t_{(3)}t_{(-2)} +\nonumber\\
&t_{(-4)}t_{(1)}t_{(2)}^2 +t_{(4)}t_{(-1)}t_{(-2)}^2+t_{(-4)}t_{(3)}t_{(-2)}^2 +t_{(4)}t_{(-3)}t_{(2)}^2 +\nonumber\\
&t_{(1)}t_{(3)}t_{(-4)}^2 +t_{(-1)}t_{(-3)}t_{(4)}^2 +t_{(-1)}t_{(-4)}t_{(3)}^2+t_{(1)}t_{(4)}t_{(-3)}^2-2t_{(-3)}^2t_{(-2)}t_{(-1)}-\nonumber\\
&2t_{(3)}^2t_{(2)}t_{(1)} -2t_{(-4)}^2t_{(-1)}t_{(2)} -2t_{(4)}^2t_{(1)}t_{(-2)}+t_{(-1)}^2t_{(-2)}^2t_{(-3)}+t_{(1)}^2t_{(2)}^2t_{(3)}+\nonumber\\
&t_{(-4)}t_{(-1)}^2t_{(2)}^2+t_{(4)}t_{(1)}^2t_{(-2)}^2-t_{(-4)}t_{(-2)}^2t_{(2)}t_{(1)} -t_{(4)}t_{(2)}^2t_{(-2)}t_{(-1)}-\nonumber\\
&t_{(-3)}t_{(1)}^2t_{(-1)}t_{(2)}-t_{(3)}t_{(-1)}^2t_{(1)}t_{(-2)}- t_{(-3)}t_{(2)}^2t_{(-2)}t_{(1)} -t_{(3)}t_{(-2)}^2t_{(2)}t_{(-1)}-\nonumber\\
&t_{(-4)}t_{(-2)}t_{(-1)}t_{(1)}^2 -t_{(4)} t_{(2)}t_{(1)}t_{(-1)}^2-t_{(-1)}t_{(-2)}^3t_{(1)}-t_{(-1)}t_{(2)}^3 t_{(1)} -\nonumber\\
&t_{(-1)}^3t_{(-2)}t_{(2)}-t_{(1)}^3t_{(-2)}t_{(2)}-t_{(-4)}t_{(-3)}t_{(-2)}t_{(-1)}t_{(2)}-t_{(4)}t_{(3)}t_{(2)}t_{(1)}t_{(-2)}-\nonumber\\ 
&t_{(-1)}t_{(1)}t_{(2)}t_{(-4)}t_{(3)} -t_{(-1)}t_{(1)}t_{(-2)}t_{(4)}t_{(-3)}+ t_{(-2)}t_{(-1)}^2t_{(1)}^2t_{(2)} +t_{(-1)}t_{(-2)}^2t_{(2)}^2t_{(1)}.
\end{align}
\end{lemma}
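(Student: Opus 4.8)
The goal is to exhibit $Q\in R$ with $\Pi(Q)=\Pi(\ti{5})\Pi(\ti{-5})=\tr{C}\tr{C^{-1}}$, where $C=\xb_1\xb_2\xb_1^{-1}\xb_2^{-1}$ is the commutator and $C^{-1}=\xb_2\xb_1\xb_2^{-1}\xb_1^{-1}$; identifying this product with the explicit polynomial \eqref{q} then proves the lemma. The plan is to manufacture a single relation, valid on all of $\R$, in which $\tr{C}\tr{C^{-1}}$ appears while every remaining term is the trace of a word of weighted length at most $6$ with exponents $\pm 1$, hence a polynomial in the eight generators $\ti{\pm 1},\dots,\ti{\pm 4}$ by Lemma \ref{gens} and Corollary \ref{generatorform}. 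The engine is Fact \ref{fact} with $n=3$: for any nine matrices $A_1,\dots,A_9$ and nine matrices $B_1,\dots,B_9$, the $9\times 9$ matrix $\Lambda=(\mathbb{B}(A_i,B_j))$, where $\mathbb{B}(A,B)=3\tr{AB}-\tr{A}\tr{B}$, is singular, so $\det\Lambda=0$.

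First I would choose the tuples so that $A_1=C$ and $B_1=C^{-1}$, and take $A_2,\dots,A_9$ and $B_2,\dots,B_9$ to be short words in $\xb_1^{\pm 1},\xb_2^{\pm 1}$ subject to two requirements: that $\{A_i\}$ and $\{B_j\}$ each span $M_3(\C)$ for generic representations, which by the rank-$8$ mechanism in the proof of Fact \ref{fact} makes the vanishing of $\det\Lambda$ a genuine relation rather than a tautology; and that each product $A_iB_j$ with $(i,j)\neq(1,1)$ telescopes or reduces into the lower eight generators, so that $\ti{\pm 5}$ is confined to the first row and column. With such a choice $\mathbb{B}(C,C^{-1})=3\tr{\id}-\tr{C}\tr{C^{-1}}=9-\tr{C}\tr{C^{-1}}$ is the unique entry carrying the product, while $\tr{C}=\Pi(\ti{5})$ otherwise enters only linearly along row one and $\tr{C^{-1}}=\Pi(\ti{-5})$ only linearly along column one.

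Next I would expand $\det\Lambda=0$. Since $\ti{5}$ is confined to row one and $\ti{-5}$ to column one, and each permutation term of the determinant uses exactly one entry from each, the expansion is at most bilinear in $(\ti{5},\ti{-5})$, of the form $\alpha\,\ti{5}\ti{-5}+\beta\,\ti{5}+\gamma\,\ti{-5}+\delta=0$ with $\alpha,\beta,\gamma,\delta\in R$; the coefficient $\alpha$ is, up to sign, the complementary minor, which is generically nonzero precisely because the eight short words span a hyperplane avoiding $\id$, the kernel of $\mathbb{B}$. I would then use the relation $\ti{5}+\ti{-5}=P$ from equation \eqref{polyp2} to remove the residual linear terms: writing $\ti{-5}=P-\ti{5}$ turns the bilinear relation into a quadratic in $\ti{5}$ over $R$, whose comparison with the genuine non-linear quadratic dependence of $\ti{5}$ over $R$ established in the proof of Lemma \ref{ideallemma} isolates the symmetric product $\ti{5}\ti{-5}$ as a polynomial $Q\in R$. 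The involution $\frak{i}$, which interchanges $\ti{5}$ and $\ti{-5}$ (hence $C$ and $C^{-1}$), is the structural reason this product is symmetric and survives as the obstruction.

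The conceptual steps are short; the main obstacle is computational and organizational. The delicate point is the choice of the nine words: they must simultaneously span generically and keep every off-diagonal product out of $\{\ti{5},\ti{-5}\}$, which is what makes the determinant genuinely quadratic rather than of higher degree in the commutator traces. Granting a workable choice, expanding a $9\times 9$ determinant of trace polynomials, reducing each $\tr{A_iB_j}$ to the eight generators by the algorithm of Lemma \ref{gens}, and simplifying to the closed form \eqref{q} is lengthy and best carried out with a computer algebra system, as the author does for the Jacobian computations. As consistency checks I would confirm that the output is invariant under $\frak{i}$ (as $\ti{5}\ti{-5}$ must be), that its total degree is $6$, matching the degree-$6$ hypersurface of Theorem \ref{ranktwo}, and that its constant term is $9=3\tr{\id}$, inherited from the entry $\mathbb{B}(C,C^{-1})$.
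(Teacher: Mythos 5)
Your proposal runs on the same engine as the paper's proof: Fact \ref{fact}, expansion of $\det(\Lambda)=0$, algebraic independence of $\ti{\pm 1},\dots,\ti{\pm 4}$ to keep the leading coefficient from vanishing, the relation $\ti{5}+\ti{-5}=P$ from \eqref{polyp2}, and a final computer-algebra simplification. The only real difference is the choice of tuples. The paper takes the nine short words $\xb_1,\xb_2,\xb_1^{-1},\xb_2^{-1},\xb_1\xb_2,\xb_2\xb_1,\xb_1\xb_2^{-1},\xb_2^{-1}\xb_1,\xb_2\xb_1^{-1}$ for \emph{both} tuples; then $\ti{5}$ occurs in exactly two entries (the products $(\xb_2^{-1}\xb_1)(\xb_2\xb_1^{-1})$ and $(\xb_2\xb_1^{-1})(\xb_2^{-1}\xb_1)$ are cyclic rotations of the commutator), $\ti{-5}$ occurs nowhere, and $\det(\Lambda)=0$ is an honest quadratic $P_1\ti{5}^2+P_2\ti{5}+P_3=0$ over $R$. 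Your variant, seating $C$ and $C^{-1}$ in the tuples, cannot satisfy your stated confinement condition literally: an entry $\mathbb{B}(C,B_j)=3\tr{C B_j}-\ti{5}\tr{B_j}$ involves the trace of a word of weighted length at least $7$, and its reduction by the algorithm of Chapter 2 generically re-introduces $\ti{\pm 5}$ linearly (e.g.\ $\tr{C\xb_1}=\ti{1}\ti{5}-\ti{-1}\tr{\xb_2\xb_1^{-1}\xb_2^{-1}}+\tr{\xb_1^{-1}\xb_2\xb_1^{-1}\xb_2^{-1}}$). Consequently $\alpha$ is not the $(1,1)$ cofactor alone but acquires cross terms from products of row-one with column-one entries. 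These defects are repairable: affinity of those entries in $\ti{\pm 5}$ is all you need for the bilinear form of the expansion, and nonvanishing of the relevant coefficient can be checked by specialization, as the paper does for $P_1$.

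The genuine gap is the extraction step. Lemma \ref{ideallemma} is proved in the paper \emph{under the hypothesis} that $Q$ exists --- the present lemma is exactly what completes its proof --- so appealing to ``the quadratic dependence of $\ti{5}$ over $R$ established in the proof of Lemma \ref{ideallemma}'' is circular. The unconditional facts from that proof (there is no nonzero relation among $\ti{\pm 1},\dots,\ti{\pm 4}$, and $\ti{5}$ is not determined by them) do not finish your argument: substituting $\ti{-5}=P-\ti{5}$ into $\alpha\,\ti{5}\ti{-5}+\beta\,\ti{5}+\gamma\,\ti{-5}+\delta=0$ gives $\alpha\,\ti{5}\ti{-5}=(\gamma-\beta)\ti{5}-\gamma P-\delta$, and you may conclude $\ti{5}\ti{-5}\in\mathrm{Frac}(R)$ only if $\beta=\gamma$. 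Nothing you cite forces that cancellation; it is precisely the analogue of the identity $P_2=-P\cdot P_1$ that the paper verifies by direct computation in \emph{Mathematica}. To close the gap you must either do that computation, or supply a structural reason, for instance: the transposition map $(\xb_1,\xb_2)\mapsto({}^{t}\xb_1,{}^{t}\xb_2)$ induces an automorphism of $\C[\X]$ that fixes $\ti{\pm 1},\dots,\ti{\pm 4}$ (the trace of a word equals the trace of its reversal, up to cyclic equivalence) and swaps $\ti{5}\leftrightarrow\ti{-5}$; hence $\ti{-5}$ is the conjugate of $\ti{5}$ over $\mathrm{Frac}(\Pi(R))$, the two roots of your quadratic sum to $P$, which forces $\beta=\gamma$, and the product $\ti{5}\ti{-5}$ lies in $\mathrm{Frac}(R)$, leaving only the polynomiality and the explicit form \eqref{q} to the computer. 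Note that $\frak{i}$ cannot play this role, contrary to your closing remark: it sends $\ti{i}\mapsto\ti{-i}$ for \emph{every} $i$, so it does not fix $R$ pointwise and says nothing about the two points in a fiber of $\X\to\C^8$.
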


\begin{proof}
The following argument is an adaptation of an existence argument given in \cite{N}, which we use not only to show existence of $Q$, but to derive the 
explicit formulation of $Q$ as well.  Indeed, with respect to Fact $\ref{fact}$, let
\begin{align*}
&A_1=B_1=\xb_1\ \ \ A_4=B_4=\xb_2^{-1} \ \ \ A_7=B_7=\xb_1\xb_2^{-1}\\
&A_2=B_2=\xb_2 \ \ \ A_5=B_5=\xb_1\xb_2 \ \ A_8=B_8=\xb_2^{-1}\xb_1\\
&A_3=B_3=\xb_1^{-1} \ \ A_6=B_6=\xb_2\xb_1 \ \ A_9=B_9=\xb_2\xb_1^{-1}.
\end{align*}

Then we see that $\Lambda$ has exactly two entries with
$\tr{\mathbf{x}_1\mathbf{x}_2\mathbf{x}_1^{-1}\mathbf{x}_2^{-1}}$. After rewriting all matrix entries in terms of our
generators of $\C[\X]$, we have
$$0 =\det(\Lambda)=P_1\cdot\tr{\mathbf{x}_1\mathbf{x}_2\mathbf{x}_1^{-1}\mathbf{x}_2^{-1}}^2
+P_2\cdot\tr{\mathbf{x}_1\mathbf{x}_2\mathbf{x}_1^{-1}\mathbf{x}_2^{-1}}+P_3,$$
where $P_1,P_2,P_3$ are polynomials in terms of
$$\tilde{R}=\{\tr{\mathbf{x}_1}, \tr{\mathbf{x}_1^{-1}},
\tr{\mathbf{x}_2}, \tr{\mathbf{x}_2^{-1}}, \tr{\mathbf{x}_1\mathbf{x}_2},
\tr{\mathbf{x}_1^{-1}\mathbf{x}_2^{-1}},\tr{\mathbf{x}_1\mathbf{x}_2^{-1}},
\tr{\mathbf{x}_1^{-1}\mathbf{x}_2}\}.$$

If $P_1=0$ then we have a non-trivial relation among the elements of $\tilde{R}$, which we have already seen cannot exist.
Alternatively, one can specialize the elements of $\tilde{R}$ with the aid of a computer algebra system to verify that
$P_1\not=0$.  Then by direct calculation, using {\it Mathematica}, we verify that $P_2=-P \cdot P_1$.  Hence it follows that
$$-P_3=P_1(t_{(5)}^2-Pt_{(5)})=P_1(t_{(5)}^2-(t_{(5)}+t_{(-5)})t_{(5)})=-P_1t_{(5)}t_{(-5)},$$ and so we have shown the existence
of $$Q=t_{(5)}t_{(-5)}.$$  Lastly, we simplify $P_3/P_1,$ with the aid of {\it
Mathematica}, which turns out to be equation $\eqref{q}$.\end{proof}

\section{Outer Automorphisms}
Given any $\alpha\in \mathrm{Aut}(\F_2)$, we define $a_\alpha \in \mathrm{End}(\C[\X])$ by extending the following mapping
$$a_\alpha(\tr{\mathbf{w}})=\tr{\alpha(\mathbf{w})}.$$  If $\alpha \in \mathrm{Inn}(\F_2)$, then there exists $\mathtt{u}\in \F_2$ so for all
$\mathtt{w}\in \F_2$, $$\alpha(\mathtt{w})=\mathtt{uwu}^{-1},$$ which implies
$$a_\alpha(\tr{\mathbf{w})}=\tr{\ub\mathbf{w}\ub^{-1}}=\tr{\mathbf{w}}.$$  

Thus $\ot(\F_2)$ acts on $\C[\X]$.  By results in \cite{MKS}, $\ot(\F_2)$ is generated by the following mappings
\begin{align}
\tau&=\left\{
\begin{array}{l}
\xt_1\mapsto \xt_2\\
\xt_2\mapsto \xt_1
\end{array}\right.\\
\iota&=\left\{
\begin{array}{l}
\xt_1\mapsto \xt_1^{-1}\\
\xt_2\mapsto \xt_2
\end{array}\right.\\
\eta&=\left\{
\begin{array}{l}
\xt_1\mapsto \xt_1\xt_2\\
\xt_2\mapsto \xt_2
\end{array}\right.
\end{align}
Let $\frak{D}$ be the subgroup generated by $\tau$ and $\iota$, and let $\C\frak{D}$ be the corresponding group ring.
Then $\C[\X]$ is a $\C\frak{D}$-module.

\begin{lemma}\label{preservelemma}
The action of $\C \frak{D}$ preserves $R$, and $\frak{D}$ fixes $P$ and $Q$.
\end{lemma}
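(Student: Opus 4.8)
The plan is to reduce everything to how the two generators $\tau$ and $\iota$ of $\frak{D}$ permute the nine distinguished trace generators, and then to exploit the fact that, modulo $\ker(\Pi)$, the polynomials $P$ and $Q$ are the two elementary symmetric functions of $\ti{5}$ and $\ti{-5}$. No hard estimate is involved; the content is bookkeeping plus one clean symmetry observation.

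First I would record the action on the eight generators of $R$. Since $a_\alpha(\tr{\wb})=\tr{\alpha(\wb)}$ and the trace is invariant under cyclic permutation, a direct check gives that $\tau$ induces the variable permutation $\ti{1}\leftrightarrow\ti{2}$, $\ti{-1}\leftrightarrow\ti{-2}$, $\ti{4}\leftrightarrow\ti{-4}$, fixing $\ti{3}$ and $\ti{-3}$, while $\iota$ induces $\ti{1}\leftrightarrow\ti{-1}$, $\ti{3}\leftrightarrow\ti{-4}$, $\ti{-3}\leftrightarrow\ti{4}$, fixing $\ti{2}$ and $\ti{-2}$. For instance $\tau(\tr{\xb_1\xb_2^{-1}})=\tr{\xb_2\xb_1^{-1}}=\tr{\xb_1^{-1}\xb_2}$ by cyclicity, giving $\ti{4}\mapsto\ti{-4}$, and $\iota(\tr{\xb_1\xb_2})=\tr{\xb_1^{-1}\xb_2}$ gives $\ti{3}\mapsto\ti{-4}$. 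In both cases the eight symbols $\{\ti{i}:1\le|i|\le4\}$ are permuted among themselves, so each generator of $\frak{D}$ carries the subring generated by the corresponding traces to itself; hence the $\C\frak{D}$-action preserves $R$. Here I would invoke that $\Pi$ restricted to $R$ is injective (the eight generators are algebraically independent, established inside the proof of Lemma \ref{ideallemma}), so that the abstract variable-permutation action on $R$ is genuinely the restriction of the action on $\C[\X]$ to $\Pi(R)$.

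For the invariance of $P$ and $Q$ I would avoid expanding the long formula \eqref{q}. The same cyclic-invariance computations give $\tau(\ti{5})=\ti{-5}$ and $\iota(\ti{5})=\ti{-5}$, and since each generator is an involution on this pair, also $\tau(\ti{-5})=\iota(\ti{-5})=\ti{5}$; thus every element of $\frak{D}$ either fixes the pair $\{\ti{5},\ti{-5}\}$ pointwise or interchanges its two members. Now recall that $P-(\ti{5}+\ti{-5})\in\ker(\Pi)$ (the remark preceding Lemma \ref{ideallemma}) and $Q-\ti{5}\ti{-5}\in\ker(\Pi)$ (Lemma \ref{qlemma}); in other words $\Pi(P)$ and $\Pi(Q)$ are exactly the first and second elementary symmetric functions of $\Pi(\ti{5})$ and $\Pi(\ti{-5})$. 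Because the action merely permutes these two traces, it fixes both symmetric functions, so $\Pi(P)$ and $\Pi(Q)$ are fixed in $\C[\X]$; injectivity of $\Pi|_R$ then forces $\frak{D}$ to fix $P$ and $Q$ inside $R$.

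The main obstacle is well-definedness rather than genuine difficulty: I must be careful that the permutation action on the polynomial ring $R$ really is the restriction of $a_\tau$ and $a_\iota$ to $\Pi(R)\subset\C[\X]$, which is precisely where the algebraic independence of the eight generators is used. As a sanity check—or as an alternative, purely computational proof for readers who prefer it—one can substitute the two variable-permutations directly into the polynomial $P$ of this section and into equation \eqref{q}, and verify term-by-term (e.g.\ in \emph{Mathematica}) that each polynomial is left unchanged; the symmetric-function argument is what explains why this cancellation must occur.
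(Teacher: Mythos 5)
Your proof is correct and follows essentially the same route as the paper's: both verify that $\tau$ and $\iota$ act on the eight generators $t_{(\pm i)}$, $1\leq i\leq 4$, by the same permutations you list (hence preserve $R$), and both deduce invariance of $P$ and $Q$ from the single observation that $\tau$ and $\iota$ swap $t_{(5)}$ and $t_{(-5)}$ while $P$ and $Q$ equal the elementary symmetric functions $t_{(5)}+t_{(-5)}$ and $t_{(5)}t_{(-5)}$ in $\C[\X]$. Your explicit appeal to the injectivity of $\Pi|_R$ (algebraic independence of the eight generators, from the proof of Lemma \ref{ideallemma}) to transfer the invariance from $\C[\X]$ back to the abstract ring $R$ spells out a step the paper leaves implicit, but it is the same argument.
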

\begin{proof}
First we note that it suffices to check $\{\iota, \tau\}$ on $$\{t_{(\pm i)},\  1\leq i\leq 4\},$$ since the former
generates $\C\frak{D}$ and the latter generates $R$. Secondly we observe that both $\iota$ and $\tau$ are
idempotent.

Indeed, $\iota$ maps the generators of $R$ as follows:
\begin{eqnarray*}
t_{(1)}\mapsto t_{(-1)} \mapsto t_{(1)}\\
t_{(3)}\mapsto t_{(-4)}\mapsto t_{(3)}\\
t_{(2)}\mapsto t_{(2)}\\
t_{(-2)}\mapsto t_{(-2)}\\
t_{(4)}\mapsto t_{(-3)}\mapsto t_{(4)}.
\end{eqnarray*} 
Likewise, $\tau$ maps the generators of $R$ by:
\begin{eqnarray*} 
t_{(1)}\mapsto t_{(2)} \mapsto t_{(1)}\\
t_{(-1)}\mapsto t_{(-2)}\mapsto t_{(-1)}\\
t_{(3)}\mapsto t_{(3)}\\
t_{(-3)}\mapsto t_{(-3)}\\
t_{(4)}\mapsto t_{(-4)}\mapsto t_{(4)}.
\end{eqnarray*}   
Hence both map into $R$.
For the second part of the lemma, it suffices to observe $\iota(t_{(\pm 5)})=t_{(\mp 5)}=\tau(t_{(\pm 5)}),$ because in $\C[\X]$, $P=t_{(-5)}+t_{(5)}$ and
$Q=t_{(5)}t_{(-5)}.$ \end{proof}

Observing $\iota(t_{(5)})=\tau(t_{(5)})=t_{(-5)}=P-t_{(5)},$ it is apparent that $\frak{D}$ does not act as a permutation group on the
entire coordinate ring of $\X$.  However, when restricted to $R$ there is

\begin{theorem}
$\frak{D}$ restricted to $R$ is group isomorphic to the dihedral group, $D_4$, of order $8$.  Moreover, the algebraically independent
generators are characterized as those which $\frak{D}$ acts on as a permutation group.
\end{theorem}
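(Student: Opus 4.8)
The plan is to handle the two assertions separately, using the explicit action of $\iota$ and $\tau$ on the generators of $R$ recorded in the proof of Lemma \ref{preservelemma}. For the first assertion I would begin by transcribing that action into cycle notation. Both maps are involutions, and on the eight generators they read
\[
\iota = (\ti{1}\,\ti{-1})(\ti{3}\,\ti{-4})(\ti{4}\,\ti{-3}), \qquad \tau = (\ti{1}\,\ti{2})(\ti{-1}\,\ti{-2})(\ti{4}\,\ti{-4}),
\]
with $\iota$ fixing $\ti{2},\ti{-2}$ and $\tau$ fixing $\ti{3},\ti{-3}$.

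Next I would compute the product $\sigma=\tau\iota$ by tracking each generator through $\iota$ and then $\tau$; this yields
\[
\sigma = (\ti{1}\,\ti{-2}\,\ti{-1}\,\ti{2})(\ti{3}\,\ti{4}\,\ti{-3}\,\ti{-4}),
\]
a product of two disjoint $4$-cycles, so $\sigma$ has order exactly $4$. Since $\iota$ and $\tau$ are involutions, $\iota\tau=(\tau\iota)^{-1}=\sigma^{-1}$, hence $\tau\sigma\tau^{-1}=\sigma^{-1}$, which is the defining relation of a dihedral group. Because $\langle\sigma\rangle$ has order $4$ and the relation $\tau\sigma\tau^{-1}=\sigma^{-1}\neq\sigma$ forces $\tau\notin\langle\sigma\rangle$ (so the group is non-abelian), the image of $\D$ in $\mathrm{Aut}(R)$ has order $8$ and is generated by two involutions; a group generated by two involutions whose product has order $n$ is dihedral of order $2n$, so here the image is isomorphic to $D_4$. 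This is exactly the meaning of ``$\D$ restricted to $R$.''

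For the second assertion I would contrast the behavior on the algebraically independent generators with that on $\ti{5},\ti{-5}$. The eight generators $\{\ti{i}:1\le|i|\le 4\}$ of $R$ are algebraically independent, as established in the proof of Lemma \ref{ideallemma}, and the computation above shows $\D$ permutes them. On the other hand $\ti{5}$ and $\ti{-5}$ are algebraically dependent on $R$, since by Lemma \ref{ideallemma} they are the two roots of $\ti{5}^2-P\ti{5}+Q$ with $P,Q\in R$. Because $\D$ fixes $P$ (Lemma \ref{preservelemma}), both $\iota$ and $\tau$ send $\ti{5}\mapsto \ti{-5}=P-\ti{5}$, an affine involution rather than a permutation of generators. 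Thus a minimal generator of $\C[\X]$ is algebraically independent precisely when $\D$ acts on it as a permutation, which is the claimed characterization.

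The computations are routine, so I expect the only care needed to be in the formulation of the second part: one must distinguish the genuine permutation action of $\D$ on the eight coordinates of $R$ from the formal swap $\ti{5}\leftrightarrow\ti{-5}$ in the polynomial ring $\overline{R}$, which collapses to the affine map $\ti{5}\mapsto P-\ti{5}$ once the hypersurface relation is imposed. Confirming that the permutation representation of $\D$ on $R$ is faithful --- that no nontrivial element of the $D_4$ acts trivially --- is what pins the image down to exactly $D_4$ rather than a proper quotient, and this is guaranteed by the order-$4$ computation for $\sigma$ together with $\tau\notin\langle\sigma\rangle$.
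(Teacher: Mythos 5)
Your proposal is correct and follows essentially the same route as the paper: both compute the explicit cycle structure of $\iota$, $\tau$, and $\tau\iota$ on the eight subscripts, invoke the standard presentation criterion (an order-$4$ element together with an inverting involution generates $D_4$), and deduce the second assertion from the algebraic independence of $\{t_{(\pm i)}\colon 1\leq i\leq 4\}$ combined with the failure of $\frak{D}$ to permute $t_{(5)}$, which it instead sends to $P-t_{(5)}$. The only difference is presentational: the paper tabulates the full Cayley table of the eight elements, whereas you replace that enumeration with the cleaner abstract argument that $\tau\notin\langle\tau\iota\rangle$ forces the image to have order exactly $8$.
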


\begin{proof}
Let $S=\mathrm{Sym}(\pm 1,\pm 2, \pm 3, \pm 4)$ be the symmetric group of all permutations on the eight letters $\pm i$ for $1\leq i\leq 4$.
Then we have worked out, in the proof of Lemma \ref{preservelemma}, that $\tau$ acts on the subscripts of $t_{(\pm i)}$ as the
permutation $(1,2)(-1,-2)(4,-4)$ and likewise, $\iota$ acts as the permutation $(1,-1)(3,-4)(-3,4).$  Since $\frak{D}$ is generated by these 
elements, we certainly have a well defined injection $\frak{D}\to S$.  

The Cayley table for $\frak{D}$ is:

\begin{table}[!h]
\begin{center}
{\footnotesize
\begin{tabular}{|c||c|c|c|c|c|c|c|c|}
\hline
&$id$ & $\iota$ & $\tau$ & $\iota\tau$ & $\tau\iota$ & $\tau\iota\tau$ & $\iota\tau\iota$ & $\tau\iota\tau\iota$\\
\hline
\hline
$id$ & $id$& $ \iota$ & $\tau$ &  $\iota \tau$ & $\tau \iota$ & $\tau \iota \tau$ & $\iota\tau\iota$ & $\tau\iota\tau\iota$ \\
\hline
$\iota$ &$\iota$ & $id$ & $\iota\tau$ & $\tau$ & $\iota\tau\iota$ & $\tau\iota\tau\iota$ &  $\tau\iota$  & $\tau \iota \tau$ \\
\hline
$\tau$ & $\tau$ & $\tau\iota$ & $id$ & $\tau\iota\tau$ & $\iota$ & $\iota\tau$ & $\tau\iota\tau\iota$ & $\iota \tau \iota$\\
\hline
$\iota\tau$ & $\iota\tau$ & $\iota\tau\iota$ & $\iota$ & $\tau\iota\tau\iota$ & $id$ & $\tau$ & $\tau\iota\tau$ &$\tau\iota$ \\
\hline
$\tau \iota$ &$\tau \iota$ & $\tau$ &$\tau \iota \tau$ & $id$ & $\tau\iota\tau\iota$ &$\iota\tau\iota$ & $\iota$ &$\iota\tau$\\
\hline
$\tau \iota \tau$ & $\tau \iota \tau$ &$\tau\iota\tau\iota$ &$\tau \iota$ &$\iota\tau\iota$ &$\tau$ & $id$ &$\iota\tau$ & $\iota$\\
\hline
$\iota \tau \iota$ & $\iota \tau \iota$ &$\iota\tau$ &$\tau\iota\tau\iota$ &$\iota$ & $\tau\iota\tau$ & $\tau\iota$ & $id$ & $\tau$\\
\hline
$\tau\iota\tau\iota$ &$\tau\iota\tau\iota$ & $\tau\iota\tau$ & $\iota\tau\iota$ & $\tau\iota$ &$\iota\tau$ & $\iota$ & $\tau$ &$id$\\
\hline
\end{tabular}}
\end{center}
\caption{Cayley Table of $\D$}
\end{table}
where
\begin{center}
\begin{tabular}{ll}
$id\mapsto (1)$ & $\iota\mapsto (1,-1)(3,-4)(-3,4)$\\
$\tau\mapsto (1,2)(-1,-2)(4,-4)$ & $\iota \tau\mapsto (1,2,-1,-2)(3,-4,-3,4)$\\
$\tau \iota\mapsto (1,-2,-1,2)(3,4,-3,-4)$ & $\tau \iota \tau\mapsto (2,-2)(3,4)(-3,-4)$\\
$\iota\tau\iota\mapsto (1,-2)(2,-1)(3,-3)$ & $\tau\iota\tau\iota\mapsto (1,-1)(2,-2)(3,-3)(4,-4)$.
\end{tabular}
\end{center}

It is an elementary exercise in group theory (see \cite{H}) to show any group presentable as $$\{a,b\ \big|\ |a|=n\geq 3,\
|b|=2,\ ba=a^{-1}b\}$$ is isomorphic to the dihedral group $D_n$ of order $2n$.  However, letting $a=\tau\iota$ and $b=\iota$ we
see $|a|=4$, $|b|=2$, $\frak{D}$ is generated by $a$ and $b$, and  $$ba=\iota\tau\iota=(\tau\iota)^{-1}\iota=a^{-1}b.$$

The last statement in the theorem follows from the fact that $\{t_{(\pm i)}\ |\ 1\leq i\leq 4\}$ are algebraically independent and $\frak{D}$ 
does not act as a permutation
group if $t_{(5)}$ were included.
\end{proof}

As already noted, the group ring $\C\frak{D}$ acts on $\C[\X]$.  By brute force computation, one can establish the following succinct
expressions for the polynomial relations $P$ and $Q$.

\begin{corollary}
In $\C\frak{D}$ define $\mathbb{S}_\frak{D}$ to be the group ``symmetrizer'' $$\sum_{\sigma\in \frak{D}}\sigma.$$  Then  
$P=\mathbb{S}_\frak{D}(p)-3$ and $Q=\mathbb{S}_\frak{D}(q)+9$ where $p$ and $q$ are given by:
\begin{align*}
p=\frac{1}{8}\big(t_{(1)}t_{(-1)}t_{(2)}t_{(-2)}-4t_{(1)}t_{(-2)}t_{(-4)}+2t_{(1)}t_{(-1)}+&2t_{(3)}t_{(-3)}\big)\\
q=\frac{1}{8}\big(2t_{(-2)}t_{(-1)}^2 t_{(1)}^2t_{(2)}+4t_{(1)}^2t_{(2)}^2t_{(3)}-4t_{(1)}^3t_{(-2)}t_{(2)}&-8t_{(-4)}t_{(-2)}t_{(-1)}t_{(1)}^2-\\
4t_{(4)}t_{(3)}t_{(2)}t_{(1)}t_{(-2)}+8t_{(1)}t_{(3)}t_{(-4)}^2+8t_{(-4)}t_{(1)}t_{(2)}^2& -8t_{(3)}^2 t_{(2)}t_{(1)}+\\ 
4t_{(4)}t_{(-3)}t_{(2)}^2+t_{(-2)}t_{(-1)}t_{(2)}t_{(1)}+t_{(-3)}t_{(-4)}t_{(3)}t_{(4)}+&4t_{(-3)}t_{(-1)}t_{(3)}t_{(1)} +\\
4t_{(1)}^3 +4t_{(3)}^3+12t_{(-4)} t_{(-2)}t_{(1)}-12t_{(-4)}t_{(2)}t_{(3)}-&12t_{(1)}t_{(-1)} -12t_{(3)}t_{(-3)}\big).
\end{align*}
\end{corollary}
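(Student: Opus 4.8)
The plan is to read the statement as a repackaging of the invariance already established in Lemma \ref{preservelemma}: since $\frak{D}$ fixes both $P$ and $Q$, and since the constant terms of $P$ and $Q$ are exactly $-3$ and $9$, the polynomials $P+3$ and $Q-9$ are $\frak{D}$-invariant elements of $R$ with no constant term. The first step is therefore to record the elementary fact that, for any monomial $m$ in the $\ti{\pm i}$, the symmetrizer acts by $\mathbb{S}_\frak{D}(m)=|\mathrm{Stab}_\frak{D}(m)|\cdot\sigma_m$, where $\sigma_m$ denotes the sum over the $\frak{D}$-orbit of $m$; this follows at once from the orbit--stabilizer theorem applied to the permutation action of $\frak{D}$ on monomials. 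Consequently $\mathbb{S}_\frak{D}$ maps $R$ into the invariant subring $R^\frak{D}$, and every $\frak{D}$-invariant constant-free polynomial is a $\C$-linear combination of orbit sums $\sigma_m$.

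Next I would decompose $P+3$ and $Q-9$ into orbit sums. Using the explicit permutation representation of $\frak{D}$ computed in the preceding theorem (the images $\tau\mapsto(1,2)(-1,-2)(4,-4)$ and $\iota\mapsto(1,-1)(3,-4)(-3,4)$, with the remaining six elements read off the Cayley table), I would group the monomials appearing in $P+3$, and separately in $Q-9$, into $\frak{D}$-orbits and record the stabilizer order $s_o$ and shared coefficient $c_o$ of each orbit. To exhibit a seed whose symmetrization reproduces a given invariant, it then suffices to pick one representative $m_o$ per orbit and set its coefficient to $c_o/s_o$; by the displayed identity one then has $\mathbb{S}_\frak{D}\!\left(\sum_o (c_o/s_o)\,m_o\right)=\sum_o c_o\,\sigma_{m_o}$, which is the invariant itself. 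The polynomials $p$ and $q$ in the statement are precisely such seeds after clearing denominators through the common factor $\tfrac{1}{8}$: for instance $\ti{1}\ti{-1}\ti{2}\ti{-2}$ is $\frak{D}$-fixed, so $s_o=8$ and its coefficient in $p$ is $1/8$; the orbit of $\ti{1}\ti{-1}$ has size two and stabilizer order four, matching the coefficient $2/8$; and the four cubic terms of $P$ form a single orbit of stabilizer order two, matching the coefficient $-4/8$.

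The remaining work is a finite verification that these choices close up correctly, which I would carry out by expanding $\mathbb{S}_\frak{D}(p)$ and $\mathbb{S}_\frak{D}(q)$ term by term under the eight permutations and collecting. For $P$ this is short. The hard part will be $Q$: it has many orbits, and one must confirm that each of its monomial types is reproduced with the correct sign and multiplicity and that no spurious term survives. This bookkeeping is exactly the ``brute force computation'' alluded to before the statement, and is most safely discharged with \emph{Mathematica}; conceptually, however, nothing beyond the orbit--stabilizer identity and Lemma \ref{preservelemma} is required, since invariance guarantees that a seed exists and the explicit $p$ and $q$ need only be checked against it.
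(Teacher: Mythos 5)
Your proposal is correct and is essentially the paper's own proof: the paper likewise verifies the identity for $P$ by expanding $\mathbb{S}_\frak{D}(p)$ term by term (the factors $8$, $4$, $2$ appearing there are exactly your stabilizer orders), and defers the longer verification for $Q$ to \emph{Mathematica} or tedious hand computation. Your orbit--stabilizer framing just makes explicit the multiplicity bookkeeping that the paper's direct expansion displays implicitly, so there is no substantive difference in approach.
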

\begin{proof}
We work out $P$ only since the computation for $Q$ is established in the same way but longer.  Indeed,
\begin{align*}
\mathbb{S}_\frak{D}(p)=&\frac{1}{8}\big(\mathbb{S}_\frak{D}(t_{(1)}t_{(-1)}t_{(2)}t_{(-2)})
-4\mathbb{S}_\frak{D}(t_{(1)}t_{(-2)}t_{(-4)})+\\
&2\mathbb{S}_\frak{D}(t_{(1)}t_{(-1)})+2\mathbb{S}_\frak{D}(t_{(3)}t_{(-3)})\big)\\
=&\frac{1}{8}\big(8t_{(1)}t_{(-1)}t_{(2)}t_{(-2)}-4(2t_{(1)}t_{(2)}t_{(-3)}+2t_{(-1)}t_{(-2)}t_{(3)}+\\
&2t_{(1)}t_{(-2)}t_{(-4)}+2t_{(-1)}t_{(2)}t_{(4)})+2(4t_{(1)}t_{(-1)}+\\
&4t_{(2)}t_{(-2)}+4t_{(3)}t_{(-3)}+4t_{(4)}t_{(-4)})\big)\\
=&P+3.
\end{align*}
With the help of {\it Mathematica} or a tedious hand calculation, the formula for $Q$ is equally verified.
\end{proof}

In \cite{AP} an algorithm is deduced to write {\it minimal} generators for $\C[\X]$ when $\F_r$ is free of arbitrary rank.  It is the hope of the author that exploiting 
symmetries as above will simplify the calculations involved in describing the ideals for free groups of rank $3$ or more.  Consequently, this would allow for subsequent 
advances in determining the defining relations of $\X$ in general.

\chapter{Poisson Structure on $\C[\X]$}
Let $S_{n,g}$ be a compact, connected, smooth, orientable surface of genus $g$ with $n>0$ disks removed.  Its fundamental group has the following presentation:
$$\pi_1(S_{n,g},*)=\{\xt_1,\yt_1,...,\xt_g,\yt_g,\bt_1,...,\bt_n\ |\ \xt_1\yt_1\xt_1^{-1}\yt_1^{-1}\cdots \xt_g\yt_g\xt_g^{-1}\yt_g^{-1}\bt_1\cdots \bt_n=1\},$$ which is free of 
rank $r=2g+n-1$.  And so its Euler characteristic is $\chi(S_{g,n})=1-r+0=2-2g-n.$  

\begin{figure}[h]
\begin{center}
\epsfig{file=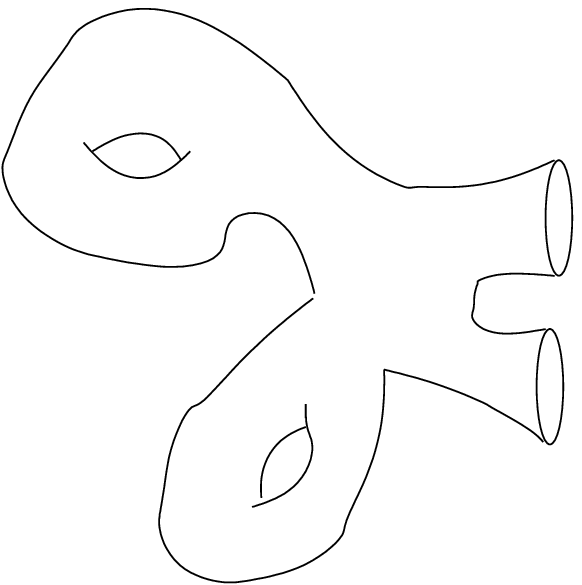}
\caption{$S_{2,2}$}
\end{center}
\end{figure}

If we assume $r>0$, then $\chi\leq 0$.  In particular, $r=1$ if and only if $\chi=0$, and in this case $S_{2,0}$ is homeomorphic 
to an annulus.  Otherwise, $\chi<0$.  The rank $r=2$ if and only if $\chi=-1$, in which case the surface is either 
$S_{3,0}$ or $S_{1,1}$; that is the three-holed sphere (or trinion or pair-of-pants), or the one-holed torus, 
respectively.

The coordinate ring of $\G\aq\G$ is $\C[\G\aq\G]=\C[\tr{\xb},\tr{\xb^{-1}}]$ which implies $\G\aq\G=\C^2$ which we parameterize by coordinates $(T_{(1)}, T_{(-1)})$.  We 
then define the boundary map $$\frak{b}_i:\X=\R\aq\G=\hm(\pi_1(S_{n,g},*),\G)\aq\G\longrightarrow\G\aq\G$$ by sending a representation class $[\rho]\mapsto 
[\rho_{|_{\bt_i}}]=(T_{(i)}, T_{(-i)})$, to the class corresponding to the restriction of $\rho$ to the boundary $\bt_i$.  This is well defined since any 
representative of $[\rho]$ is a conjugate of $\rho$, so for any $g\in \G$ the orbit of $\rho(\bt_i)$ and $g\rho(\bt_i)g^{-1}$ are identical.  We note that we are 
identifying $\X$ with conjugacy classes of representations that have closed orbits, namely those that are completely reducible, so there are no ``extended orbits'' to 
concern ourselves with.  

Subsequently, we define $\frak{b}=(\frak{b}_1,...,\frak{b}_n):\X=\G^r\aq\G\longrightarrow (\G\aq\G)^n$.  The map 
$\frak{b}$ depends on the surface, in particular, the presentation of its fundamental group.  We refer to it as a {\it peripheral structure}, and the pair 
$(\X,\frak{b})$ as the {\it relative character variety}.  Let 
$$\frak{F}=\bigcap_{i=1}^{n}\frak{b}_i^{-1}(T_{(i)}, T_{(-i)})=\{[\rho]\ |\ 
\frak{b}([\rho])=((T_{(1)}, T_{(-1)}),...,(T_{(n)}, T_{(-n)}))\}.$$  Each $\frak{F}$ is cut out of $\X$ by the 
equations $\tr{\mathbf{b}_i}=T_{(i)}$ and 
$\tr{\mathbf{b}^{-1}_i}=T_{(-i)}$ for $1\leq i\leq n$, and so is an algebraic set.  Moreover, they partition $\X$ 
since every representation has well-defined boundary values.  
 
Let $\mathcal{X}$ be the complement of the singular locus (a closed sub-variety) in $\X$, so $\mathcal{X}$ is a non-singular 
complex manifold that is dense in $\X$.  On an open dense subset of $(\G\aq\G)^n$, the Bertini theorems (see page $141$ in \cite{S}) give that 
$\frak{F}\cap\mathcal{X}$ is a non-singular submanifold of dimension $8r-8-2n=16(g-1)-6n$.  We claim that the union of these 
{\it leaves}, $\mathcal{F}=\frak{F}\cap\mathcal{X}$, foliate $\mathcal{X}$ by symplectic submanifolds, making $\mathcal{X}$ a Poisson manifold.  Moreover, the Poisson 
structure extends continuously over singularities in $\X$.  With respect to this structure, we will refer to the relative character variety $(\X,\frak{b})$ as a {\it 
Poisson variety}; that is, an affine variety whose coordinate ring is a Poisson algebra.

\section{Tangents, Cocycles, and Coboundaries}

Let $\F_r$ be a free group of rank $r$, and let $\frak{g}$ be the Lie algebra of $\G$ identified with its right invariant
vector fields.  For $\rho\in \R$, $\frak{g}$ is a $\F_r$-module, $\frak{g}_{\mathrm{Ad}_\rho}$, given by:  
$$\begin{CD}
\F_r  @>\rho>> \G @>\mathrm{Ad}>>\mathrm{Aut}(\frak{g})
\end{CD},$$ where $\mathrm{Ad}(\rho(\wt))(x)=\rho(\wt)x\rho(\wt)^{-1}$ is the adjoint representation.  

Define $\Cn{0}=\frak{g}$ and $\Cn{n}=\{\F_r^{\times n}\to \frak{g}\}$, the vector space of functions $\F_r^{\times 
n}\to \frak{g}$.  Now define $\delta_n:\Cn{n}\to\Cn{n+1}$ by
\begin{align*}
\delta_n f(\wt_1,\wt_2,...,\wt_{n+1})=&\mathrm{Ad}_\rho(\wt_1)f(\wt_2,...,\wt_{n+1})+(-1)^{n+1}f(\wt_1,...,\wt_n)+\\
&\sum_{i=1}^n(-1)^i f(\wt_1,...,\wt_i\wt_{i+1},...,\wt_{n+1}).
\end{align*}

One may verify that $\delta_{n+1}\circ\delta_{n}=0$, and so $(\Cn{*},\delta_*)$ is a cochain complex with coboundary operator 
$\delta_*$.

Let $\G^{\F_r^{\times n}}$ be the set of functions $\F_r^{\times n}\to \G$.  Let $f_t$ be a curve in $\G^{\F_r^{\times n}}$, and for 
$(\wt_1,...,\wt_n)\in\F_r^{\times n}$ let $\epsilon_{(\wt_1,...,\wt_n)}(f_t)=f_t(\wt_1,...,\wt_n)$ be the evaluation function.  Then for each evaluation, we have a 
curve in $\G$, and so we say $f_t$ is {\it smooth} if and only if it is smooth at all evaluations (see \cite{Ka}).  We define the tangent space at a function $f$ 
to be the vector space of tangents to smooth curves $f_t$ where $f_0=f$.  In other words, $$T_{f}(\G^{\F_r^{\times n}})\cong\frak{g}^{\F_r^{\times 
n}}=\Cn{n},$$ given by 
$$u_{(\wt_1,...,\wt_n)}(f)=\frac{\mathrm{d}}{\mathrm{d}t}\bigg|_{t=0}\!\!\!\!\epsilon_{(\wt_1,...,\wt_n)}(f_t)= 
\frac{\mathrm{d}}{\mathrm{d}t}\bigg|_{t=0}\!\!\!\mathrm{exp}(t\alpha_{(\wt_1,...,\wt_n)})\epsilon_{(\wt_1,...,\wt_n)}f.$$  Since 
a function is determined by its evaluations, we consider right invariant vector fields defined along these coordinates which gives a path with the requisite 
properties.

Let $I$ be a finite subset of $\F_r$, and let $\C[\G^I]$ be the coordinate ring of $\G^I$.  The set of such $I$'s is partially ordered by set inclusion and so 
$\C[\G^J]\hookrightarrow \C[\G^I]$ for $J\subset I$.  We then define $\C[\G^{\F_r}]=\underrightarrow{\lim}\C[\G^I]$.  With this said, we note that $\R$ is the subspace of
$\G^{\F_r}$ that is cut out by the functions $\mathtt{ob}_{\xt,\yt}(f)=f(\xt)f(\yt)f(\xt\yt)^{-1}$.  In other words, $$\C[\R]=\C[\G^{\F_r}]/(\mathtt{ob}_{\xt,\yt}-\id\ :\ 
\xt,\yt\in \F_r),$$ and $\R=\mathrm{Spec}_{max}(\C[\R])$.

Then $T_f(\R)=\{u\in T_f(\G^{\F_r})\ : \ u(f(\xt)f(\yt)f(\xt\yt)^{-1})=0\}$, and so 
\begin{align*}
0&=u(f(\xt)f(\yt)f(\xt\yt)^{-1})\\
&=u(f(\xt))f(\yt)f(\xt\yt)^{-1}+f(\xt)u(f(\yt))f(\xt\yt)^{-1}-f(\xt)f(\yt)f(\xt\yt)^{-1}u(f(\xt\yt))f(\xt\yt)^{-1},
\end{align*}
which implies $u_{\xt\yt}=u_\xt+\mathrm{Ad}_f(\xt)u_\yt.$
 
However, this is exactly the condition for $\delta_1(f)=0$, and so $$T_f(\R)=\mathrm{Ker}(\delta_1)=Z^1(\F_r;\frak{g}_{\mathrm{Ad}_f}).$$

On the other hand, consider a smooth path contained in the orbit $\mathcal{O}_f\subset \R$:   
$$f_t(\xt)=\mathrm{exp}(tu_\xt)f(\xt)=\mathrm{exp}(-tu_0)f(\xt)\mathrm{exp}(tu_0),$$ for
some $u_0\in \frak{g}=\Cn{0}$.  Then
$$u_\xt f(\xt)=\frac{\mathrm{d}}{\mathrm{d}t}\bigg|_{t=0}\!\!\!\!f_t(\xt)=-u_0f(\xt)+f(\xt)u_0,$$ which implies
$u_\xt=\mathrm{Ad}_f(\xt)u_0-u_0$.  However this is exactly the condition $\delta_0(u_0)=u$, so 
$$T_f(\mathcal{O}_f)=\mathrm{Image}(\delta_0)=B^1(\F_r;\frak{g}_{\mathrm{Ad}_f}).$$

Therefore, if $\rho \in \R^{reg}=\R^{s}\subset \R^{ss}$, and $[\rho]=\pi(\rho)\in \X$, then 
$$T_{[\rho]}(\X)\cong 
T_\rho(\R)/T_\rho(\mathcal{O}_\rho)=Z^1(\F_r;\frak{g}_{\mathrm{Ad}_\rho})/B^1(\F_r;\frak{g}_{\mathrm{Ad}_\rho})=H^1(\F_r;\frak{g}_{\mathrm{Ad}_\rho}).$$
 
\section{Homology and Fox Derivatives}

Let $\mathbb{Z}\F_r$ be the integral group ring of $\F_r$ and let $\epsilon:\mathbb{Z}\F_r\to \mathbb{Z}$ be the {\it 
augmentation map} defined by $\sum n_{\wt}\wt\mapsto \sum n_{\wt}$.  Define $\mathbb{Z}$-modules 
$C_0(\F_r)=\mathbb{Z}$ and $C_n(\F_r)=\mathbb{Z}\F_r^{\times n}$, and let $$\partial_{n+1}:C_{n+1}(\F_r)\longrightarrow 
C_{n}(\F_r)$$ be defined by
\begin{align*}
\partial_{n+1}(\wt_1,\wt_2,...,\wt_{n+1})=&\epsilon(\wt_1)(\wt_2,...,\wt_{n+1})+\\
&\sum_{i=1}^{n}(-1)^i(\wt_1,...,\wt_i\wt_{i+1},...,\wt_{n+1})+\\
&(-1)^{n+1}(\wt_1,...,\wt_n)\epsilon(\wt_{n+1}).
\end{align*}

One can show $\partial_{n}\circ\partial_{n+1}=0$ and so $(C_*(\F_r),\partial_*)$ is a chain complex with boundary $\partial_*$.  

In \cite{F} it is shown that the derivations on $\mathbb{Z}\F_r$, $$\mathrm{Der}(\F_r)=\{ D:\mathbb{Z} \F_r \to 
\mathbb{Z} \F_r \ : \  D( \xt \yt ) = D ( \xt ) \epsilon ( \yt ) + \xt D ( \yt ) \},$$ 
are freely generated by the derivations $\frac{\partial}{\partial\xt_i}(\xt_j)=\delta_{ij}$.  Moreover, for every 
$\mathtt{u} \in \mathbb{Z}\F_r$, there is the ``mean value 
theorem'':  
$$\mathtt{u}-\epsilon(\mathtt{u})=\sum_{i=1}^r\frac{\partial\mathtt{u}}{\partial\xt_i}(\xt_i-1).$$  These derivations 
and their generators are called the \emph{Fox derivatives}.

Recall that the peripheral structure on $\X$ is given by the presentation
$$\F_r=\pi_1(S_{n,g},*)=\{\xt_1,\yt_1,...,\xt_g,\yt_g,\bt_1,...,\bt_n\ |\ \rt=1\},$$
where $$\rt=\xt_1\yt_1\xt_1^{-1}\yt_1^{-1}\cdots \xt_g\yt_g\xt_g^{-1}\yt_g^{-1}\bt_1\cdots \bt_n=\Pi[\xt_i,\yt_i]\Pi \bt_j.$$  Then with respect to the Fox derivatives and the 
``mean value theorem,'' $\cite{Ki}$ shows 
\begin{align*}
\partial_2 \Z =&\sum_{j=1}^n \bt_j\ \mathrm{where}\\
\Z=&\sum_{i=1}^g(\frac{\partial \rt}{\partial \xt_i},\xt_i)+(\frac{\partial \rt}{\partial 
\yt_i},\yt_i)+\sum_{j=1}^n(\frac{\partial \rt}{\partial \bt_j},\bt_j).
\end{align*}
 
Consequently, he refers to $\Z$ as the \emph{fundamental relative cycle}, since $$\Z-\sum_{j=1}^n(\frac{\partial \rt}{\partial \bt_j},\bt_j)$$ is the fundamental
cycle when $n=0$, and consequently $[\Z]$ is a generator of $H_2(\F_r,\{\bt_1,...,\bt_n\};\mathbb{Z})\cong \mathbb{Z}$ (see \cite{GHJW}).

\subsection{Parabolic Cocycles}

Let $\F^i_r\subset \F_r$ be the cyclic subgroup generated by the boundary curve $\bt_i$.  Then define 
the set of {\it parabolic cocylces}, $Z^1_{par}(\F_r;\frak{g}_{\mathrm{Ad}_\rho})\subset 
Z^1(\F_r;\frak{g}_{\mathrm{Ad}_\rho})$, by 
$f\in Z^1_{par}(\F_r;\frak{g}_{\mathrm{Ad}_\rho})$ if and only if $f_i=f|_{\F_r^i}\in 
B^1(\F^i_r,\frak{g}_{\mathrm{Ad}_\rho})$ for all $1\leq i\leq n$.

It is shown in \cite{Ki} that $$B^1(\F_r;\frak{g}_{\mathrm{Ad}_\rho})\subset 
Z^1_{par}(\F_r;\frak{g}_{\mathrm{Ad}_\rho}) \subset Z^1(\F_r;\frak{g}_{\mathrm{Ad}_\rho}).$$

So for $[\rho] \in \R^{reg}\aq \G=\X^{reg}$, we have $$H^1_{par}(\F_r;\frak{g}_{\mathrm{Ad}_\rho})=Z^1_{par}(\F_r;\frak{g}_{\mathrm{Ad}_\rho}) 
/B^1(\F_r;\frak{g}_{\mathrm{Ad}_\rho})\ \subset T_{[\rho]}(\X).$$

In other words, $H^1_{par}(\F_r;\frak{g}_{\mathrm{Ad}_\rho})$ is the set of tangents that are zero on the boundary; 
that is, the tangents to representations with constant boundary value.  So the distribution 
$H^1_{par}(\F_r;\frak{g}_{\mathrm{Ad}_\rho})\subset T_{[\rho]}(\X^{reg})$
is exactly the tangents to curves in $\frak{F}\cap \X^{reg}$.  

On the other hand, let $\rho\mapsto \tr{\rho(\wt)}$ for a fixed word $\wt$ be a ``word map.''  The image of 
sufficiently many such maps (necessarily finite) determines $[\rho]$.  Then holding the 
boundary values fixed gives functions in $\C[\frak{F}]\subset \C[\X]$.  As the boundary values are deformed to a different leaf, the word map is likewise deformed.  
Therefore, the word maps generate a family of smooth invariant vector fields that generate the distribution.  

Consequently, the Stephan-Sussmann theorem (see page $17$ 
in \cite{DZ}) implies that $\X^{reg}$ is foliated by $\frak{F}\cap\X^{reg}$.  But since $\X^{reg}$ is an open dense set and all vector 
fields corresponding to the distribution are continuous (in fact, non-singular) they can be extended to a non-singular foliation on
$\mathcal{X}$, with leaves given by $\mathcal{F}=\mathcal{X}\cap \frak{F}$.  Subsequently it can be further extended 
to a singular foliation on $\X$ with leaves $\frak{F}$.

\subsection{Cup and Cap Products}
Given $u\in Z^1_{par}(\F_r,\frak{g}_{\mathrm{Ad}_\rho})$, there exists $u_0(\bt_i)\in C^0(\F_r^i,\frak{g}_{\mathrm{Ad}_\rho})$ so
$$u(\bt)=\delta_0(u_0(\bt_i))(\bt)=\mathrm{Ad}_\rho(\bt)u_0(\bt_i)-u_0(\bt_i)$$ as long as $\bt \in \F_r^i$.  Define $u_0(\wt)\in 
C^1(\F_r,\frak{g}_{\mathrm{Ad}_\rho})$ by 
setting $u_0(\wt)=0$ unless $\wt=\bt_i$, for any $1\leq i\leq n$.  In these cases, let $u_0(\bt_i)$ be a solution on $\F^i_r$ to   
$$\delta_0 \left(u_0\right)=u.$$  Although $u_0(\wt)$ is not unique, \cite{Ki} shows that if $\overline{u}_0$ is 
another such 
solution and $v\in Z^1_{par}(\F_r,\frak{g}_{\mathrm{Ad}_\rho})$ then:  $$\tr{u_0(\yt) v(\yt)}=\tr{\overline{u}_0(\yt) v(\yt)}.$$  Hence we define 
$\cup:Z^1_{par}(\F_r,\frak{g}_{\mathrm{Ad}_\rho})^{\times 2}\longrightarrow 
\mathrm{Hom}(C_2(\F_r),\C)$ 
by $$u\cup v\left(\sum n_{(\xt,\yt)}(\xt,\yt)\right)=\sum n_{(\xt,\yt)}\bigg(\mathrm{tr}\big( 
u(\xt)\mathrm{Ad}_\rho(\xt)v(\yt)\big)-\tr{u_0(\yt) v(\yt)}\bigg).$$ 
Subsequently, we define $\omega:Z^1_{par}(\F_r,\frak{g}_{\mathrm{Ad}_\rho})^{\times 2}\longrightarrow \C,$
by $$\omega(u,v)=\left(u\cup v\right)\cap \Z=u\cup v\ (\Z).$$

In \cite{Ki}, it is shown that $\omega$ is well-defined on $H^1_{par}(\F_r,\frak{g}_{\mathrm{Ad}_\rho})^{\times 2}$.
It is clear that $\omega$ is bilinear; and that for all vector fields $\xi_1$ and $\xi_2$ taking values in $H^1_{par}(\F_r,\frak{g}_{\mathrm{Ad}_\rho})$,
$$\omega(\xi_1,\xi_2): \X^{reg}\to H^1_{par}(\F_r,\frak{g}_{\mathrm{Ad}_\rho})^{\times 2}\to \C,$$ is an element of $\C[\X]$ and thus smooth.

Moreover, \cite{Ki} shows that on $H^1_{par}(\F_r,\frak{g}_{\mathrm{Ad}_\rho})^{\times 2}$, $\omega$ is skew-symmetric and non-degenerate. 
Thus $\omega$ is a smooth non-degenerate $2$-form on $\frak{F}\cap\X^{reg}$.  Thus it makes sense to ask whether it is 
{\it closed}, which \cite{Ki} shows to be true.  

In \cite{GHJW}, it is shown that $\omega$ arises from the following commutative diagram:
$$ \xymatrix{
H^1(S_{n,g},\partial S_{n,g};\gAd) \times H^1(S_{n,g};\gAd) \ar[r]^-\cup   & H^2(S_{n,g},\partial S_{n,g};\gAd \otimes \gAd) \ar[d]^{\mathrm{tr}_*}\\
    &  H^2(S_{n,g},\partial S_{n,g};\C) \ar[d]^{\cap [\Z]}\\
H^1_{par}(S_{n,g};\gAd)\times H^1_{par}(S_{n,g};\gAd)\ar[uu] \ar[r]^-\omega  & H_0(S_{n,g};\C)=\C, }$$
and it is likewise established that $\omega$ is symplectic on $H^1_{par}(\F_r,\frak{g}_{\mathrm{Ad}_\rho})^{\times 2}$.  However, this allows one to verify that 
the Poisson formula derived in \cite{G5} for closed surfaces directly generalizes to punctured surfaces.  

We note that in the above diagram $H^1_{par}(S_{n,g};\gAd)\cong H^1_{par}(\F_r,\frak{g}_{\mathrm{Ad}_\rho})$ arises from 
$j\!\!:\! H^1(S_{n,g},\partial S_{n,g};\gAd) \to H^1(S_{n,g};\gAd)$ as $\mathrm{im}(j)\cong H^1_{par}(\F_r,\frak{g}_{\mathrm{Ad}_\rho}) \cong 
H^1(S_{n,g},\partial 
S_{n,g};\gAd)/\mathrm{ker}(j)$.

Subsequently, it follows that the smooth leaves of $\X$ are symplectic and hence $\mathcal{X}$ is a Poisson manifold.  Moreover, $\X$ 
is a Poisson variety, as defined in Chapter $4$.  The dimension of $\X$ is $8r-8=16(g-1)+8n$, and imposing boundary values provide $2n$ relations (not 
necessarily independent) in $\C[\X]$.  Therefore, the dimension of any $\frak{F}$ is greater than or equal to $16(g-1)+6n$.

\section{Poisson Structure of a Trinion}

Let $S$ be an oriented surface with boundary, $\alpha, \beta \in \pi_1(S)$ (in generic position), $\alpha\cap\beta$ the set of (transverse) double 
point intersections of $\alpha$ and $\beta$, $\epsilon(p,\alpha, \beta)$ the oriented intersection number at 
$p\in \alpha\cap\beta$, and $\alpha_p\in \pi_1(S,p)$ the curve $\alpha$ based at $p$.   Then it is shown in \cite{G5} 
that 
\begin{align}\label{bracket}
\{\mathrm{tr}(\rho(\alpha)),\mathrm{tr}(\rho(\beta))\}&=\sum_{p\in 
\alpha\cap\beta}\epsilon(p,\alpha,\beta)\big((\mathrm{tr}(\rho(\alpha_p\beta_p))-(1/3)\mathrm{tr} 
(\rho(\alpha))\mathrm{tr}(\rho(\beta))\big)
\end{align}
defines a Lie bracket on $\mathbb{C}[\frak{X}]$ that is a derivation; in other words a Poisson bracket.  Moreover, it is the bracket that corresponds to the 
symplectic form $\omega$ on the leaves $\frak{F}$.

\begin{theorem}   
Let $\X$ be the relative character variety of $S=S_{3,0}$.  Then there exists a Poisson bracket on $\C[\X]$, 
where the generators $t_{(\pm i)}$ for $1\leq i\leq 3$ are Casimirs, and that is completely determined by the 
formulae:
\begin{align}
\{t_{(4)},t_{(-4)}\}&=P-2t_{(5)},\label{t4formula}\\
\{t_{(\pm 4)},t_{(5)}\}&=\frac{t_{(5)}\{t_{(\pm 4)},P\}-\{t_{(\pm 4)},Q\}}{\{t_{(-4)},t_{(4)}\}}.\label{t5formula}
\end{align}
\end{theorem}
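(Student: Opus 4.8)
The plan is to exploit the fact, established in the previous section, that the bracket $\eqref{bracket}$ is antisymmetric and a derivation in each argument, so that it is determined by its values on the generating traces $\ti{\pm i}$, $1\leq i\leq 5$. First I would fix the geometry: realize $\F_2=\pi_1(S_{3,0})$ with $\xt_1=\bt_1$, $\xt_2=\bt_2$, and $\bt_3=(\bt_1\bt_2)^{-1}=\xt_2^{-1}\xt_1^{-1}$, so that the three boundary curves carry the traces $\ti{\pm1}=\tr{\rho(\bt_1^{\pm1})}$, $\ti{\pm2}=\tr{\rho(\bt_2^{\pm1})}$, and $\ti{3}=\tr{\rho(\bt_3^{-1})}$, $\ti{-3}=\tr{\rho(\bt_3)}$. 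The remaining generators $\ti{4}=\tr{\rho(\bt_1\bt_2^{-1})}$, $\ti{-4}=\tr{\rho(\bt_1^{-1}\bt_2)}$, and $\ti{5}$ are traces of interior curves.

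Next I would show $\ti{\pm1},\ti{\pm2},\ti{\pm3}$ are Casimirs. Each $\bt_i$ may be freely homotoped into its boundary component and hence made disjoint from any immersed loop $\beta$ representing another generator; the intersection set in $\eqref{bracket}$ is then empty, so $\{\ti{\pm i},\tr{\rho(\beta)}\}=0$ for $i\in\{1,2,3\}$. Since the trace functions generate $\C[\X]$ and the bracket is a derivation, these six traces bracket-commute with everything. Therefore the only generators that are not Casimirs are $\ti{4},\ti{-4},\ti{5}$, together with the dependent $\ti{-5}=P-\ti{5}$, and the whole bracket is determined once $\{\ti{4},\ti{-4}\}$, $\{\ti{4},\ti{5}\}$, and $\{\ti{-4},\ti{5}\}$ are known.

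The core step is $\eqref{t4formula}$. Here $\ti{4}=\tr{\rho(\alpha)}$ with $\alpha=\bt_1\bt_2^{-1}$, and $\ti{-4}=\tr{\rho(\beta)}$ with $\beta=\bt_1^{-1}\bt_2$ (freely homotopic to $\alpha^{-1}$). I would draw $\alpha$ and $\beta$ as immersed loops in generic position on the trinion, enumerate the transverse double points $p\in\alpha\cap\beta$, record the signs $\epsilon(p,\alpha,\beta)$ and the based products $\alpha_p\beta_p$, and translate each $\tr{\rho(\alpha_p\beta_p)}$ into a word in $\xb_1^{\pm1},\xb_2^{\pm1}$. Reducing these via the trace identities of Chapter~2, in particular $\eqref{polyp2}$ and the relation $P=\ti{5}+\ti{-5}$, should collapse the signed sum to $\ti{-5}-\ti{5}=P-2\ti{5}$. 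This is the main obstacle: correctly immersing $\alpha$ and its reverse, bookkeeping the self-intersection contributions, and pushing the traces through the Cayley--Hamilton reductions are where the real work, and all the chances for sign errors, lie.

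Finally I would derive $\eqref{t5formula}$ algebraically. The defining relation $\eqref{idealequ}$, namely $\ti{5}^2-P\ti{5}+Q=0$, holds in $\C[\X]$; applying the derivation $\{\ti{\pm4},\,\cdot\,\}$ and using the Leibniz rule gives
\begin{equation*}
(2\ti{5}-P)\{\ti{\pm4},\ti{5}\}=\ti{5}\{\ti{\pm4},P\}-\{\ti{\pm4},Q\}.
\end{equation*}
Since $\C[\X]$ is an integral domain and $2\ti{5}-P=\{\ti{-4},\ti{4}\}\neq0$ by $\eqref{t4formula}$ (as $\ti{5}\notin R$), I may divide to recover $\eqref{t5formula}$; moreover $P,Q\in R$ force $\{\ti{\pm4},P\}=\dP{\mp4}\{\ti{\pm4},\ti{\mp4}\}$ and $\{\ti{\pm4},Q\}=\dQ{\mp4}\{\ti{\pm4},\ti{\mp4}\}$, so these too are explicit once $\{\ti{4},\ti{-4}\}$ is in hand. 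The Jacobi identity need not be re-verified, since the bracket already arises from the symplectic form $\omega$ on the leaves established earlier, and the formulae above determine it on all generators, hence on all of $\C[\X]$.
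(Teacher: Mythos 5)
Your proposal is correct and follows essentially the same route as the paper: the Casimir property of $\ti{\pm 1},\ti{\pm 2},\ti{\pm 3}$ from disjointness of boundary curves in Goldman's formula \eqref{bracket}, the bracket $\{\ti{4},\ti{-4}\}$ from an intersection computation with immersed representatives, and \eqref{t5formula} by applying the derivation $\{\ti{\pm 4},\cdot\}$ to the defining relation $\ti{5}^2-P\ti{5}+Q=0$ (the paper divides formally where you invoke the domain property, and your chain-rule reduction of $\{\ti{\pm 4},P\}$ and $\{\ti{\pm 4},Q\}$ to partials matches the paper's explicit expressions). The one step you defer as "the main obstacle" is lighter than you anticipate: generic representatives of $\xt_1\xt_2^{-1}$ and $\xt_2\xt_1^{-1}$ meet in exactly two double points with signs $-1$ and $+1$ (so the $1/3$-terms in \eqref{bracket} cancel), and the based products $\xt_2^{-1}\xt_1\xt_2\xt_1^{-1}$ and $\xt_1\xt_2^{-1}\xt_1^{-1}\xt_2$ are cyclic rearrangements of the two commutators, so the signed sum collapses to $-\ti{5}+\ti{-5}=P-2\ti{5}$ by cyclic invariance of trace and $P=\ti{5}+\ti{-5}$ alone, with no Cayley--Hamilton reduction needed.
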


\begin{proof}Formula $\eqref{bracket}$ shows existence of a bracket.  Since a Poisson bracket is a bilinear, 
anti-commutative derivation, it is completely determined once it is formulated on the generators of $\C[\X]$.  

We present the fundamental group of $S=S_{3,0}$ as $$\pi_1(S)=\{\xt_1,\xt_2,\xt_3\ : \ \xt_3 
\xt_2 \xt_1=1\},$$ 
so $\xt_3=\xt_1^{-1}\xt_2^{-1}$.  Hence $\pi_1(S)$ is free of rank $2$.

\begin{figure}[h!]
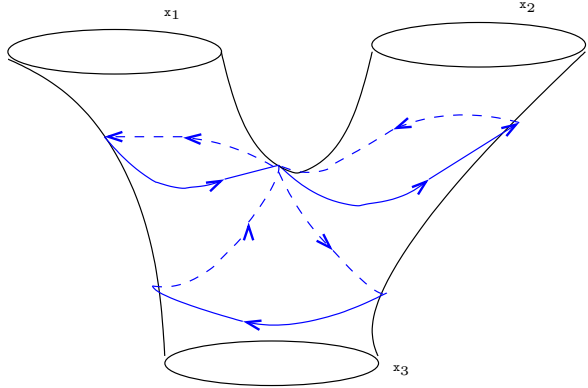

\begin{center}
\include{pantspi1}
\caption{Presentation of $\pi_1(S_{3,0},*)$}\label{pantspi1fig}
\end{center}
\end{figure}

The boundary curves in $S$ are the words $\xt_1$, $\xt_2$, and $(\xt_2\xt_1)^{-1}$, which are disjoint in the 
surface.  The sum in formula $\ref{bracket}$ is taken over intersections, and is well-defined on homotopy 
classes.  So the trace of words corresponding to disjoint curves Poisson commute; that is, they 
are Casimirs.  Hence $t_{(\pm i)}$ are Casimirs, for $1\leq i\leq 3$, since they correspond to traces of boundary curves 
(and their inverses) in $S$.  

Using the derivation property and the identity $t_{(5)}^2-Pt_{(5)}+Q=0$, we deduce:
$$t_{(5)} \{t_{(\pm 4)},P\}+P \{t_{(\pm 4)},t_{(5)}\}-\{t_{(\pm 4)},Q\}=\{t_{(\pm 4)},t_{(5)}^2\}=2t_{(5)}\{t_{(\pm 
4)},t_{(5)}\}.$$  Hence $$(2t_{(5)}-P) \{t_{(\pm 4)},t_{(5)}\}=t_{(5)} \{t_{(\pm 4)},P\}- \{t_{(\pm 4)},Q\}.$$  So $\eqref{t5formula}$ follows from $\eqref{t4formula}$.  

Now assuming $\eqref{t4formula}$ and subsequently using the explicit 
expressions of $P$ and $Q$ given in Chapter $3$, we further derive explicit expressions for $\eqref{t5formula}$ as follows.

\begin{align*}
\{t_{(4)},P\}=&(P-2t_{(5)})(t_{(4)}-t_{(1)}t_{(-2)})\\
\{t_{(4)},Q\}=&(P-2t_{(5)})(-6t_{(4)}+3t_{(-4)}^2-3t_{(-1)}t_{(-3)}-3t_{(2)}t_{(3)}+3t_{(1)}t_{(-2)}+\\
&t_{(1)}t_{(-1)}t_{(4)}+t_{(2)}t_{(-2)}t_{(4)}+t_{(3)}t_{(-3)}t_{(4)}+t_{(-1)}^2t_{(-2)}+t_{(1)}^2t_{(-3)}+\\
&t_{(2)}t_{(-3)}^2+t_{(1)}t_{(2)}^2+t_{(3)}t_{(-2)}^2+t_{(-1)}t_{(3)}^2 
+t_{(-1)}^2t_{(2)}^2-t_{(1)}t_{(-1)}t_{(2)}t_{(3)}-\\
&t_{(-3)}t_{(-2)}t_{(-1)}t_{(2)}-t_{(1)}t_{(2)}t_{(-2)}^2-t_{(-2)}t_{(-1)}t_{(1)}^2+2t_{(1)}t_{(3)}t_{(-4)}+\\
&2t_{(-2)}t_{(-3)}t_{(-4)}-4t_{(-1)}t_{(2)}t_{(-4)})
\end{align*}
\begin{align*}
\{t_{(-4)},P\}=&(2t_{(5)}-P)(t_{(-4)}-t_{(-1)}t_{(2)})\\
\{t_{(-4)},Q\}=&(2t_{(5)}-P)(-6t_{(-4)}+3t_{(4)}^2-3t_{(1)}t_{(3)}-3t_{(-2)}t_{(-3)}+3t_{(-1)}t_{(2)}+\\
&t_{(1)}t_{(-1)}t_{(-4)}+t_{(2)}t_{(-2)}t_{(-4)}+t_{(3)}t_{(-3)}t_{(-4)}+t_{(1)}^2t_{(2)}+t_{(-1)}^2t_{(3)}+\\
&t_{(-2)}t_{(3)}^2+t_{(-1)}t_{(-2)}^2+t_{(-3)}t_{(2)}^2 
+t_{(1)}t_{(-3)}^2+t_{(1)}^2t_{(-2)}^2-t_{(1)}t_{(-1)}t_{(-2)}t_{(-3)}-\\
&t_{(3)}t_{(-2)}t_{(1)}t_{(2)}  -t_{(-1)}t_{(-2)}t_{(2)}^2 -t_{(2)}t_{(1)}t_{(-1)}^2+2t_{(-1)}t_{(-3)}t_{(4)}+\\
&2t_{(2)}t_{(3)}t_{(4)}-4t_{(1)}t_{(-2)}t_{(4)}) 
\end{align*}
and so 
\begin{align*}
\{t_{(4)},t_{(5)}\}=&t_{(4)}\big(t_{(1)}t_{(-1)}+t_{(2)}t_{(-2)}+t_{(3)}t_{(-3)}-t_{(5)}-6\big)+\\
&t_{(-4)}\big(2t_{(1)}t_{(3)}+2t_{(-2)}t_{(-3)}-4t_{(-1)}t_{(2)}\big)+\\
&t_{(5)}t_{(1)}t_{(-2)}+3t_{(-4)}^2-3t_{(-1)}t_{(-3)}-3t_{(2)}t_{(3)}+3t_{(1)}t_{(-2)}+\\
&t_{(-1)}^2t_{(-2)} +t_{(1)}^2t_{(-3)}+t_{(2)}t_{(-3)}^2 +t_{(1)}t_{(2)}^2 +t_{(3)}t_{(-2)}^2+\\
&t_{(-1)}t_{(3)}^2 +t_{(-1)}^2t_{(2)}^2 -t_{(1)}t_{(-1)}t_{(2)}t_{(3)}-t_{(-3)}t_{(-2)}t_{(-1)}t_{(2)} -\\
&t_{(1)}t_{(2)}t_{(-2)}^2 -t_{(-2)}t_{(-1)}t_{(1)}^2
\end{align*}
\begin{align*}
\{t_{(-4)},t_{(5)}\}=&t_{(-4)}\big(t_{(5)}-t_{(-1)}t_{(1)}-t_{(2)}t_{(-2)}-t_{(3)}t_{(-3)}+6\big)+\\
&t_{(4)}\big(4t_{(1)}t_{(-2)}-2t_{(-1)}t_{(-3)}-2t_{(2)}t_{(3)}\big)-\\
&t_{(5)}t_{(-1)}t_{(2)}-3t_{(4)}^2+3t_{(1)}t_{(3)}+3t_{(-2)}t_{(-3)}-3t_{(-1)}t_{(2)}-\\
&t_{(1)}^2t_{(2)} -t_{(-1)}^2t_{(3)}-t_{(-2)}t_{(3)}^2 -t_{(-1)}t_{(-2)}^2 -t_{(-3)}t_{(2)}^2-\\
&t_{(1)}t_{(-3)}^2 -t_{(1)}^2t_{(-2)}^2 +t_{(-1)}t_{(1)}t_{(-2)}t_{(-3)}+t_{(3)}t_{(2)}t_{(1)}t_{(-2)} +\\
&t_{(-1)}t_{(-2)}t_{(2)}^2 +t_{(2)}t_{(1)}t_{(-1)}^2.
\end{align*}

It remains to compute $ \{t_{(4)},t_{(-4)}\}$.  Following the results in \cite{G5}, we consider  
immersed closed curves freely homotopic to $\alpha=\xt_1\xt_2^{-1}$ and $\beta=\xt_2\xt_1^{-1}$ so they intersect 
transversally at double points, and only intersect at double points.  

\begin{figure}[h!]
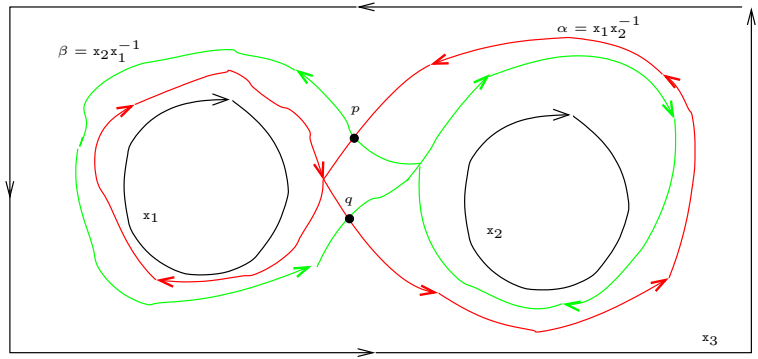

\begin{center}
\include{t4}
\caption{$\alpha$ and $\beta$ in $S$}\label{t4fig}
\end{center}
\end{figure}

Since $S$ is homotopic to a closed rectangle with two open disks removed, we depict all curves as in Figure 
$\ref{t4fig}$. 

We further let $\alpha_p$ and $\beta_p$ be the curves corresponding to $\alpha$ and $\beta$ based at the point $p$ in 
$\pi_1(S,p)$.

\begin{figure}[h!]
\begin{center}
\include{t4p}
\caption{$\alpha_p\beta_p=\xt_2^{-1}\xt_1\xt_2\xt_1^{-1}$}\label{t4pfig}
\end{center}
\end{figure}

Respectively, let $\alpha_q$ and $\beta_q$ be the corresponding curves in $\pi_1(S,q)$.

\begin{figure}[h!]
\begin{center}
\include{t4q}
\caption{$\alpha_q\beta_q=\xt_1\xt_2^{-1}\xt_1^{-1}\xt_2$}\label{t4qfig}
\end{center}
\end{figure}

Calculating the oriented intersection number at $p$ and $q$ we find $\epsilon(p,\alpha,\beta)=-1$ and  
$\epsilon(q,\alpha,\beta)=1.$

\begin{figure}[h!]
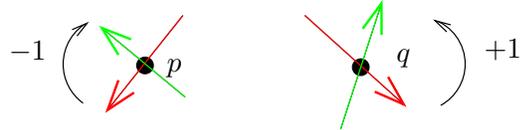

\begin{center}
\include{pqsign}
\caption{Intersection numbers at $p$ and $q$}\label{pqsignfig}
\end{center}
\end{figure}

Hence formula $\eqref{bracket}$ and Figures $\ref{t4pfig}, \ref{t4qfig},$ and $\ref{pqsignfig}$ give
\begin{align*}
\{t_{(4)},t_{(-4)}\}=&\{\mathrm{tr}(\rho(\alpha)),\mathrm{tr}(\rho(\beta))\}\\
=&\epsilon(p,\alpha,\beta)\big(\mathrm{tr}(\rho(\alpha_p\beta_p))-(1/3)\mathrm{tr}
(\rho(\alpha))\mathrm{tr}(\rho(\beta))\big)+\\
&\epsilon(q,\alpha,\beta)\big(\mathrm{tr}(\rho(\alpha_q\beta_q))-(1/3)\mathrm{tr}
(\rho(\alpha))\mathrm{tr}(\rho(\beta))\big)\\
=&-\mathrm{tr}(\rho(\alpha_p\beta_p))+\mathrm{tr}(\rho(\alpha_q\beta_q))\\
=&-\mathrm{tr}(\xb_2^{-1}\xb_1\xb_2\xb_1^{-1})+\mathrm{tr}(\xb_1\xb_2^{-1}\xb_1^{-1}\xb_2)\\
=&-t_{(5)}+t_{(-5)}\\
=&-t_{(5)}+(P-t_{(5)})=P-2t_{(5)}.
\end{align*}\qedhere\end{proof}

\subsubsection{Comment 1}
Formula $\eqref{t5formula}$ can be derived in the same manner as we derived formula 
$\eqref{t4formula}$.  Doing so leads to the expression:
$$\mathrm{tr}(\xb_1\xb_2^{-1}\xb_1^{-1}\xb_2^{-1}\xb_1\xb_2)-
\mathrm{tr}(\xb_1\xb_2^{-1})+\mathrm{tr}(\xb_2^{-2}\xb_1^{2}\xb_2\xb_1^{-1})-
\mathrm{tr}(\xb_2^{-1}\xb_1\xb_2^{-1}\xb_1\xb_2\xb_1^{-1}).$$
Subsequently using the polynomial relations derived in Chapter $2$ to reduce these trace expressions to polynomials in 
$t_{(i)}$ for $1\leq |i|\leq 5$ has provided us with verification of $\eqref{t5formula}$. 

\subsubsection{Comment 2}
For any Poisson bracket, there is a bivector field whose restriction to symplectic leaves gives a symplectic form.  Putting our calculations together and observing the 
symmetry between $\{\ti{4},\ti{5}\}$ and $\{\ti{-4},\ti{5}\}$ allows for a succint expression of the Poisson bivector field in this case.

Let $\frak{a}_{4,5}=\{\ti{4},\ti{5}\}$.  Then the Poisson bivector field, is given by: 
$$(P-2\ti{5})\frac{\partial}{\partial \ti{4}}\wedge\frac{\partial}{\partial \ti{-4}}+\frak{a}_{4,5}\frac{\partial}{\partial \ti{4}}\wedge\frac{\partial}{\partial 
\ti{5}}-\frak{i}(\frak{a}_{4,5})\frac{\partial}{\partial \ti{-4}}\wedge\frac{\partial}{\partial \ti{5}}.$$

\chapter{$\mathbb{RP}^2$-Structures on a Pair-of-Pants}

\section{Flat $\G$-Bundles and $\mathbb{RP}^2$-Structures}
Recall that the conjugation classes of representations in $\R$ whose $\G$-orbits are closed correspond to completely reducible representations and are in bijective 
correspondence with the points of $\X$.  Moreover, we showed that the conjugation classes of irreducible representations are in bijective correspondence with 
$\X^{reg}$.  Points in an (affine) algebraic quotient that have closed orbits are called {\it semi-stable}.  The points which additionally have 
zero-dimensional isotropy are called {\it stable}.  

Every semi-stable representation defines a flat $\G$-bundle over $S_{n,g}$, when $\F_r=\pi_1(S_{n,g},*)$, whose holonomy homomorphism is completely reducible.  This 
follows since given $\rho$, $\F_r$ acts on $\G$.  Thus $$E_\rho=\left(\widetilde{S_{n,g}}\times \G \right)/\F_r\to S_{n,g}$$ is a $\G$-bundle with  
holonomy $\rho$ since the fundamental group acts properly and freely on the universal cover.  We impose the discrete topology on $\G$ so that it is necessary 
flat.

On the other hand, the holonomy of a flat $\G$-bundle over $S_{n,g}$ is a representation $\F_r\to \G$.  However, conjugating the holonomy preserves the isomorphism class of 
bundle.  In other words, $\X$ parameterizes isomorphism classes of flat $\G$-bundles over $S_{n,g}$ with completely reducible holonomy.  We call such bundles 
{\it semi-stable flat $\G$-bundles}.

The group $\G_{\mathbb{R}}=\mathrm{SL}(3,\mathbb{R})$ acts transitively on $X=\mathbb{RP}^2$.  Let $S_{n,g}$ be a surface with boundary, so $\chi<0$.  An 
$(X,\G_{\mathbb{R}})$-atlas is an open cover of $S_{n,g}$ with charts $\{\phi_\alpha:U_\alpha\to X\}$ satisfying:  if $U_\alpha \cap U_\beta$ is non-empty and connected, then 
$\phi_\alpha\circ \phi_\beta^{-1} \in \G_{\mathbb{R}}$.  A maximal $(X,\G_{\mathbb{R}})$-atlas is called an $(X,\G_{\mathbb{R}})$-structure on $S_{n,g}$.  Any 
$(X,\G_{\mathbb{R}})$-manifold has a canonical bundle with fiber $X$ and discrete (coordinate changes are locally constant) structure group $\G_{\mathbb{R}}$, given by piecing 
together $U_\alpha\times X$ by coordinate changes.  Such a bundle is called a flat $(X,\G_{\mathbb{R}})$-bundle.  

We say that an $(X,\G_{\mathbb{R}})$-manifold is {\it convex} if every path is homotopic to a geodesic.  Moreover, we require that boundary components correspond to simple, closed 
geodesics contained in a geodesically convex collar neighborhood in $\mathbb{RP}^2$ whose holonomy has real, distinct, positive eigenvalues.

Under these assumptions, the $(X,\G_{\mathbb{R}})$-bundle has irreducible holonomy.  We say two such structures (with respect to isotopic diffeomorphism classes of the surface) 
are equivalent if they give rise to isomorphic bundles.  It can be thus shown that the moduli space of such 
structures embeds in $\X$ by mapping the structure to the conjugacy class of its holonomy homomorphism (see \cite{G1,G2}).  

\section{Fibration of Convex $\mathbb{RP}^2$-Structures}

Let $\frak{P}(S_{n,g})$ be the moduli space of convex $\mathbb{RP}^2$ structures on $S_{n,g}$, and let $\frak{P}(\partial S_{n,g})$ be the space of germs of 
convex projective 
structures on collar neighborhoods of the boundaries.  Then there is a map 
\begin{equation}\label{fibration}
\frak{P}(S_{n,g})\longrightarrow \frak{P}(\partial S_{n,g}),
\end{equation}
given by restricting the holonomy homomorphism to the boundary components.    

Define the {\it discriminant} $$\mathtt{d}(x,y)=x^2y^2-4(x^3+y^3)+18xy-27,$$ and note that $\mathtt{d}$ is zero if 
and only if there is a repeated root of the characteristic
polynomial $t^3-xt^2+yt-1$.  Let $\frak{P}(\bt_i)\subset \G_{\mathbb{R}}\aq \G_{\mathbb{R}}$ be defined by our boundary condition; that is, conjugacy classes of matrices with real 
distinct positive eigenvalues.  Then for each $1\leq i\leq n$, $\frak{P}(\bt_i)$ is determined by $x=\tr{\mathbf{b}_i}>0$ and $y=\tr{\mathbf{b}_i^{-1}}>0$ and $\mathtt{d}(x,y)>0$. 
It is shown in \cite{G2} that $\frak{P}(\partial S_{n,g})\cong \frak{P}(\bt_1)\times \cdots \times \frak{P}(\bt_n)\cong \mathbb{R}^{2n},$ and $\eqref{fibration}$ is a fibration.

So the foliation $\X\longrightarrow (\G\aq\G)^{\times n}$ restricts to a fibration on the image $\frak{P}(S_{n,g})\to \X$.

A key step in Goldman's proof of $\eqref{fibration}$ is establishing it for the case when the surface is a three-holed 
sphere, or a trinion.  In this case, explicit forms for the boundary matrices are formulated.  Using the resulting 
equations we prove the following theorem.

\begin{theorem}  
Let $\frak{P}$ be the image in $\X$ of the moduli space of convex $\mathbb{RP}^2$-structures on a three-holed 
sphere.  Then $\frak{P}$ is a real $2$-dimensional fibration over a real $6$-dimensional base defined by the 
inequalities for $1\leq i\leq 3$: $t_{(\pm i)}>0$ and $\mathtt{d}(t_{(i)},t_{(-i)})>0,$
and the fiber is determined by expressions for $t_{(\pm 4)}$ in terms of the other six generators and two free positive parameters $s,t$.
\end{theorem}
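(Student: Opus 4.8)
The plan is to combine the fibration structure established by Goldman in \cite{G2} with the explicit trace computations afforded by our coordinates on $\X$. First I would invoke the restriction map $\eqref{fibration}$, $\frak{P}(S_{3,0})\to\frak{P}(\partial S_{3,0})$, together with the embedding of the moduli space into $\X$ by holonomy. Under this embedding the boundary map $\frak{b}=(\frak{b}_1,\frak{b}_2,\frak{b}_3)$ records the conjugacy classes of $\rho(\bt_1),\rho(\bt_2),\rho(\bt_3)$, and since the boundary curves of the trinion are freely homotopic to $\xt_1$, $\xt_2$, and $\xt_1^{-1}\xt_2^{-1}$, with $\tr{\xb_3}=\tr{\xb_1^{-1}\xb_2^{-1}}=\ti{-3}$ and $\tr{\xb_3^{-1}}=\tr{\xb_1\xb_2}=\ti{3}$, these classes are detected precisely by the six Casimir generators $\ti{\pm 1},\ti{\pm 2},\ti{\pm 3}$. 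Thus the base of the fibration sits inside the affine space coordinatized by these six functions; since $\frak{P}(\partial S_{3,0})\cong\mathbb{R}^6$ and the total space has real dimension $8$ by Goldman's deformation theorem, the fibers are two-dimensional.

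Next I would pin down the base exactly. For $A\in\G_{\mathbb{R}}$ the characteristic polynomial is $\lambda^3-\tr{A}\lambda^2+\tr{A^{-1}}\lambda-1$, because $\tr{A^*}=\tr{A^{-1}}$ and $\det A=1$; hence the conjugacy class of a semisimple boundary holonomy is determined by the pair $(\ti{i},\ti{-i})$, and $\mathtt{d}(\ti{i},\ti{-i})$ is the discriminant of this cubic. The convexity requirement that each boundary holonomy have real, distinct, positive eigenvalues then translates into $\mathtt{d}(\ti{i},\ti{-i})>0$ (three distinct real roots), while positivity of the roots, whose product is $1$, is forced by $\ti{i}>0$ and $\ti{-i}>0$: a single positive and two negative real roots would make $\ti{-i}<0$. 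Running this over $1\le i\le 3$ identifies the base with the region cut out by the stated inequalities, diffeomorphic to $\mathbb{R}^6$ as in \cite{G2}.

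The heart of the argument is the fiber computation. Here I would take Goldman's explicit normal forms for the holonomy of a convex structure on the trinion from \cite{G2}: once the boundary data is fixed, $\rho$ is written with $\rho(\xt_1)$ and $\rho(\xt_2)$ in an explicit parametrized form depending on two positive internal parameters $s,t$. Substituting these matrices into $\ti{4}=\tr{\xb_1\xb_2^{-1}}$ and $\ti{-4}=\tr{\xb_1^{-1}\xb_2}$ yields formulas $\ti{\pm 4}=\ti{\pm 4}(\ti{\pm 1},\ti{\pm 2},\ti{\pm 3},s,t)$. Because $\X$ is the hypersurface $\ti{5}^2-P\ti{5}+Q=0$ of Lemma $\ref{ideallemma}$, the remaining coordinate $\ti{5}$ is then determined by this quadratic, with the correct root selected by irreducibility of $\rho$, which places the point off the branching locus $\frak{L}$. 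Consequently every coordinate of a point of $\frak{P}$ is fixed by the six base values together with $\ti{4},\ti{-4}$, and the latter are governed by $s,t$, exactly as the theorem asserts.

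The step I expect to be the main obstacle is verifying that $(s,t)\mapsto(\ti{4},\ti{-4})$ is a genuine parametrization of the fiber, that is, injective with two-dimensional image rather than merely a surjection onto a lower-dimensional set. This amounts to checking that the Jacobian of the trace formulas in $(s,t)$ is generically nonzero over the convex region, or equivalently matching the computation against the known $\mathbb{R}^2$ fiber of Goldman's fibration so that no degeneration occurs. Once this is established, the description of $\frak{P}$ as a real two-dimensional fibration over the real six-dimensional base, with fibers parametrized through $\ti{\pm 4}$ by the positive parameters $s,t$, follows.
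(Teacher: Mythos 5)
Your proposal follows essentially the same route as the paper's proof: restrict Goldman's fibration via the boundary map, cut out the six-dimensional base by the positivity and discriminant inequalities on the Casimirs, obtain the fiber by substituting Goldman's explicit boundary matrices with the two internal positive parameters $s,t$ into $t_{(\pm 4)}$, and let the quadratic $t_{(5)}^2-Pt_{(5)}+Q=0$ account for the remaining coordinate. One small caveat: your side remark that irreducibility of $\rho$ places the point off the branching locus $\frak{L}$ is inaccurate (the paper exhibits smooth, hence irreducible, points in $\frak{L}-\frak{J}$), but nothing essential rests on it, since the paper itself only claims $t_{(5)}$ is \emph{locally} determined by the other variables.
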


\begin{proof}
The foliation map $\frak{b}$ restricts to $\frak{P}$ and so provides the fibration with stated dimensions, since our preceding remarks imply that the following diagram
commutes:  
\begin{displaymath}
\begin{CD}
\X@>\frak{b}>>(\G\aq\G)^{\times n}\\
@AAA      @AAA\\
\frak{P}(S_{3,0}) @>>> \frak{P}(\partial S_{3,0}).\\
\end{CD}
\end{displaymath}

The restriction map is defined by $\tr{\mathbf{x}_i}>0$, $\tr{\mathbf{x}_i^{-1}}>0$ and $\mathtt{d}(\tr{\mathbf{x}_i},\tr{\mathbf{x}_i^{-1}})>0$ for $1\leq i\leq 3$ since these 
are the boundary generators.

However, in the coordinate ring $\C[\X]$, for $1\leq i\leq 3$, this corresponds to:
\begin{align*}
&t_{(\pm i)}>0\ \ \text{and}\ \ \mathtt{d}(t_{(i)},t_{(-i)})>0.
\end{align*}

Since $\ti{5}$ is locally determined by the other variables using $\ti{5}^2-P\ti{5}+Q=0$, and the Casimirs are fixed in a given fiber of the 
boundary map, we are left with only $\ti{\pm 4}$ to determine.  Using {\it Mathematica} we verify that the fiber is given
by explicit equations for these generators in terms of the two positive free parameters $s,t$ given in \cite{G2}.

Let $\lambda_1,\lambda_2,$ and $\lambda_3$ be the largest eigenvalue of $\xb_1, \xb_2$ and $\xb_3$ respectively.  Then:
\begin{align*}
&\lambda_1^3 + \ti{1}\lambda_1^2 - \ti{-1}\lambda_1+ 1 = 0\\
&\lambda_2^3 + \ti{2}\lambda_2^2 - \ti{-2}\lambda_2+ 1 = 0 \\
&\lambda_3^3 + \ti{3}\lambda_3^2 - \ti{-3}\lambda_3+ 1 = 0, 
\end{align*}
and $\lambda_1,\lambda_2,$ and $\lambda_3$ are locally expressed in terms of the Casimirs.  Then using the expressions 
for the boundary matrices given in \cite{G2}, we derive the following formulas for $t_{(\pm 4)}$:

\begin{align*}
\ti{4}=&\frac{1}{s \sqrt{\lambda_1} \sqrt{\lambda_2} \sqrt{\lambda_3} } + \frac{\sqrt{\lambda_1} \sqrt{\lambda_2} \sqrt{\lambda_3} }{s} -
\frac{s \lambda_1^{3/2} \sqrt{\lambda_2} }{\sqrt{\lambda_3}} - \frac{s \lambda_2^{3/2} \sqrt{\lambda_3} }{\sqrt{\lambda_1}}- \\
&\frac{s \sqrt{\lambda_1} \lambda_3^{3/2} }{\sqrt{\lambda_2}} + 2s^2 - \frac{\lambda_1}{t \lambda_2 } - 
\frac{\lambda_3}{t \lambda_1 } + \frac{1}{st \sqrt{\lambda_1} \sqrt{\lambda_2} \sqrt{\lambda_3} } + \\
&\frac{s \sqrt{\lambda_2} }{t \lambda_1^{3/2} \sqrt{\lambda_3} } + \frac{s \sqrt{\lambda_3} }{t \sqrt{\lambda_1} \lambda_2^{3/2} } + 
\frac{s \sqrt{\lambda_1} \lambda_3^{3/2} }{t \sqrt{\lambda_2} } - \frac{s^2}{t} - \frac{s^2 \lambda_3^2 }{t \lambda_1 \lambda_2 } +\\ 
&\frac{s^3 \sqrt{\lambda_3}}{t \lambda_1^{3/2} \sqrt{\lambda_2} } - t \lambda_1 \lambda_2^2  + 
\frac{t \sqrt{\lambda_1} \sqrt{\lambda_2} \sqrt{\lambda_3} }{s} + \frac{st \lambda_1^{3/2} \sqrt{\lambda_2}}{\sqrt{\lambda_3}} +\\ 
&\frac{ \ti{1} }{ \lambda_2 } - \lambda_2^2 \ti{1} + \frac{s \sqrt{\lambda_1} \sqrt{\lambda_2} \ti{1} }{ \sqrt{\lambda_3} } +
\frac{ \ti{1} }{ t \lambda_2 } - \frac{s \lambda_3^{3/2}  \ti{1} }{t \sqrt{\lambda_1} \sqrt{\lambda_2}  } + \\
&\frac{ s^2 \ti{1} }{ t \lambda_1 } + \frac{s \sqrt{\lambda_2} \sqrt{\lambda_3}  \ti{2} }{ \sqrt{\lambda_1} } + 
t \lambda_1 \lambda_2 \ti{2} + \lambda_2 \ti{1} \ti{2} +\frac{ s \sqrt{ \lambda_1} \sqrt{\lambda_3}  \ti{-3} }{ \sqrt{\lambda_2}} +\\ 
&\frac{\ti{-3}}{t \lambda_1 } - \frac{s \sqrt{\lambda_1} \sqrt{\lambda_3} \ti{-3}}{t \sqrt{\lambda_2} } + 
\frac{s^2 \lambda_3 \ti{-3}}{t \lambda_1 \lambda_2 } + \frac{s \sqrt{\lambda_3} \ti{1} \ti{-3}}{t \sqrt{\lambda_1}\sqrt{\lambda_2} }
\end{align*}

\begin{align*}
\ti{-4}=&\frac{2}{s^2} - \frac{\sqrt{\lambda_1}\lambda_2^{3/2}}{s \sqrt{\lambda_3} } - \frac{\lambda_1^{3/2}\sqrt{\lambda_3}}{s \sqrt{\lambda_2}}-
\frac{\sqrt{\lambda_2}\lambda_3^{3/2}}{s \sqrt{\lambda_1}} + \frac{s}{\sqrt{\lambda_1} \sqrt{\lambda_2} \sqrt{\lambda_3} } +s\sqrt{\lambda_1}\sqrt{\lambda_2}\sqrt{\lambda_3}+\\ 
&\frac{ \lambda_2 }{ t \lambda_1} + \frac{ \lambda_3 }{t \lambda_2 } + \frac{\lambda_1 \lambda_3^2 }{t} + \frac{1}{s^2 t} - 
\frac{ \lambda_1^{3/2} \sqrt{\lambda_3}}{st \sqrt{\lambda_2}}- \frac{ \sqrt{ \lambda_2 } \lambda_3^{3/2} }{st \sqrt{\lambda_1}} -\\
&\frac{s \sqrt{\lambda_1} \sqrt{\lambda_2} \sqrt{\lambda_3}}{t} - \frac{s \lambda_3^{5/2}}{t \sqrt{\lambda_1} \sqrt{\lambda_2}} + 
\frac{s^2 \lambda_3}{t \lambda_1} + \frac{t \lambda_1}{\lambda_3} + \frac{t}{s^2} - \frac{t \sqrt{\lambda_1} \lambda_2^{3/2}}{s \sqrt{\lambda_3}} +\\ 
&\frac{\sqrt{\lambda_1} \sqrt{\lambda_3} \ti{1}}{s \sqrt{\lambda_2}} - \frac{\lambda_3^2 \ti{1}}{t} + \frac{\sqrt{\lambda_1} \sqrt{\lambda_3} \ti{1}}{st \sqrt{\lambda_2}} + 
\frac{s \sqrt{\lambda_2} \sqrt{\lambda_3} \ti{1}}{t\sqrt{\lambda_1}} + \frac{\ti{2}}{\lambda_1} - \\
&\lambda_1^2 \ti{2} + \frac{\sqrt{\lambda_1} \sqrt{\lambda_2} \ti{2}}{s \sqrt{\lambda_3}} + \frac{t \sqrt{\lambda_1} \sqrt{\lambda_2} \ti{2}}{s \sqrt{\lambda_3}} + 
\lambda_1 \ti{1} \ti{2} + \frac{\sqrt{\lambda_2}\sqrt{\lambda_3} \ti{-3}}{s \sqrt{\lambda_1}} -\\ 
&\frac{\lambda_1 \lambda_3 \ti{-3}}{t} + \frac{\sqrt{\lambda_2}\sqrt{\lambda_3} \ti{-3}}{st \sqrt{\lambda_1}} +
\frac{s \lambda_3^{3/2} \ti{-3}}{\sqrt{t \lambda_1} \sqrt{\lambda_2}} + \frac{\lambda_3 \ti{1} \ti{-3}}{t}
\end{align*}

Since they are in terms of only the Casimirs and $s,t$, and any conjugacy class of a representation is determined by its values on $\ti{\pm i}$ for $1\leq i\leq 
4$ and $\ti{5}$, it follows that the fibers are completely determined by these equations.
\end{proof}

\end{document}